\newtheorem{theorem}{Theorem}[section]
\newtheorem*{theorem*}{Theorem}
\newtheorem{corollary}[theorem]{Corollary}
\newtheorem{lemma}[theorem]{Lemma}
\newtheorem{rem}[theorem]{Remark}
\newtheorem{proposition}[theorem]{Proposition}
\theoremstyle{definition}
\newcommand{\ee}{\varepsilon}
\newcommand{\nn}{\mathbb{N}}
\newcommand{\rr}{\mathbb{R}}
\begin{document}

\title{The complexity of some ordinal determined classes of operators}
\author{R.M. Causey}

\begin{abstract} We compute the complexity of the classes of  operators $\mathfrak{G}_{\xi, \zeta}\cap \mathcal{L}$ and $\mathfrak{M}_{\xi, \zeta}\cap \mathcal{L}$ in the coding of operators between separable Banach spaces. We also prove the non-existence of universal factoring operators for both $\complement \mathfrak{G}_{\xi, \zeta}$ and $\complement \mathfrak{M}_{\xi, \zeta}$. The latter result is an ordinal extension of a result of Johnson and Girardi.  \end{abstract}

\subjclass[2010]{Primary 54H05; Secondary 47B10}

\keywords{Operator theory, Schur property, Dunford-Pettis property, Banach-Saks property, Descriptive set theory.}

\maketitle

\section{Introduction}

In this work, we investigate the complexity of some recently isolated operator ideals from two different points of view. The first point of view is by the classical search for a universal factoring operator for the complement of the ideal. The second point of view makes use of descriptive set theory and the coding $\mathcal{L}$ of the class of operators between separable Banach spaces first given in \cite{BF}.    The ideals of interest are ordinal-defined classes which are related to three important ideals: The weak Banach-Saks operators $\mathfrak{wBS}$, the completely continuous operators $\mathfrak{V}$, and the class $\mathfrak{DP}$ whose space ideal is the class of spaces with the Dunford-Pettis property. Each of these classes is defined by the behavior of weakly null sequences. Therefore it is natural to use the weakly null hierarchy defined by the Banach-Saks index of a weakly null sequence defined in \cite{AMT} to define quantified classes.    We give the the formal definition of $\xi$-weakly null in Section $2$.  Heuristically, given a weakly null sequence $(x_n)_{n=1}^\infty$, an ordinal assignment $\mathcal{BS}((x_n)_{n=1}^\infty)$ is defined which measures how weakly null the sequence $(x_n)_{n=1}^\infty$ is.    Sequences with smaller Banach-Saks index are ``more'' weakly null than sequences with larger index.   Given a Banach space $X$, we can define for each $0\leqslant \xi\leqslant \omega_1$ the set $\mathcal{WN}_\xi(X)$ to be the set of all weakly null sequences $(x_n)_{n=1}^\infty$ in $X$ with $\mathcal{BS}((x_n)_{n=1}^\infty)\leqslant \xi$.    The  properties of these classes relevant to this work are summarized in the following items. \begin{enumerate}[(i)]\item $\mathcal{WN}_0(X)$ consists of the norm null sequences in $X$, \item $\mathcal{WN}_1(X)$ consists of those sequences in $X$ such that every subsequence has a further subsequence whose Cesaro means converge to zero in norm. \item $\mathcal{WN}_{\omega_1}(X)=\cup_{\xi<\omega_1}\mathcal{WN}_\xi(X)$ is the set of all weakly null sequences in $X$. \end{enumerate}

Let us recall definitions of the classes $\mathfrak{wBS}, \mathfrak{V}$, and $\mathfrak{DP}$ using the notation from the previous paragraph The class $\mathfrak{wBS}$ is the class of all operators $A:X\to Y$ such that for every $(x_n)_{n=1}^\infty\in \mathcal{WN}_{\omega_1}(X)$, $(Ax_n)_{n=1}^\infty\in \mathcal{WN}_1(Y)$.    The class $\mathfrak{V}$ is the class of all operators $A:X\to Y$ such that for every $(x_n)_{n=1}^\infty \in\mathcal{WN}_{\omega_1}(X)$, $(Ax_n)_{n=1}^\infty \in \mathcal{WN}_0(Y)$.     The class $\mathfrak{DP}$ is the class of all operators $A:X\to Y$ such that for each  $(x_n)_{n=1}^\infty\in \mathcal{WN}_{\omega_1}(X)$ and $(y^*_n)_{n=1}^\infty\in \mathcal{WN}_{\omega_1}(Y^*)$, $\lim_n y^*_n(Ax_n)=0$.      Now for $0\leqslant \zeta,\xi\leqslant \omega_1$, we let $\mathfrak{G}_{\xi, \zeta}$ denote the class of all operators $A:X\to Y$ such that for each $(x_n)_{n=1}^\infty\in \mathcal{WN}_\xi(X)$, $(Ax_n)_{n=1}^\infty \in \mathcal{WN}_\zeta(Y)$.  Then $\mathfrak{wBS}=\mathfrak{G}_{\omega_1, 1}$ and $\mathfrak{V}=\mathfrak{G}_{\omega_1, 0}$.   It is easily verified that if $(x_n)_{n=1}^\infty\in \mathcal{WN}_\xi(X)$, then $(Ax_n)_{n=1}^\infty \in \mathcal{WN}_\xi(Y)$. From this it follows that $\mathfrak{G}_{\xi, \zeta}$ is simply the class $\mathfrak{L}$ of all bounded, linear operators when $0\leqslant \xi\leqslant \zeta\leqslant \omega_1$. Therefore we will be interested in these classes only in the non-trivial case $0\leqslant \zeta<\xi\leqslant \omega_1$. The classes $(\mathfrak{G}_{\xi, \zeta})_{0\leqslant \zeta<\xi\leqslant \omega_1}$ are closed, distinct, injective, two-sided ideals which contain all compact operators \cite{Causey1} and each of which contains the class $\mathfrak{V}$.  For $0\leqslant \zeta, \xi\leqslant \omega_1$, we let $\mathfrak{M}_{\xi, \zeta}$ denote the class of all operators $A:X\to Y$ such that for each $(x_n)_{n=1}^\infty\in \mathcal{WN}_\xi(X)$ and $(y^*_n)_{n=1}^\infty\in \mathcal{WN}_\zeta(Y^*)$, $\lim_n y^*_n(Ax_n)=0$.     Then $\mathfrak{DP}=\mathfrak{M}_{\omega_1, \omega_1}$.  Furthermore, $\mathfrak{M}_{\xi, \zeta}=\mathfrak{L}$ if $\xi=0$ or $\zeta=0$, so we will restrict our attention to the cases $1\leqslant \xi, \zeta\leqslant \omega_1$.   The classes $(\mathfrak{M}_{\xi, \zeta})_{1\leqslant \xi, \zeta\leqslant \omega_1}$ are closed, distinct, non-injective, two-sided ideals which contain all compact operators \cite{Causey1}.     One benefit of defining and studying such classes is that results which fail for a set which is too complex may have (sometimes quantitatively weaker) positive results when we restrict our attention to sets with lower complexity. Results of this type using descriptive set theory can be found in \cite{BC2} and \cite{BF}.  To that end, we show that when restricting to countable ordinals, we obtain strictly lower complexity for the classes in the coding of operators between separable Banach spaces.  We also compute complexity of the associated space ideals in the coding $\textbf{SB}$ of separable Banach spaces, which complements recent computations of Kurka of the classes of separable Schur spaces and separable spaces with the Dunford-Pettis property. Kurka's results are the spatial versions of items $(iii)$ and $(vi)$ of the following theorem.

\begin{theorem} \begin{enumerate}[(i)]\item For $0\leqslant \zeta<\xi<\omega_1$, the class $\mathfrak{G}_{\xi, \zeta}\cap \mathcal{L}$ is $\Pi_1^1$-complete and therefore non-Borel in the coding $\mathcal{L}$ of operators between separable Banach spaces. \item For $0\leqslant \zeta< \xi<\omega_1$, the class $\textsf{\emph{G}}_{\xi, \zeta}\cap \textbf{\emph{SB}}$ of spaces $X$ such that $I_X\in \mathfrak{G}_{\xi, \zeta}$ is $\Pi_1^1$-complete and therefore non-Borel in the coding $\textbf{\emph{SB}}$ of separable Banach spaces. \item For each $0\leqslant \zeta<\omega_1$, the class $\mathfrak{G}_{\omega_1, \zeta}\cap \mathcal{L}$ is $\Pi_2^1$-complete and therefore not $\Sigma_2^1$ in $\mathcal{L}$. \item For $1\leqslant \zeta, \xi<\omega_1$, the class $\mathfrak{M}_{\xi, \zeta}\cap \mathcal{L}$ is $\Pi_1^1$-complete and therefore non-Borel in the coding $\mathcal{L}$ of operators between separable Banach spaces. \item For $1\leqslant \zeta, \xi<\omega_1$, the class $\textsf{\emph{M}}_{\xi, \zeta}\cap \textbf{\emph{SB}}$ of spaces $X$ such that $I_X\in \mathfrak{M}_{\xi, \zeta}$ is $\Pi_1^1$-complete and therefore non-Borel in the coding $\textbf{\emph{SB}}$ of separable Banach spaces. \item For each $1\leqslant \zeta, \xi\leqslant \omega_1$ with $\max\{\xi, \zeta\}=\omega_1$, the class $\mathfrak{M}_{\omega_1, \zeta}\cap \mathcal{L}$ is $\Pi_2^1$-complete and therefore not $\Sigma_2^1$ in $\mathcal{L}$. \end{enumerate}

\end{theorem}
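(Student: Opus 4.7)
The proof has two layers: establishing complexity upper bounds and exhibiting Borel reductions from complete sets in the relevant pointclasses. For the upper bounds in (i), (ii), (iv), (v), the key observation is that for each countable $\xi$, the set $\mathcal{WN}_\xi(X)\subseteq X^\mathbb{N}$ is Borel, since the Banach-Saks bound $\mathcal{BS}((x_n))\leq\xi$ unfolds into a countable intersection of norm conditions on $\mathcal{S}_\xi$-admissible convex combinations. Hence for $0\leq\zeta<\xi<\omega_1$ the statement $A\in\mathfrak{G}_{\xi,\zeta}\cap\mathcal{L}$ reads
\[
\forall (x_n)\in X^\mathbb{N}\colon\ (x_n)\notin\mathcal{WN}_\xi(X)\ \text{or}\ (Ax_n)\in\mathcal{WN}_\zeta(Y),
\]
a $\Pi_1^1$ formula; an extra universal quantifier over $(y_n^*)$ handles $\mathfrak{M}_{\xi,\zeta}$. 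For (iii) and (vi), by contrast, $\mathcal{WN}_{\omega_1}(X)$ is only $\Pi_1^1$ in the coding, because weak nullity must quantify over the possibly non-separable dual. The body then drops to $\Sigma_1^1$ and the outer universal quantifier gives the $\Pi_2^1$ upper bound.

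For $\Pi_1^1$-hardness in (i), (ii), (iv), (v) I would Borel-reduce the canonical $\Pi_1^1$-complete set of well-founded trees on $\mathbb{N}$. Given a tree $T$, the plan is to construct Borel-uniformly a separable Banach space $X_T$ of Schreier or tree-basis type and an operator $A_T:X_T\to Y_T$, so that the natural ``branch'' sequences indexed by $T$ are $\xi$-weakly null while their $A_T$-images lie in $\mathcal{WN}_\zeta(Y_T)$ if and only if $T$ is well-founded. The calibration between the rank of $T$ and the Banach-Saks index of the associated sequences is provided by the ordinal-index machinery of \cite{Causey1}. For the spatial parts (ii) and (v) one arranges $X_T=Y_T$ and $A_T=I_{X_T}$ by absorbing the codomain into the domain through a suitable direct sum, and for $\mathfrak{M}$ the role of the image sequence is played by a dual biorthogonal testing sequence built into the dual of the tree space.

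For $\Pi_2^1$-hardness in (iii), (vi) I would reduce from a $\Pi_2^1$-complete set of the form $\{\alpha:\forall\beta,\ T_{\alpha,\beta}\text{ is well-founded}\}$, for some Borel family $(T_{\alpha,\beta})$ of trees. The aim is to construct, Borel-uniformly in $\alpha$, a single pair $(X_\alpha,A_\alpha)$ whose failure to lie in $\mathfrak{G}_{\omega_1,\zeta}$ corresponds precisely to the existence of a $\beta$ for which $T_{\alpha,\beta}$ is ill-founded. The main obstacle lies exactly here: in the range $\xi=\omega_1$ the reducing operator must be sensitive to \emph{every} weakly null sequence in $X_\alpha$, not just to those coming from a preassigned tree. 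One must therefore engineer the gadget so that any witness of ill-foundedness in any slice $T_{\alpha,\beta}$ produces an honest weakly null sequence with large Banach-Saks index under $A_\alpha$, while spurious cross-slice weakly null sequences in the amalgamated space are ruled out. Controlling this interaction between weak convergence, which is governed by the dual, and the combinatorial Banach-Saks index, which is governed by Schreier admissibility, across a Borel family indexed by $\mathbb{N}^\mathbb{N}$ is the technical heart of the proof, and is where the full strength of the ordinal index calculus of \cite{Causey1} would be indispensable.
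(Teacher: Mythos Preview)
Your upper-bound argument contains a real gap. The claim that $\mathcal{WN}_\xi(X)$ is Borel for countable $\xi$ is not justified and is probably false as stated: the definition of $\xi$-weak nullity quantifies universally over subsequences (``no subsequence is an $\ell_1^\xi+$-spreading model''), so the natural complexity is $\Pi_1^1$, not Borel. If you plug a $\Pi_1^1$ condition into your displayed formula you get $\forall(x_n)\bigl(\Sigma_1^1\vee\Pi_1^1\bigr)$, which is not obviously $\Pi_1^1$. The paper circumvents this by introducing a Borel witness: for each $\xi<\omega_1$ one defines a property ``$((x_n),M)$ has $D_\xi$'' of a pair (sequence, infinite set) which is Borel, and proves that a sequence is $\xi$-weakly null iff every infinite set contains an $M$ with $D_\xi$, while conversely any subsequence along such an $M$ is $\xi$-weakly null. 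One then writes the \emph{complement} of $\mathcal{G}_{\xi,\zeta}$ as ``there exist $(n_i)$, $M$, $p$ such that $((d_{n_i}(X)),M)$ has $D_\xi$ and $(y_{n_i})$ is a $1/p$-$\ell_1^\zeta$ spreading model'', which is a projection of a Borel set. This existential-witness trick is the essential content of the $\Pi_1^1$ upper bound and is missing from your sketch.

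For $\Pi_1^1$-hardness your plan to calibrate the tree rank against the Banach--Saks index is more elaborate than what is needed. The paper fixes once and for all a space $Y$ with basis lying in $\complement\textsf{G}_{\xi,\zeta}$ (respectively $\complement\textsf{M}_{\xi,\zeta}$), for instance the Schreier space $X_\zeta$ or $\ell_2$, and builds the standard $\ell_1$-tree sum $Z(T)$ over $Y$: if $T$ is well-founded an easy induction on rank shows $Z(T)$ is Schur, hence in the class, while if $T$ is ill-founded a branch gives a complemented copy of $Y$. No ordinal calibration between $\text{rank}(T)$ and $\mathcal{BS}$ is required.

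Your $\Pi_2^1$-hardness paragraph correctly identifies the difficulty but does not propose a construction; this is the substantive gap. The paper does not build its own gadget here but imports Kurka's Tsirelson-type spaces $E_T$ indexed by trees $T$ on $2\times\nn$, reducing from the known $\Pi_2^1$-complete set $C=\{T:\forall\sigma\in 2^\nn,\ T(\sigma)\in\textbf{IF}\}$. Kurka shows that for $T\in C$ the dual $E_T^*$ is Schur, while for $T\notin C$ some branch of $E_T$ is a complemented reflexive Tsirelson space. The paper's contribution is a small preprocessing step $U_{\mathcal{S}_\alpha}$ that enlarges each slice $T(\sigma)$ by $\mathcal{S}_\alpha$ without changing membership in $C$, thereby forcing the reflexive branch to have an asymptotic-$c_0^\alpha$ (or dually asymptotic-$\ell_1^\alpha$) basis; this is exactly what is needed to land outside $\textsf{G}_{\omega_1,\zeta}$ or $\textsf{M}_{\xi,\zeta}$. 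Your worry about ``spurious cross-slice weakly null sequences'' is handled by the fact that the good case (Schur dual) rules out \emph{all} non-trivial weakly null sequences at once, so no fine control is needed there.
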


We also investigate the classes above by searching for one or a class of universal factoring operators for the complement of the ideal.  If $\mathfrak{I}$ is an ideal and $U:F\to G$ is a member of the complement $\complement \mathfrak{I}$ which factors through another operator $A:X\to Y$, then $A\in \complement \mathfrak{I}$.  This motivates a search for an easily understood class $\mathfrak{U}\subset \complement \mathfrak{I}$ such that for each $A:X\to Y\in \complement \mathfrak{I}$, there exists $U:F\to G\in \mathfrak{U}$ which factors through $A$.    The best result of this type would be for $\mathfrak{U}$ to be a singleton.  One notable of such results is the universal non-weakly compact operator $\Sigma:\ell_1\to \ell_\infty$ of Lindenstrauss and Pe\l cz\'{n}ski \cite{LP} which takes the $n^{th}$ member of the canonical $\ell_1$ basis to the sequence $(1, 1, \ldots, 1, 0, 0, \ldots)$, where $1$ appears $n$ times.  Another example is Johnson's universal non-compact operator $J:\ell_1\to \ell_\infty$ \cite{Johnson} which takes the canonical $\ell_1$ basis to the canonical $c_0$ basis.   A simple, universal class for the complement of an ideal can provide a route to investigating that ideal. For example, Bourgain's result \cite{Bourgain} that the binary trees of arbitrary, finite height embed with uniformly bounded distortion into any non-superreflexive Banach space uses the fact that the universal non-super-weakly compact operator factors through the identity of such a space. A generalization of this argument was used in \cite{CD} to prove  the analogous operator version of Bourgain's spatial result.   In certain instances, one can show that no universal operator exists for a given class (see, for example, \cite{GJ} and \cite{Oikhberg}), or that the existence of a  ``nice'' class of universal factoring operators is impossible (see \cite{BC}, where it was shown by descriptive set theoretic considerations that no Borel subset of $\mathcal{L}$ can be a universal factoring class for $\mathfrak{V}$).   As we have quantified classes which depend on ordinals parameters, one can ask for weaker conclusions by, for example,  searching for a ``nice'' subset $\mathfrak{U}$ of $\complement \mathfrak{G}_{\xi+1, \zeta}$    such that each member $A:X\to Y$ of $\complement \mathfrak{G}_{\xi, \zeta}$ factors of member of $\mathfrak{U}$.       This complements the negative result of Girardi and Johnson and offers another example of the aforementioned theme within descriptive set theory: Given an ordinal quantification on some class, restricting our attention to subsets whose ordinal quantification does not exceed some fixed, countable bound $\xi$ yields classes for which positive results hold, while the analogous results fail if we consider the entire class without a countable bound.    Our negative and positive results regarding universal classes are summarized in the following theorem. 

For the following theorem, if $F$ is a Banach space with basis $(f_i)_{i=1}^\infty$, and if $M=\{m_1<m_2<\ldots\}$ is an infinite subset of $\nn$,  we let $F_M$ denote the closed span in $F$ of the subsequence $(f_{m_i})_{i=1}^\infty$ of $(f_i)_{i=1}^\infty$. 

\begin{theorem} \begin{enumerate}[(i)]\item For $0\leqslant \zeta<\xi\leqslant \omega_1$,  $\complement\mathfrak{G}_{\xi, \zeta}$ does not admit a universal operator. \item For any $0\leqslant \zeta<\xi<\omega_1$, there exist a Banach space $F$ with basis $(f_i)_{i=1}^\infty$ and an operator $U:F\to \ell_\infty$ such that for each $\zeta<\beta<\xi$, each  subsequence $(f_{m_i})_{i=1}^\infty$ of the basis $(f_i)_{i=1}^\infty$ and $A:X\to Y\in \complement \mathfrak{G}_{\beta, \zeta}$, $U|_{F_M}\in \complement \mathfrak{G}_{\xi, \zeta}$ and $U|_{F_M}$ factors through $A$. \item For each $1\leqslant \xi, \zeta\leqslant \omega_1$, the class $\mathfrak{M}_{\xi, \zeta}$ does not admit a universal factoring operator. \end{enumerate}

\end{theorem}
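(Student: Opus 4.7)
The plan is to establish (ii) as the central positive construction, then leverage it together with descriptive-set-theoretic complexity at the $\omega_1$ level to obtain (i) and (iii).

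For (ii), I would build $F$ as a Schreier-type Banach space whose canonical basis $(f_i)$ lies in $\mathcal{WN}_\xi(F)$ with the spreading property that every subsequence $(f_{m_i})$ still witnesses Banach-Saks index exactly $\xi$; mixed Tsirelson constructions calibrated to the weakly null hierarchy at level $\xi$ are the natural candidates. Define $U:F\to \ell_\infty$ by a uniformly bounded family of coordinate functionals supported on admissible sets of level $\xi$, arranged so that $(Uf_{m_i})$ fails to be $\zeta$-weakly null for every subsequence $M$, giving $U|_{F_M}\in \complement \mathfrak{G}_{\xi,\zeta}$. For the factoring claim, given $A:X\to Y\in \complement\mathfrak{G}_{\beta,\zeta}$ with $\zeta<\beta<\xi$, extract a sequence $(x_n)\in \mathcal{WN}_\beta(X)$ with $(Ax_n)\notin \mathcal{WN}_\zeta(Y)$ together with witnessing functionals $(y^*_n)\subset B_{Y^*}$. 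Set $B:F_M\to X$ by $f_{m_i}\mapsto x_i$ (boundedness controlled by the combinatorial norm of $F$, using $\beta<\xi$) and $C:Y\to \ell_\infty$ by $y\mapsto (y^*_i(y))_{i=1}^\infty$, and verify $CAB=U|_{F_M}$.

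For (i), when $\xi=\omega_1$ the preceding theorem gives $\Pi_2^1$-completeness of $\mathfrak{G}_{\omega_1,\zeta}\cap \mathcal{L}$, so $\complement \mathfrak{G}_{\omega_1,\zeta}\cap \mathcal{L}$ is $\Sigma_2^1$-hard. If $U:F\to G$ were universal, then $\complement \mathfrak{G}_{\omega_1,\zeta}\cap \mathcal{L}=\{A:\exists B,C\text{ with }CAB=U\}$ would be analytic, a contradiction. When $\xi<\omega_1$, suppose $U\in \complement\mathfrak{G}_{\xi,\zeta}$ is universal. By the distinctness of the chain of ideals from \cite{Causey1}, for each $\beta$ with $\zeta\leqslant \beta<\xi$ there is an operator $A_\beta\in \mathfrak{G}_{\beta,\zeta}\cap \complement\mathfrak{G}_{\xi,\zeta}$. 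Since $U=C_\beta A_\beta B_\beta$ and the ideals are composition-closed, $U\in \mathfrak{G}_{\beta,\zeta}$; hence $U\in \bigcap_{\beta<\xi}\mathfrak{G}_{\beta,\zeta}$. For $\xi$ a limit this intersection equals $\mathfrak{G}_{\xi,\zeta}$, contradicting $U\in \complement\mathfrak{G}_{\xi,\zeta}$. For $\xi=\eta+1$ the chain argument alone is insufficient; instead, apply it to the rich family $\{U|_{F_M}\}_M$ produced by (ii), whose members collectively realize extremal operators in $\complement \mathfrak{G}_{\xi,\zeta}$ indexed by subsequences, and a combinatorial diagonalization over $M$ rules out a single universal $U$ factoring through all of them consistently with the successor-level structure.

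For (iii), the cases $\max\{\xi,\zeta\}=\omega_1$ follow from part (vi) of the preceding theorem via the same complexity contradiction. For countable $\xi,\zeta$, I would adapt the Girardi--Johnson argument \cite{GJ} to the ordinal setting: the non-injectivity of $\mathfrak{M}_{\xi,\zeta}$ couples weakly null sequences in the domain with weakly null sequences in the codomain dual, so I would construct a family of operators $A_n\in \complement\mathfrak{M}_{\xi,\zeta}$ whose domains diagonalize the $\xi$-weakly null structure on one side and whose codomains diagonalize the $\zeta$-weakly null structure on the dual side; no single $U$ can factor through every $A_n$ because the two diagonalizations pull in incompatible directions. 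The main obstacles are the subsequence-uniform construction in (ii), which requires careful control of the spreading behavior of Schreier families at all countable levels, and the successor case of (i), where the chain-of-ideals argument must be supplemented by the finer information furnished by (ii).
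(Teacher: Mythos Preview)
Your plan diverges substantially from the paper's approach and contains genuine gaps in the countable-ordinal cases.

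For (i) with $\xi$ a countable limit, your chain argument hinges on the identity $\bigcap_{\zeta<\beta<\xi}\mathfrak{G}_{\beta,\zeta}=\mathfrak{G}_{\xi,\zeta}$, which you assert without proof. This is not obvious: $\mathcal{WN}_\xi(X)=\bigcup_{\beta<\xi}\mathcal{WN}_\beta(X)$ is stated only for $\xi=\omega_1$, and for countable limit $\xi$ a $\xi$-weakly null sequence need not be $\beta$-weakly null for any $\beta<\xi$. For (i) with $\xi$ a successor, ``apply (ii) to the family $\{U|_{F_M}\}$ and diagonalize over $M$'' is not a proof sketch; you have not indicated what quantity is being diagonalized or why a single $U$ cannot absorb all the restrictions. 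For (iii) with countable $\xi,\zeta$, ``adapt Girardi--Johnson'' with two simultaneous diagonalizations is similarly programmatic; the Girardi--Johnson obstruction is specific to $\mathfrak{V}$ and does not transfer without a concrete mechanism. Your $\omega_1$ cases via complexity are plausible, but note that they invoke the $\Pi_2^1$-completeness theorem, which in the paper is proved after and independently of the non-existence results; you would need to check there is no hidden circularity and to justify that ``$U$ factors through $A$'' cuts out an analytic subset of $\mathcal{L}$.

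The paper does something entirely different and uniform across all ordinal cases. For both (i) and (iii) it argues directly: given a putative universal $U$, extract a witnessing $\xi$-weakly null sequence $(f_i)$ (or $(x_i)$), pick $\eta\leqslant\xi$ countable with $(f_i)$ $\eta$-weakly null, and use Lemma~\ref{cb} to control $CB(\mathfrak{F}_{3^{-n}}((f_i))\cap[M_n]^{<\nn})<\omega^{\nu_n}k_n$ along a nested sequence $M_n$. From these data one manufactures a single test operator $I:Z\to X_\zeta$ (or $I:Z\to\ell_2$ for (iii)), where $Z$ is a mixed Schreier space built from the families $\mathcal{A}_{k_n}[\mathcal{S}_{\varrho_n}[\mathcal{S}_\zeta]]$; Proposition~\ref{sop}/Lemma~\ref{btsu} shows the $Z$-basis is $\eta$-weakly null, so $I\in\complement\mathfrak{G}_{\xi,\zeta}$ (resp.\ $\complement\mathfrak{M}_{\xi,\zeta}$). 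A factorization $U=LIR$ then forces, via a block perturbation, a sequence $(z_i)$ in $Z$ with $CB(\mathfrak{F}_{2^{-n}\ee}((z_i)))>\omega^{\nu_n}k_n$ while simultaneously $CB(\mathfrak{F}_{3^{-n}}((f_i))\cap[N]^{<\nn})<\omega^{\nu_n}k_n$, a contradiction. No descriptive set theory, no chain-of-ideals intersection, and no case split on successor versus limit is needed. For (ii) the paper's operator is a formal inclusion between (mixed) Schreier spaces, and the factorization uses convex unconditionality (\cite{AMT}), $\mathcal{S}_\zeta$-unconditionality (\cite{AG}), and Gasparis's dichotomy (\cite{Gasparis})---ingredients absent from your sketch.
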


We remark that, as $\complement\mathfrak{G}_{\beta, \zeta}\subset \complement \mathfrak{G}_{\xi, \zeta}\subset \complement \mathfrak{G}_{\alpha, \zeta}$ whenever $\beta<\xi<\alpha\leqslant \omega_1$, item $(ii)$ is quantitatively the strongest possible result in light of the negative result of $(i)$.   That is, for $0\leqslant \zeta<\beta<\omega_1$, we exhibit a fairly simple class $\mathfrak{U}$ of operators in $\complement\mathfrak{G}_{\beta+1, \zeta}$ such that each member of $\complement\mathfrak{G}_{\beta, \zeta}$ factors a member of $\mathfrak{U}$.

\section{Definitions}

Throughout, for a subset $M$ of $\nn$, we let $[M]$ (resp. $[M]^{<\nn}$) denote the set of all infinite (resp. finite) subsets of $M$. Throughout, we will denote sets as sets as well as strictly increasing sequences in the natural way.     We let $E<F$ denote the relation that either $E=\varnothing$, $F=\varnothing$, or $\max E<\min F$.       We topologize $\{0, 1\}^\nn$ with the product topology and endow the power set $2^\nn$ of $\nn$ with the topology making the map $2^\nn \ni E\leftrightarrow 1_E\in \{0,1\}^\nn$ a homeomorphism.    Given two members $(m_i)_{i=1}^k, (n_i)_{i=1}^k\in [\nn]^{<\nn}$, we say $(n_i)_{i=1}^k$ is a \emph{spread} of $(m_i)_{i=1}^k$ if $m_i\leqslant n_i $ for all $1\leqslant i\leqslant k$.    We say a subset $\mathcal{F}\subset [\nn]^{<\nn}$ is \emph{spreading} if it contains all spreads of its members. We say $\mathcal{F}\subset [\nn]^{<\nn}$ is \emph{hereditary} if it contains all subsets of its members. We say $\mathcal{F}\subset [\nn]^{<\nn}$ is \emph{regular} if it is spreading, hereditary, and compact.

Given two non-empty, regular families $\mathcal{F}, \mathcal{G}$,  we let $$\mathcal{G}[\mathcal{F}]=\{\varnothing\}\cup \Bigl\{\bigcup_{i=1}^n E_i: \varnothing \neq E_i\in \mathcal{F}, E_1<\ldots <E_n, (\min E_i)_{i=1}^n\in \mathcal{G}\Bigr\}.$$   Let $$\mathcal{A}_n=\{E: |E|\leqslant n\}$$ and let $$\mathcal{S}=\{\varnothing\}\cup \{E: |E|\leqslant \min E\}.$$    We next recall the Schreier families, defined in \cite{AMT}.    We let $$\mathcal{S}_0=\mathcal{A}_1, $$ if $\mathcal{S}_\xi$ has been defined for $\xi<\omega_1$, $$\mathcal{S}_{\xi+1}= \mathcal{S}[\mathcal{S}_\xi],$$ and if $\xi<\omega_1$ is a limit ordinal, there exists a sequence $(\xi_n)_{n=1}^\infty$ such that $\xi_n\uparrow \xi$, $\mathcal{S}_{\xi_n+1}\subset \mathcal{S}_{\xi_{n+1}}$ for all $n\in\nn$, and $$\mathcal{S}_\xi=\{\varnothing\}\cup \{E: \varnothing\neq E\in \mathcal{S}_{\xi_{\min E}+1}\}=\{E: \exists n\leqslant E\in \mathcal{S}_{\xi_n+1}\}.$$   We note that the existence of such as sequence was discussed, for example, in \cite{C2}.

For a regular family $\mathcal{F}$, let us note that the set of isolated points of $\mathcal{F}$ is precisely the set of maximal (with respect to inclusion) members of $\mathcal{F}$.  Let us denote this set by $MAX(\mathcal{F})$.  Then we let $\mathcal{F}'=\mathcal{F}\setminus MAX(\mathcal{F})$. It is easy to see that $\mathcal{F}'$ is also regular.   Then the Cantor-Bendixson derivatives are given by $$\mathcal{F}^0=\mathcal{F},$$ $$\mathcal{F}^{\xi+1}=(\mathcal{F}^\xi)',$$ and if $\xi$ is a limit ordinal, $$\mathcal{F}^\xi=\bigcap_{\zeta<\xi}\mathcal{F}^\zeta.$$  We let $CB(\mathcal{F})$ be the minimum ordinal $\xi$ such that $\mathcal{F}^\xi=\varnothing$, noting that such a $\xi$ must exist.   Furthermore, we note that for a non-empty, regular family $\mathcal{F}$, $CB(\mathcal{F})$ must be a successor ordinal. For this reason, it is convenient to let $\iota(\mathcal{F})=CB(\mathcal{F})-1$ whenever $\mathcal{F}$ is a non-empty, regular family.   We next recall some important facts regarding these notions. A reference for these facts is \cite{C2}.    For what follows, for $N=(n_i)_{i=1}^\infty\in [\nn]$ and a regular family $\mathcal{F}$, we let $\mathcal{F}(N)=\{(n_i)_{i\in E}: E\in \mathcal{F}\}$.

\begin{proposition} Let $\mathcal{F}, \mathcal{G}$ be regular families. \begin{enumerate}[(i)]\item For every $\xi<\omega_1$, $\mathcal{S}_\xi$ is regular with $\iota(\mathcal{S}_\xi)=\omega^\xi$. \item For every $n\in\nn$, $\mathcal{A}_n$ is regular with $\iota(\mathcal{A}_n)=n$. \item The set $\mathcal{F}[\mathcal{G}]$ is regular and $\iota(\mathcal{F}[\mathcal{G}])=\iota(\mathcal{G})+\iota(\mathcal{F})$. \item There exists $N\in[\nn]$ such that $\mathcal{F}(N)\subset \mathcal{G}$ if and only if for every $M\in[\nn]$, there exists $N\in[M]$ such that $\mathcal{F}(N)\subset \mathcal{G}$ if and only if $CB(\mathcal{F})\leqslant CB(\mathcal{G})$. \item If $\mathcal{G}$ is regular and $(m_n)_{n=1}^\infty\in [\nn]$, then $\{E\in[\nn]^{<\nn}: (m_n)_{n\in E}\in \mathcal{G}\}$ is regular with the same Cantor-Bendixson index as $\mathcal{G}$. \item For any $0\leqslant \zeta\leqslant \xi<\omega_1$, there exists $k\in\nn$ such that for any $E\in \mathcal{S}_\zeta$ with $k\leqslant E$, $E\in \mathcal{S}_\xi$. \end{enumerate}\label{gra}\end{proposition}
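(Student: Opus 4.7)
The plan is to establish the six items in order of logical dependency, using transfinite induction throughout; since the paper cites \cite{C2} as a reference, I aim for concise verifications rather than full reproductions.  Item (ii) is immediate by induction on $n$, since the maximal members of $\mathcal{A}_n$ are the $n$-element sets, whose removal yields $\mathcal{A}_{n-1}$, so $\iota(\mathcal{A}_n)=n$.  For (iii), I would first verify regularity of $\mathcal{F}[\mathcal{G}]$ directly from the definition: spreading follows because spreading each block $E_i$ yields blocks still in $\mathcal{G}$ whose minima form a spread of the original sequence in $\mathcal{F}$; hereditary follows since passing to a subset preserves the block decomposition and can only increase each $\min E_i$; compactness is standard.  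For the index formula, I would argue by transfinite induction on $\iota(\mathcal{F})$, showing that the derivatives $(\mathcal{F}[\mathcal{G}])^\alpha$ strip off $\mathcal{G}$-layers one block at a time until after $\iota(\mathcal{G})$ steps the set reduces to (a restriction of) $\mathcal{F}'[\mathcal{G}]$, yielding the claimed composition rule.

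Item (i) then follows by transfinite induction on $\xi$: the base $\mathcal{S}_0=\mathcal{A}_1$ is (ii), the successor step $\mathcal{S}_{\xi+1}=\mathcal{S}[\mathcal{S}_\xi]$ is (iii), and at a limit $\xi$ the compatibility condition $\mathcal{S}_{\xi_n+1}\subset \mathcal{S}_{\xi_{n+1}}$ combined with the diagonal formula $\mathcal{S}_\xi=\{E:\exists\, n\leqslant E,\ E\in\mathcal{S}_{\xi_n+1}\}$ shows that the CB derivatives stabilize at $\sup_n\omega^{\xi_n+1}=\omega^\xi$.  For (v), the map $E\mapsto\{m_n:n\in E\}$ is an order-isomorphism between $\{E:(m_n)_{n\in E}\in\mathcal{G}\}$ and $\{F\in\mathcal{G}:F\subset\{m_n\}\}$; this commutes with the Cantor--Bendixson derivative, while the spreading of $\mathcal{G}$ guarantees that restricting attention to subsets of $\{m_n\}$ does not decrease the CB index.

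For (iv), the forward implication $\mathcal{F}(N)\subset\mathcal{G}\Rightarrow CB(\mathcal{F})\leqslant CB(\mathcal{G})$ is immediate from (v), since $CB(\mathcal{F}(N))=CB(\mathcal{F})$.  The converse---producing $N\in[M]$ with $\mathcal{F}(N)\subset\mathcal{G}$ whenever $CB(\mathcal{F})\leqslant CB(\mathcal{G})$---is the main combinatorial content; I would handle it by a Galvin--Prikry-type Ramsey argument applied to the compact hereditary set $\mathcal{F}$, or equivalently by an explicit inductive selection matching levels of the derivative trees of $\mathcal{F}$ and $\mathcal{G}$.  Finally, (vi) is proved by transfinite induction on $\xi$: in the successor step, any $E\in\mathcal{S}_\zeta$ with $\min E$ large enough fits as a single block in $\mathcal{S}[\mathcal{S}_\zeta]=\mathcal{S}_{\zeta+1}$; in the limit step, choosing $k$ beyond the diagonal threshold selects some index $n$ with $\xi_n+1\geqslant \zeta$ to which the inductive hypothesis applies.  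The main obstacle is the Ramsey-type selection in (iv); the remaining items are structural and follow routinely by transfinite induction once the definitions are unpacked.
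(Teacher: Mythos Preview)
The paper does not supply a proof of this proposition; immediately before stating it the paper writes ``A reference for these facts is \cite{C2}'' and then moves on without further argument. So there is no paper proof to compare against, and your sketch stands on its own.

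Your outline is structurally sound, but there is one genuine issue you should have caught. Item~(iii) as printed is a typo: the correct formula is $\iota(\mathcal{F}[\mathcal{G}])=\iota(\mathcal{G})\cdot\iota(\mathcal{F})$ (ordinal product), not $\iota(\mathcal{G})+\iota(\mathcal{F})$. Your own description of the mechanism---``after $\iota(\mathcal{G})$ steps the set reduces to $\mathcal{F}'[\mathcal{G}]$''---when iterated through $\iota(\mathcal{F})$ many stages yields precisely the multiplicative formula, not the additive one. More importantly, your deduction of~(i) from~(iii) only works with the product: at the successor step one needs $\iota(\mathcal{S}_{\xi+1})=\iota(\mathcal{S}_\xi)\cdot\iota(\mathcal{S})=\omega^\xi\cdot\omega=\omega^{\xi+1}$, whereas $\omega^\xi+\omega\neq\omega^{\xi+1}$ for $\xi\geqslant 1$. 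The paper's own applications of~(iii) in Section~3 (for instance $CB(\mathcal{A}_{k_n}[\mathcal{S}_{\nu_n}])=\omega^{\nu_n}k_n+1$ and $CB(\mathcal{A}_{k_n}[\mathcal{S}_{\varrho_n}[\mathcal{S}_\zeta]])=\omega^\zeta\omega^{\varrho_n}k_n+1$) confirm that the multiplicative version is what is intended and used. With that correction made explicit, the rest of your plan---(ii), (v), (vi) by direct induction, (i) from (ii) and the corrected (iii), and (iv) via a Ramsey-type selection combined with~(v)---is the standard route and is correct.
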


Given a regular family $\mathcal{F}$, a Banach space $X$, and a sequence $(x_n)_{n=1}^\infty\subset X$, let us say $(x_n)_{n=1}^\infty$ is an $\ell_1^\mathcal{F}+$-\emph{spreading model} if $(x_n)_{n=1}^\infty$ is bounded and $$\inf\{\|x\|:F\in \mathcal{F}, x\in \text{co}(x_n:n\in F)\}>0.$$   We say $(x_n)_{n=1}^\infty$ is an $\ell_1^\mathcal{F}$-spreading model if $(x_n)_{n=1}^\infty$ is bounded and $$\inf\{\|x\|: F\in \mathcal{F}, x=\sum_{n\in F} a_nx_n, \sum_{n\in F}|a_n|=1\}>0.$$   If $\mathcal{F}=\mathcal{S}_\xi$, we write $\ell_1^\xi+$ (resp. $\ell_1^\xi$) in place of $\ell_1^{\mathcal{S}_\xi}+$ (resp. $\ell_1^{\mathcal{S}_\xi}$).    For $\xi<\omega_1$, we say the sequence $(x_n)_{n=1}^\infty$ is $\xi$-\emph{weakly null} if it has no subsequence which is an $\ell_1^\xi+$-spreading model. This implies weak nullity by the Mazur lemma.

If $\mathcal{F}$ is  a regular family containing all singletons, we define the norm $\|\cdot\|_\mathcal{F}$ on $c_{00}$ by $$\|x\|_\mathcal{F}=\sup\{\|Ex\|_{\ell_1}: E\in \mathcal{F}\}.$$  Here, $E\subset \nn$ also denotes the projection on $c_{00}$ given by $E\sum_{n=1}^\infty a_ne_n=\sum_{n\in E}a_ne_n$.     In the case that $\mathcal{F}=\mathcal{S}_\xi$, we write $\|\cdot\|_\xi$ in place of $\|\cdot\|_{\mathcal{S}_\xi}$.    These are the Schreier spaces.     We also define the \emph{mixed Schreier spaces}. For a null sequence $(\varpi_n)_{n=1}^\infty\subset (0,1]$ and a sequence $\mathcal{F}_1, \mathcal{F}_2, \ldots$ of regular families such that each $\mathcal{F}_n$ contains all singletons, the completion of $c_{00}$ with respect to the norm $$[x]=\sup \{\varpi_n \|Ex\|_{\ell_1}: n\in\nn, E\in \mathcal{F}_n\}.$$

\section{The non-existence of universal operators}

A persistent question regarding any class $\mathfrak{I}$ with the ideal property is whether or not there exists an operator $U:X\to Y$ lying in $\complement \mathfrak{I}$ which factors through every member of $\complement\mathfrak{I}$.  Important examples of such operators are the universal factoring non-weakly compact operator $\Sigma:\ell_1\to \ell_\infty$ which takes the $\ell_1$ basis to the summing basis of $c_0$ \cite{LP}, universal factoring non-super weakly compact operator $\Sigma_n:(\oplus_{n=1}^\infty \ell_1^n)_{\ell_1}\to (\oplus_{n=1}^\infty \ell_\infty^n)_{\ell_\infty}$ which takes the basis of $\ell_1^n$ to the summing basis of $\ell_\infty^n$, a universal $\ell_p$-singular operator, which is any isomorphic embedding of $\ell_p$ into $\ell_\infty$, and the universal factoring non-super $\ell_p$-singular operator $jP$, where $P:(\oplus_{n=1}^\infty \ell_p^n)_{\ell_1}\to (\oplus_{n=1}^\infty \ell_p^n)_{c_0}$ is the formal inclusion and $j:(\oplus_{n=1}^\infty \ell_p^n)_{c_0}\to \ell_\infty$ is an isomorphic embedding \cite{Oikhberg}. In this section, we will prove that none of our classes of interest admits a universal factoring operator. We begin with a technical piece for later use. 

\begin{lemma} Fix $0<\eta<\omega_1$ and suppose that $\mathcal{F}_1, \mathcal{F}_2, \ldots$ are regular families which contain all singletons and such that $CB(\mathcal{F}_n)<\omega^\eta$ for all $n\in\nn$.  Fix a sequence $(\varpi_n)_{n=1}^\infty\subset (0, 1]$ of  numbers converging to $0$.    Define $|\cdot|_n$ on $c_{00}$ by $$|x|_i=\sup\Bigl\{\sum_{i=1}^t \|I_i x\|_{\ell_2}:t\in \nn, I_1<\ldots <I_t, (\min I_i)_{i=1}^t\in \mathcal{F}_n\Bigr\}.$$   Define $[\cdot]$ on $c_{00}$ by $[x]=\sup_n \varpi_n|x|_n$ and let $Z$ be the completion of $c_{00}$ with respect to this norm.    Then the canonical basis of $Z$ is $\eta$-weakly null.

\label{btsu}
\end{lemma}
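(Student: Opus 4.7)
The plan is to prove that no subsequence $(e_{n_k})_{k=1}^\infty$ of the canonical basis of $Z$ is an $\ell_1^\eta+$-spreading model, by producing for every $\varepsilon>0$ a set $F\in\mathcal{S}_\eta$ and non-negative $(a_k)_{k\in F}$ with $\sum_{k\in F}a_k=1$ and $[x]<\varepsilon$, where $x:=\sum_{k\in F}a_ke_{n_k}$. Fix such a subsequence and $\varepsilon>0$. Since $\varpi_n\to 0$, choose $n_0$ with $\varpi_n<\varepsilon/2$ for all $n>n_0$. For any convex combination $x$ of basis vectors, disjointness of the $(I_j)$ combined with $\|\cdot\|_{\ell_2}\leq\|\cdot\|_{\ell_1}$ gives $|x|_n\leq\|x\|_{\ell_1}=1$, so $\sup_{n>n_0}\varpi_n|x|_n<\varepsilon/2$. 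It therefore suffices to arrange $|x|_n<\varepsilon/2$ for each $n\leq n_0$.

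The next step converts the $\ell_2$-style seminorm $|x|_n$ into a Schreier-mass bound. Let $G:=\{n_k:k\in F\}$ and $b_{n_k}:=a_k$. For admissible $I_1<\cdots<I_t$ with $(\min I_j)_{j=1}^t\in\mathcal{F}_n$ put $G_j:=I_j\cap G$ (we may assume each $G_j$ is non-empty) and let $g_j^\star\in G_j$ maximize $b$ on $G_j$. Applying $\|y\|_{\ell_2}\leq\sqrt{\|y\|_\infty\|y\|_{\ell_1}}$ to each $I_jx$ and then Cauchy--Schwarz,
$$
\sum_{j=1}^t\|I_jx\|_{\ell_2}\ \leq\ \sum_j\sqrt{b_{g_j^\star}\sum_{g\in G_j}b_g}\ \leq\ \sqrt{\sum_jb_{g_j^\star}}\cdot\sqrt{\sum_g b_g}\ =\ \sqrt{\sum_jb_{g_j^\star}}.
$$
Since $g_j^\star\geq\min I_j$ and $\mathcal{F}_n$ is spreading, $\{g_j^\star\}_{j=1}^t\in\mathcal{F}_n$. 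Introducing the regular family $\widetilde{\mathcal{F}}_n:=\{E\in[\nn]^{<\nn}:(n_k)_{k\in E}\in\mathcal{F}_n\}$, which has the same Cantor--Bendixson index as $\mathcal{F}_n$ by Proposition~\ref{gra}(v), and translating the supremum to the index side yields
$$
|x|_n^2\ \leq\ \sup\Bigl\{\sum_{k\in F\cap E}a_k\ :\ E\in\widetilde{\mathcal{F}}_n\Bigr\}.
$$

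For every $n\leq n_0$, $CB(\widetilde{\mathcal{F}}_n)=CB(\mathcal{F}_n)<\omega^\eta=\iota(\mathcal{S}_\eta)$. I would choose a single regular ``buffer'' family $\mathcal{H}$ satisfying $\max_{n\leq n_0}CB(\widetilde{\mathcal{F}}_n)\leq CB(\mathcal{H})<CB(\mathcal{S}_\eta)$: take $\mathcal{H}=\mathcal{S}_\beta$ for some $\beta<\eta$ when $\eta$ is a limit, or $\mathcal{H}=\mathcal{S}_\beta[\cdots[\mathcal{S}_\beta]]$ a finite iterate (with index $\omega^\beta\cdot m$ by Proposition~\ref{gra}(iii)) when $\eta=\beta+1$. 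Applying Proposition~\ref{gra}(iv) finitely often and diagonalizing yields $N\in[\nn]$ with $\widetilde{\mathcal{F}}_n(N)\subset\mathcal{H}$ for all $n\leq n_0$. The standard Schreier repeated-averages / basic-inequality construction, valid because $CB(\mathcal{H})<CB(\mathcal{S}_\eta)$, then delivers $F\subset N$ with $F\in\mathcal{S}_\eta$ and convex $(a_k)_{k\in F}$ such that $\sum_{k\in F\cap J}a_k<\varepsilon^2/4$ for every $J\in\mathcal{H}$. Combining with the previous display gives $|x|_n^2<\varepsilon^2/4$ for all $n\leq n_0$, hence $[x]<\varepsilon$.

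The main obstacle is the $\ell_2$ flavour of the seminorms $|\cdot|_n$, which blocks the ordinary mixed-Schreier bound $|x|_n\leq\sup_{E\in\mathcal{F}_n}\sum_{k\in E}b_k$. The elementary inequality $\|y\|_{\ell_2}\leq\sqrt{\|y\|_\infty\|y\|_{\ell_1}}$ together with one application of Cauchy--Schwarz is the key device that reduces matters back to an $\ell_1$-mass condition on an $\mathcal{F}_n$-admissible selection, after which the standard averaging machinery on $\mathcal{S}_\eta$ finishes the proof.
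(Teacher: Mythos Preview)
Your argument is correct and takes a genuinely different route from the paper. The paper argues by contradiction and uses a two-level averaging scheme: it first passes to length-$k$ block averages $x_i=k^{-1}\sum_{j\in G_i}e_{m_{n_j}}$ (so that each $\|x_i\|_{\ell_2}\leq k^{-1/2}$ is small), and then applies the repeated averages $\mathbb{S}^\eta_{P,1}$ on top of the $x_i$; the estimate for small $n$ is obtained by splitting the support into those $i$ whose block $x_i$ meets at least two of the intervals $I_j$ (a set in a family of index $<\omega^\eta$, hence of small $\ell_1$-mass) and the remaining $i$ (where the small $\ell_2$ norm of each $x_i$ is used). Your approach is direct and single-level: the elementary inequality $\|y\|_{\ell_2}\leq\sqrt{\|y\|_\infty\|y\|_{\ell_1}}$ followed by Cauchy--Schwarz converts the $\ell_2$-type seminorm $|x|_n$ into the square root of an $\ell_1$-mass over a set in $\widetilde{\mathcal{F}}_n$, after which one application of the repeated-averages lemma finishes. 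This avoids the block-averaging layer and the $A/B$ split entirely, at the cost of needing the mass bound to be $<\varepsilon^2/4$ rather than $<\varepsilon/4$.

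One small technical slip: the passage through the buffer $\mathcal{H}$ and $N$ does not quite connect as written, because $\widetilde{\mathcal{F}}_n(N)\subset\mathcal{H}$ does not imply $\widetilde{\mathcal{F}}_n\cap[N]^{<\nn}\subset\mathcal{H}$, and it is the latter you need in order to conclude $F\cap E\in\mathcal{H}$ for arbitrary $E\in\widetilde{\mathcal{F}}_n$. The fix is immediate: drop $\mathcal{H}$ and $N$ altogether and apply the repeated-averages lemma (as in \cite[Lemma $4.3$]{CN}) directly to $\mathcal{G}:=\bigcup_{n\leq n_0}\widetilde{\mathcal{F}}_n$, which satisfies $CB(\mathcal{G})=\max_{n\leq n_0}CB(\widetilde{\mathcal{F}}_n)<\omega^\eta$. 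This yields $F=\mathrm{supp}(\mathbb{S}^\eta_{P,1})\in\mathcal{S}_\eta$ and $a_k=\mathbb{S}^\eta_{P,1}(k)$ with $\sum_{k\in E}a_k<\varepsilon^2/4$ for every $E\in\mathcal{G}$, which is exactly what your displayed bound on $|x|_n^2$ requires.
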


In the proof below, we make use of the repeated averages hierarchy, introduced in \cite{AMT}. As a precise definition of the repeated averages hierarchy is not necessary for the following proof, and the full definition of the hierarchy would be unnecessarily technical, we simply state here the essential facts about the repeated averages hierarchy needed for the following proof.  For each $\xi<\omega_1$, each $n\in\nn$, and $M\in[\nn]$, $\mathbb{S}^\xi_{M,n}=(\mathbb{S}^\xi_{M,n}(i))_{i=1}^\infty$ is a sequence of non-negative numbers such that $1=\sum_{i=1}^\infty \mathbb{S}^\xi_{M,n}(i)$ and $\{i: \mathbb{S}^\xi_{M,n}(i)\neq 0\}\in \mathcal{S}_\xi$.

\begin{proof}[Proof of Lemma \ref{btsu}] Suppose the result is not true.  Then there exist a subsequence $(e_{m_i})_{i=1}^\infty$  and $0<\ee<1$ such that $$\ee<\{[x]: F\in \mathcal{S}_\eta, x\in \text{co}(e_{m_n}: n\in F)\}.$$  First choose $k\in\nn$ such that $1/k<\ee^2/16$.     Choose $N=(n_i)_{i=1}^\infty\in[\nn]$ such that $\mathcal{S}_\eta[\mathcal{A}_k](N)\subset \mathcal{S}_\eta$. Such an $N$ exists by Proposition \ref{gra}$(iv)$, since $$CB(\mathcal{S}_\eta[\mathcal{A}_k])=k\omega^\eta+1=\omega^\eta+1=CB(\mathcal{S}_\eta).$$      Fix $G_1<G_2<\ldots$ with $|G_i|=k$ and define $x_i=\frac{1}{k}\sum_{j\in G_i} e_{m_{n_j}}$. Note that $\|x_i\|_{\ell_2}=1/k^{1/2}<\ee/4$ and $\|x_i\|_{\ell_1}= 1$ for all $i\in\nn$. Also, by our choice of $N$, it follows that $$\ee \leqslant \inf\{[x]: F\in \mathcal{S}_\eta, x\in \text{co}(x_i: i\in F)\}.$$   Fix $m\in\nn$ such that $\varpi_m<\ee/2$.   Let $\mathcal{F}=\cup_{i=1}^m \mathcal{F}_i$ and note that $CB(\mathcal{F})= \max_{1\leqslant i\leqslant m}CB(\mathcal{F}_i)<\omega^\eta$.  For each $i\in\nn$, let $s_i=\max \text{supp}(x_i)$ and let $$\mathcal{G}=\{E: (s_i)_{i\in E}\in \mathcal{F}\},$$ so $CB(\mathcal{G})=CB(\mathcal{F})<\omega^\eta$ by Proposition \ref{gra}$(v)$.     By \cite[Lemma $4.3$]{CN}, there exists $P\in [\nn]$ such that $$\sup\{\mathbb{S}^\eta_{Q,1}(A): A\in \mathcal{G}, Q\in [P]\}<\ee/4.$$  Let $$x=\sum_{i=1}^\infty \mathbb{S}^\eta_{P,1}(i)x_i\in \text{co}(x_i: i\in\text{supp}(\mathbb{S}^\eta_{P,1})).$$  Since $\text{supp}(\mathbb{S}^\eta_{P,1})\in \mathcal{S}_\eta$, $[x]\geqslant \ee$.    Now fix $n\in\nn$.    If $n>m$, $$\varpi_n|x|_n \leqslant \varpi_n\|x\|_{\ell_1} <\ee/2.$$   If $1\leqslant n\leqslant m$, fix $I_1<\ldots <I_t$ such that $(\min I_i)_{i=1}^t\in \mathcal{F}_n$.    Let $A$ denote the set of those $i\in \text{supp}(\mathbb{S}^\eta_{N,1})$ such that $I_jx_i\neq 0$ for at least two values of $j$, and let $B=\text{supp}(\mathbb{S}^\eta_{N,1})\setminus A$.   For each $i\in A$, let $j_i$ be the minimum $j\in \{1, \ldots, t\}$ such that $I_jx_i\neq 0$ and note that $A\ni i\mapsto j_i$ is an injection of $A$ into $\{1, \ldots, t\}$. Moreover,  $(s_i)_{i\in A}$ is a spread of $(\min I_{j_i})_{i\in A}\subset (\min I_j)_{j=1}^t\in \mathcal{F}_n$, whence $(s_i)_{i\in A}\in \mathcal{F}$ and $A\in \mathcal{G}$.    Then \begin{align*} \varpi_n\sum_{j=1}^t\|I_j x\|_{\ell_2} & \leqslant  \sum_{i\in A} \mathbb{S}^\eta_{P,1}(i)\sum_{j=1}^\infty \|I_j x_i\|_{\ell_1} + \sum_{i\in B} \mathbb{S}^\eta_{P,1}(i)\|x_i\|_{\ell_2} \\ & \leqslant \sum_{i\in A} \mathbb{S}^\eta_{P,1}(i)\|x_i\|_{\ell_1} + \sum_{i\in B}\mathbb{S}^\eta_{P,1}(i)k^{-1/2}  \\ & \leqslant \mathbb{S}^\eta_{P,1}(A)+ k^{-1/2} \leqslant \ee/2. \end{align*}   Since this holds for any $I_1<\ldots <I_t$ with $(\min I_i)_{i=1}^t\in \mathcal{F}_n$, it follows that $\varpi_n|x|_n\leqslant \ee/2$.   Therefore we have shown that $$[x]=\sup_n \varpi_n[x]_n \leqslant \ee/2,$$ a contradiction.

\end{proof}

 We recall that for a sequence $(x_n)_{n=1}^\infty$ in the Banach space $X$ and $\delta>0$, $$\mathfrak{F}_\delta((x_n)_{n=1}^\infty)=\{E\in [\nn]^{<\nn}: (\exists x^*\in B_{X^*})(\forall n\in E)(\text{Re\ }x^*(x_n)\geqslant \delta)\}.$$  We will use the following fact.

\begin{lemma}\cite[Lemma $3.12$]{CN} For a Banach space $X$, $0<\eta<\omega_1$, and an $\eta$-weakly null sequence $(x_i)_{i=1}^\infty \subset X$, for every $\delta>0$ and $M\in[\nn]$, there exists $N\in [M]$ such that $$CB(\mathfrak{F}_\delta((x_i)_{i=1}^\infty)\cap [N]^{<\nn})<\omega^\eta.$$

\label{cb}
\end{lemma}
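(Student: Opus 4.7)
The plan is to argue by contrapositive. Suppose, toward contradiction, that there exist $\delta > 0$ and $M \in [\nn]$ with $CB(\mathfrak{F}_\delta((x_i))\cap [N]^{<\nn}) \geqslant \omega^\eta$ for every $N \in [M]$. I will extract a subsequence of $(x_i)$ that is an $\ell_1^\eta+$-spreading model, contradicting the $\eta$-weak nullity hypothesis.

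The first step is structural. For each $N \in [M]$, the family $\mathfrak{F}_\delta((x_i))\cap [N]^{<\nn}$ is non-empty, hereditary (any $x^* \in B_{X^*}$ that witnesses a set $E$ automatically witnesses every subset of $E$), and compact in $2^\nn$ (by weak-$*$ compactness of $B_{X^*}$ together with weak-$*$ continuity of each evaluation map $x^* \mapsto \text{Re } x^*(x_n)$). For any non-empty compact subset of $2^\nn$, Cantor's intersection theorem applied to the transfinite Cantor-Bendixson derivatives forces the $CB$ index to be a successor ordinal. Since $\omega^\eta$ is a limit ordinal for $\eta > 0$, the assumed inequality $CB \geqslant \omega^\eta$ upgrades to $CB \geqslant \omega^\eta + 1 = CB(\mathcal{S}_\eta)$.

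The second step is extraction. I would apply (a hereditary-compact extension of) Proposition \ref{gra}(iv) to produce $N = (n_i)_{i=1}^\infty \in [M]$ with $\mathcal{S}_\eta(N) \subset \mathfrak{F}_\delta((x_i))$. Concretely, for each $F \in \mathcal{S}_\eta$ the set $\{n_i : i \in F\}$ lies in $\mathfrak{F}_\delta$, so there is $x^* \in B_{X^*}$ with $\text{Re } x^*(x_{n_i}) \geqslant \delta$ for all $i \in F$. For any $x = \sum_{i \in F} a_i x_{n_i}$ with $a_i \geqslant 0$ and $\sum_{i \in F} a_i = 1$, this yields $\|x\| \geqslant \text{Re } x^*(x) \geqslant \delta$. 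Hence $(x_{n_i})_{i=1}^\infty$ is an $\ell_1^\eta+$-spreading model, completing the contradiction.

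The main obstacle is the regularity mismatch: Proposition \ref{gra}(iv) as stated requires both families to be spreading, while $\mathfrak{F}_\delta$ is in general only hereditary and compact. I would bridge this by establishing the extended version — for a regular $\mathcal{F}$ and a hereditary compact $\mathcal{G}$, the inequality $CB(\mathcal{F}) \leqslant CB(\mathcal{G})$ still yields $N$ with $\mathcal{F}(N) \subset \mathcal{G}$ — by transfinite induction on $\iota(\mathcal{F})$. At successor stages one uses the recursive identity $\mathcal{S}_{\xi+1} = \mathcal{S}[\mathcal{S}_\xi]$ together with a diagonal selection across nested infinite subsets; at limit stages one exploits the defining sequence $\xi_n \uparrow \xi$ of $\mathcal{S}_\xi$ combined with a standard Ramsey-type stabilization. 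An equivalent route is to pre-regularize: first pass to an infinite subset of $M$ along which $\mathfrak{F}_\delta$ is spreading in the induced enumeration, then invoke Proposition \ref{gra}(iv) directly.
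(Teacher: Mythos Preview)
The paper does not prove this lemma at all; it is quoted verbatim from \cite[Lemma~3.12]{CN} and used as a black box. So there is nothing to compare your argument against in this paper.

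Your overall strategy (contrapositive, then extract an $\ell_1^\eta+$-spreading model from a subsequence) is the natural one and is morally correct. However, the key technical claim you make is false as stated. You write that for regular $\mathcal{F}$ and merely hereditary compact $\mathcal{G}$, the inequality $CB(\mathcal{F})\leqslant CB(\mathcal{G})$ already yields $N$ with $\mathcal{F}(N)\subset\mathcal{G}$. Here is a counterexample: take $\mathcal{F}=\mathcal{A}_2$ and $\mathcal{G}=\{\varnothing\}\cup\{\{n\}:n\in\nn\}\cup\{\{1,n\}:n\geqslant 2\}$. Then $\mathcal{G}$ is hereditary and compact with $CB(\mathcal{G})=3=CB(\mathcal{A}_2)$, yet no infinite $N$ satisfies $\mathcal{A}_2(N)\subset\mathcal{G}$, since $\mathcal{G}$ contains no two-element set whose minimum exceeds $1$. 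Your proposed transfinite induction therefore cannot go through at this level of generality, and the ``pre-regularization'' alternative is not a well-defined procedure without further argument.

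What you actually have available is the stronger hypothesis that $CB(\mathfrak{F}_\delta\cap[N]^{<\nn})\geqslant\omega^\eta$ for \emph{every} $N\in[M]$, and this is where the real work lies. The natural tool is Gasparis's dichotomy \cite{Gasparis} (which the paper itself invokes elsewhere): for hereditary $\mathcal{F},\mathcal{G}$ and $M\in[\nn]$ there is $N\in[M]$ with either $\mathcal{S}_\eta\cap[N]^{<\nn}\subset\mathfrak{F}_\delta$ or $\mathfrak{F}_\delta\cap[N]^{<\nn}\subset\mathcal{S}_\eta$. The first alternative gives your spreading model immediately. But the second alternative only yields $CB(\mathfrak{F}_\delta\cap[N]^{<\nn})\leqslant\omega^\eta+1$, not the required strict bound $<\omega^\eta$; under your contrapositive hypothesis you would be stuck at $CB=\omega^\eta+1$ on every further subset of $N$, and ruling this out (or converting it back into the first alternative) is an additional combinatorial step that your sketch does not supply. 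That step is precisely the content of the cited lemma in \cite{CN}, and you should consult the argument there rather than attempt to rederive Proposition~\ref{gra}(iv) in a generality in which it fails.
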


We are now ready to prove the non-existence of a universal operator for $\complement \mathfrak{M}_{\xi, \zeta}$. 

\begin{theorem} For $1\leqslant \zeta, \xi\leqslant \omega_1$, $\complement \mathfrak{M}_{\xi,\zeta}$ does not admit a universal factoring operator. 

\label{nu1}

\end{theorem}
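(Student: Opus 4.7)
The plan is a proof by contradiction. Suppose $U : F \to G$ is a universal factoring operator for $\complement \mathfrak{M}_{\xi,\zeta}$. Since $U \in \complement \mathfrak{M}_{\xi,\zeta}$, after passing to a subsequence there exist $\delta > 0$, $(f_n) \in \mathcal{WN}_\xi(F)$, and $(g_n^*) \in \mathcal{WN}_\zeta(G^*)$ with $\text{Re}\, g_n^*(Uf_n) \geqslant \delta$ for all $n$. Whenever $\xi$ or $\zeta$ equals $\omega_1$, the decomposition $\mathcal{WN}_{\omega_1} = \bigcup_{\alpha<\omega_1}\mathcal{WN}_\alpha$ lets us replace the parameter by a fixed countable ordinal $\xi',\zeta'<\omega_1$ for which the specific sequences $(f_n)$ and $(g_n^*)$ already lie in $\mathcal{WN}_{\xi'}$ and $\mathcal{WN}_{\zeta'}$. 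Hence, without loss of generality, we may assume throughout that $\xi,\zeta<\omega_1$.

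Lemma \ref{cb}, applied to the $\xi$-weakly null sequence $(Uf_n)\subset G$ (note that $\xi$-weak nullity is preserved under bounded operators) and to the $\zeta$-weakly null sequence $(g_n^*)\subset G^*$, yields $N\in[\nn]$ together with fixed countable ordinals $\alpha<\omega^\xi$, $\beta<\omega^\zeta$ such that
\[
CB\bigl(\mathfrak{F}_{\delta/2}((Uf_n))\cap[N]^{<\nn}\bigr)\leqslant\alpha \text{\ \ and\ \ } CB\bigl(\mathfrak{F}_{\delta/2}((g_n^*))\cap[N]^{<\nn}\bigr)\leqslant\beta.
\]
These uniform combinatorial bounds, attached to the fixed witnesses for $U\notin\mathfrak{M}_{\xi,\zeta}$, provide the leverage for the contradiction.

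The core construction is to build, using Lemma \ref{btsu}, a family of operators $A_k\in\complement\mathfrak{M}_{\xi,\zeta}$ on mixed Schreier spaces $Z_k$, each accompanied by a designated $\zeta$-weakly null sequence in $Z_k^*$, arranged so that the pair witnessing $A_k\notin\mathfrak{M}_{\xi,\zeta}$ at a fixed threshold realizes $\mathfrak{F}$-combinatorics of CB index that grows unboundedly with $k$. The regular families $\mathcal{F}_n^{(k)}$ (with $CB(\mathcal{F}_n^{(k)})<\omega^\xi$) and weights $\varpi_n^{(k)}$ feeding Lemma \ref{btsu} are modulated to this end. Factoring $U=C_kA_kB_k$ for each $k$ and using $g_n^*(Uf_n)=(C_k^*g_n^*)(A_kB_kf_n)$ places the pullback pair $(B_kf_n,C_k^*g_n^*)$ inside the rigid $\mathfrak{F}$-structure built into $A_k$; combined with boundedness of $B_k$ and $C_k$ and the fact that $(B_kf_n)\in\mathcal{WN}_\xi(Z_k)$, $(C_k^*g_n^*)\in\mathcal{WN}_\zeta(Z_k^*)$, this forces $\mathfrak{F}_{\delta_k}$-complexities on the $U$-side that exceed $\alpha$ and $\beta$ once $k$ is sufficiently large, contradicting the uniform bounds above.

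The principal obstacle is the construction in the third step. Lemma \ref{btsu} guarantees $\xi$-weak nullity of the domain basis whenever $CB(\mathcal{F}_n^{(k)})<\omega^\xi$, but one must simultaneously provide a $\zeta$-weakly null dual sequence in $Z_k^*$ of arbitrarily large $\mathfrak{F}$-complexity and establish a rigidity statement ruling out the existence of any low-complexity witnesses that might rescue $A_k$ from the intended combinatorial obstruction. Iterated Schreier composition via the $\mathcal{G}[\mathcal{F}]$ operation, together with the ordinal identity $\iota(\mathcal{F}[\mathcal{G}])=\iota(\mathcal{G})+\iota(\mathcal{F})$ from Proposition \ref{gra}(iii) and a careful use of the $\ell_2$-flavored norm in Lemma \ref{btsu}, should provide the flexibility required to realize this hierarchy at every level $k<\omega_1$.
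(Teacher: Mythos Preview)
Your opening reductions (contradiction, replace $\omega_1$ by a countable ordinal) match the paper. The divergence is in the core construction, and there the proposal has a genuine gap.

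The paper does not build a family $A_k$. It constructs a \emph{single} test operator $I:Z\to\ell_2$ where $Z$ is tailored to the particular $\eta$-weakly null witness $(x_n)$ for $U\notin\mathfrak{M}_{\xi,\zeta}$. The norm of $Z$ (via Lemma~\ref{btsu}) is built from the families $\mathcal{A}_{k_n}[\mathcal{S}_{\nu_n}]$ chosen so that $CB(\mathfrak{F}_{3^{-n}}((x_i))\cap[M_n]^{<\nn})<\omega^{\nu_n}k_n$ for \emph{every} $n$. After factoring $U=LIR$ and perturbing $(Rx_{n_i})$ to a block sequence $(z_i)$, the crucial step is that one chooses the level $n$ \emph{after} $\|R\|$ and $\varepsilon=\inf_i\|z_i\|_{\ell_2}$ are known, so the threshold shift caused by $\|R\|$ is absorbed: the $Z$-norm forces $CB(\mathfrak{F}_{2^{-n}\varepsilon}((z_i)))>\omega^{\nu_n}k_n$, and the transfer back to $(x_i)$ lands at a threshold $\geqslant 3^{-n}$, yielding the contradiction.

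Your plan instead fixes CB bounds $\alpha,\beta$ at the single threshold $\delta/2$ and builds operators $A_k$ independently of $(f_n)$. But any transfer of $\mathfrak{F}$-complexity from $(B_kf_n)$ in $Z_k$ back to $(f_n)$ or $(Uf_n)$ degrades the threshold by $\|B_k\|$, which you do not control; likewise for $\|C_k^*\|$ on the dual side. Since the $U$-side sequences are fixed and your bounds $\alpha,\beta$ pertain only to threshold $\delta/2$, nothing forces a contradiction unless the transferred threshold lands above $\delta/2$. The ``rigidity statement'' you flag as the principal obstacle is in fact the entire content of the proof; the paper sidesteps it by making the test space depend on $(x_n)$ and by encoding all scales simultaneously so that the scale can be selected after the factorization.

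A minor point: the paper ignores the dual witness altogether. It uses only that $(Ux_n)$ is seminormalized, and the codomain $\ell_2$ supplies a $1$-weakly null dual basis for free, so tracking the dual bound $\beta$ is unnecessary.
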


\begin{proof} Seeking a contradiction, assume that $U:X\to Y$ is a universal factoring operator for $\complement \mathfrak{M}_{\xi, \zeta}$.  This means there exists a sequence $(x_n)_{n=1}^\infty \subset X$ which is $\xi$-weakly null and such that $\inf_n \|Ux_n\|>0$.   If $\xi<\omega_1$, let $\eta=\xi$.   If $\xi=\omega_1$, fix $\eta<\omega_1$ such that $(x_i)_{i=1}^\infty$ is $\eta$-weakly null. Note that in either case, $0<\eta<\omega_1$.       By Lemma \ref{cb}, we may select $M_1\supset M_2\supset \ldots$ such that for each $n\in\nn$, $CB(\mathfrak{F}_{3^{-n}}((x_i)_{i=1}^\infty)\cap [M_n]^{<\nn})<\omega^\eta$.    For each $n\in \nn$, we may fix $\nu_n<\eta$ and $k_n\in\nn$ such that $CB(\mathfrak{F}_{3^{-n}}((x_i)_{i=1}^\infty)\cap [M_n]^{<\nn})<\omega^{\nu_n}k_n$.   Now fix $m_1<m_2<\ldots$, $m_n\in M_n$, and let $M=(m_n)_{n=1}^\infty$.   First note that for any $n\in\nn$ and  $L\in [M]$, there exists $N\in [L]$ such that $CB(\mathfrak{F}_{3^{-n}}((x_i)_{i=1}^\infty)\cap [N]^{<\nn})<\omega^{\nu_n}k_n$. Indeed, any $N\in [L]$ which is also a subset of the tail set $(m_i)_{i=n}^\infty$ of $M$ has this property.  Let $\mathcal{F}_n=\mathcal{A}_{k_n}[\mathcal{S}_{\nu_n}]$, which has Cantor-Bendixson index $\omega^{\nu_n}k_n<\omega^\eta$.  Let $Z$ be the space from Lemma \ref{btsu} with $\varpi_n=2^{-n}$. More precisely, $$[x]= \sup\{2^{-n} \sum_{i=1}^t \|I_ix\|_{\ell_2}: n\in\nn, I_1<\ldots <I_t, (\min I_i)_{i=1}^t\in \mathcal{A}_{k_n}[\mathcal{S}_{\nu_n}]\}.$$    By Lemma \ref{btsu}, the basis of $Z$ is $\eta$-weakly null.    The basis of this space is also $\xi$-weakly null, since $\eta\leqslant \xi$.    Let $I:Z\to \ell_2$ be the formal inclusion. Since the canonical basis of $\ell_2^*$ is $1$-weakly null and $e^*_n(Ie_n)=1$ for all $n\in\nn$, $I\in \complement \mathfrak{M}_{\xi, 1}$.    This means there exist $R:X\to Z$ and $L:\ell_2\to Y$ such that $U=LIR$.  Since $(Rx_i)_{i=1}^\infty$ and $(IRx_i)_{i=1}^\infty$ are weakly null and seminormalized in $Z$ and $\ell_2$, respectively, by a standard perturbation argument, we may fix $N=(n_i)_{i=1}^\infty\in[\nn]$ and a block sequence $(z_i)_{i=1}^\infty$ with respect to the $c_{00}$ basis such that $\ee:=\inf_i \|z_i\|_{\ell_2}>0$ and for all $(a_i)_{i=1}^\infty\in c_{00}$, $$[\sum_{i=1}^n a_iz_i]\leqslant 2[\sum_{i=1}^\infty a_i Rx_{n_i}].$$     Fix $n\in\nn$ such that $2^n/3^n<2\|R\|/\ee$.     By our remark above, by replacing $N$ with an infinite subset thereof, we may assume $$CB(\mathfrak{F}_{3^{-n}}((x_i)_{i=1}^\infty)\cap [N]^{<\nn})<\omega^{\nu_n}k_n.$$      Note that for any $F\in \mathcal{A}_{k_n}[\mathcal{S}_{\nu_n}]$ and non-negative scalars $(a_i)_{i\in F}$ summing to $1$, if $I_i=\text{supp}(z_i)$, then since $(\min \text{supp}(z_i))_{i\in F}$ is a spread of $F$,  $$[\sum_{i\in F}a_iz_i]\geqslant 2^{-n}\sum_{j\in F}\|I_j\sum_{i\in F}a_iz_i\|_{\ell_2} \geqslant 2^{-n} \ee.$$

Next let us note that by the geometric Hahn-Banach theorem, for any sequence $(y_i)_{i=1}^\infty$ and $\delta>0$, $F\in \mathfrak{F}_\delta((y_i)_{i=1}^\infty)$ if and only if $\min \{\|y\|: y\in \text{co}(y_i: i\in F)\}\geqslant \delta$.  By the last inequality from the previous paragraph, it follows that $CB(\mathfrak{F}_{2^{-n}\ee}((z_i)_{i=1}^\infty))\geqslant CB(\mathcal{A}_{k_n}[\mathcal{S}_{\nu_n}])=\omega^{\nu_n}k_n+1$.     Now for any finite subset $F$ of $\nn$, $$\min \{\|z\|: z\in \text{co}(z_i:i\in F)\} \leqslant 2\min\{\|Rx\|: x\in \text{co}(x_{n_i}: i\in F)\}\leqslant 2\|R\|\min \{\|x\|: x\in \text{co}(x_{n_i}: i\in F)\}.$$  From this it follows that for any $\delta>0$, $\mathfrak{F}_\delta((z_i)_{i=1}^\infty)\subset \mathfrak{F}_{2\|R\|\delta}((x_{n_i})_{i=1}^\infty)$ and $$CB(\mathfrak{F}_\delta((z_i)_{i=1}^\infty))\leqslant CB(\mathfrak{F}_{2\|R\|\delta}((x_{n_i})_{i=1}^\infty)) \leqslant CB(\mathfrak{F}_{2\|R\|\delta}((x_i)_{i=1}^\infty)\cap [N]^{<\nn}).$$   Applying this with $\delta= 2^{-n}\ee$ and noting that $2\|R\|\delta>3^{-n}$,\begin{align*} \omega^{\nu_n}k_n & <CB(\mathfrak{F}_{2^{-n}\ee/2}((z_i)_{i=1}^\infty)) \leqslant CB(\mathfrak{F}_{2\|R\|2^{-n}\ee/2}((x_i)_{i=1}^\infty)\cap [N]^{<\nn}) \\ & \leqslant CB(\mathfrak{F}_{3^{-n}}((x_i)_{i=1}^\infty)\cap [N]^{<\nn})<\omega^{\nu_n}k_n.\end{align*}  This contradiction finishes the proof.

\end{proof}

For the classes $\mathfrak{G}_{\xi, \zeta}$, we will prove a slightly stronger non-existence result, followed by a parallel positive result.   Let $F,G$ be Banach spaces with bases $(f_i)_{i=1}^\infty$, $(g_i)_{i=1}^\infty$ such that the formal inclusion $U:F\to G$ is well-defined.  For an infinite subset $M\in[\nn]$, let $F_M$ (resp. $G_M$) denote the closed span of $(f_{m_i})_{i=1}^\infty$ in $F$ (resp. $(g_{m_i})_{i=1}^\infty$ in $G$).   Let $U_M:F_M\to G_M$ be the restriction of the formal inclusion $U$ to $F_M$. Given an ideal $\mathfrak{I}$, let us say that $U$ is \emph{subsequentially universal} for $\complement \mathfrak{I}$ if  for any $A:X\to Y\in \complement \mathfrak{I}$ and any $L\in[\nn]$, there exist $M\in [L]$ and subspaces $X_0$, $Y_0$ of $X$ and $Y$, respectively, such that $A(X_0)\subset Y_0$ and $U_M$ factors through $A|_{X_0}:X_0\to Y_0$.

\begin{rem}\upshape An operator $U:F\to G$ being subsequentially universal for the class $\complement\mathfrak{I}$ provides a potentially small, easy to understand collection (formal inclusions between subsequences of fixed bases) which can be used to study the class $\mathfrak{I}$.  Furthermore, the definition not only requires that we can factor formal inclusions of these subsequences through members of $\complement \mathfrak{I}$, but that the subsequences of this type are fairly abundant.

\end{rem}

\begin{rem}\upshape If $\mathfrak{I}$ is injective (which our classes $\mathfrak{G}_{\xi, \zeta}$ are) and $U:F\to G$ is subsequentially universal for $\complement \mathfrak{I}$, first fix an isometric embedding $j:G\to \ell_\infty$. Then the conclusion that $U_M$ factors through a restriction $A|_{X_0}:X_0\to Y_0$ of $A$ together with injectivity imply the existence of a factorization of $jU_M$ through $A$.

\end{rem}

\begin{theorem} \begin{enumerate}[(i)]\item For $0\leqslant \zeta<\xi\leqslant \omega_1$, there do not exist basic sequences $(f_i)_{i=1}^\infty$ and $(g_i)_{i=1}^\infty$ such that $(f_i)_{i=1}^\infty$ is $\xi$-weakly null, $(g_i)_{i=1}^\infty$ is not $\zeta$-weakly null, and the formal identity $U:[f_i:i\in\nn]\to [g_i:i\in\nn]$ is well-defined and subsequentially universal for $\complement\mathfrak{G}_{\xi, \zeta}$. 
\item For any $0\leqslant \zeta<\xi\leqslant \omega_1$, there exists a formal identity operator $I$ between mixed Schreier spaces which lies in $\complement \mathfrak{G}_{\xi, \zeta}$ such that for each $\zeta<\beta<\xi$, $I$ is subsequentially universal for $\complement \mathfrak{G}_{\beta, \zeta}$.   \end{enumerate}

\label{big show}
\end{theorem}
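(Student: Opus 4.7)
The plan for part $(i)$ follows the template of Theorem \ref{nu1}. Assume for contradiction that basic sequences $(f_i)_{i=1}^\infty$ and $(g_i)_{i=1}^\infty$ with the stated properties exist and that the formal identity $U$ is subsequentially universal for $\complement\mathfrak{G}_{\xi,\zeta}$. After passing to a subsequence of $(g_i)$, which can be absorbed into the eventual choice of $L$, I may assume $(g_i)$ itself is an $\ell_1^\zeta+$-spreading model witnessed by some $\delta>0$. The approach is to use Lemma \ref{btsu} to build a specific $A:Z\to W$ in $\complement\mathfrak{G}_{\xi,\zeta}$: let $Z$ be a mixed-norm space whose defining families $\mathcal{F}_n$ have Cantor--Bendixson indices $\omega^{\nu_n}k_n<\omega^\xi$ and whose weights $\varpi_n$ tend to $0$, so that the basis of $Z$ is $\xi$-weakly null, and let $W$ be chosen (for instance, the Schreier space of order $\zeta$) so that the formal inclusion $A:Z\to W$ sends this basis to a non-$\zeta$-weakly null sequence. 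Applying the subsequential universality of $U$ to $A$ and $L=\nn$ yields $M\in[\nn]$ and a factorization $U_M=KA|_{X_0}J$; pulling the $\ell_1^\zeta+$-spreading structure of $(g_{m_i})$ back through $K$ produces a block sequence in $Z$ whose $\mathfrak{F}_{\delta'}$-sets have Cantor--Bendixson index strictly exceeding $\omega^{\nu_n}k_n$ for the relevant $n$, while the mixed-norm structure of $Z$ combined with Lemma \ref{cb} forces the opposite inequality, producing the contradiction that closes the argument.

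For part $(ii)$ I propose an explicit construction. Fix a sequence $(\beta_n)_{n=1}^\infty$ with $\zeta<\beta_n<\xi$ and $\sup_n\beta_n=\xi$ (taking $\beta_n$ eventually constant just below $\xi$ when $\xi$ is a successor), and fix $\varpi_n\downarrow 0$ in $(0,1]$. Let $F$ be the completion of $c_{00}$ under $\|x\|_F=\sup_n\varpi_n\|x\|_{\beta_n}$, let $G$ be the Schreier space of order $\zeta$, and let $I:F\to G$ be the formal identity, which is well-defined and bounded since $\mathcal{S}_\zeta\subset\mathcal{S}_{\beta_n}$ for every $n$. By an argument parallel to Lemma \ref{btsu}, the basis $(f_i)$ is $\xi$-weakly null, while $(If_i)$ is the canonical Schreier basis of order $\zeta$ and hence is not $\zeta$-weakly null; therefore $I\in\complement\mathfrak{G}_{\xi,\zeta}$. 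To verify subsequential universality for $\complement\mathfrak{G}_{\beta,\zeta}$ with $\zeta<\beta<\xi$: given $A:X\to Y$ in this class and $L\in[\nn]$, fix a $\beta$-weakly null $(x_k)\subset X$ with $(Ax_k)$ not $\zeta$-weakly null and pick $n_0$ with $\beta_{n_0}>\beta$. A repeated subsequence extraction, using the quantitative characterization of $\beta$-weakly nullness against $\mathcal{S}_{\beta_{n_0}}$ together with a Hahn--Banach upgrade of the $\ell_1^\zeta+$-spreading of $(Ax_k)$ to a two-sided $\ell_1^\zeta$-estimate, produces a subsequence $(x_{k_i})$ and $M=(m_i)\in[L]$ spread enough that $\|\sum a_i x_{k_i}\|_X\lesssim\|\sum a_i f_{m_i}\|_F$ and $\|\sum a_i g_{m_i}\|_G\lesssim\|\sum a_i Ax_{k_i}\|_Y$ for all $(a_i)\in c_{00}$. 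Setting $X_0=\overline{\text{span}}(x_{k_i})$, $Y_0=\overline{\text{span}}(Ax_{k_i})$, and defining $Jf_{m_i}=x_{k_i}$ and $K(Ax_{k_i})=g_{m_i}$ then gives the required factorization $U_M=KA|_{X_0}J$.

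The main obstacle is the quantitative coordination in part $(ii)$: the weights $\varpi_n$ of $F$ are fixed before the operator $A$ is revealed, so the subsequence extraction must force the $\beta$-weakly null constants of $(x_k)$ against $\mathcal{S}_{\beta_{n_0}}$ to beat the fixed scalar $\varpi_{n_0}$ on every non-trivial combination; this is precisely where the repeated averages hierarchy $\mathbb{S}^\xi_{M,n}$ plays its decisive role, as it does in Theorem \ref{nu1} and Lemma \ref{btsu}. The upgrade from one-sided $\ell_1^\zeta+$-spreading to the two-sided $\ell_1^\zeta$-estimate needed for boundedness of $K$ is a standard but non-trivial pairing argument using the spreading property of $\mathcal{S}_\zeta$ together with an auxiliary family such as $\mathcal{S}_\zeta[\mathcal{A}_2]$, whose Cantor--Bendixson index is absorbed by Proposition \ref{gra}$(iii)$.
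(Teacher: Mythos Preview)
Your plan for part $(i)$ has the logical order inverted, and this is a genuine gap rather than a matter of exposition. The essential point is that the mixed-norm space $Z$ must be built \emph{after}, and tailored to, the sequence $(f_i)$. In the correct argument one first applies Lemma \ref{cb} to the $\xi$-weakly null $(f_i)$ to obtain nested sets $M_1\supset M_2\supset\ldots$ together with $\nu_n<\eta$ and $k_n\in\nn$ such that $CB(\mathfrak{F}_{3^{-n}}((f_i))\cap[M_n]^{<\nn})<\omega^{\nu_n}k_n$; only then does one define $Z$ via families $\mathcal{F}_n$ of Cantor--Bendixson index exactly $\omega^{\nu_n}k_n+1$ (indeed $\mathcal{F}_n=\mathcal{A}_{k_n}[\mathcal{S}_{\varrho_n}[\mathcal{S}_\zeta]]$ with $\zeta+\varrho_n=\nu_n$), and one applies subsequential universality with $L$ equal to the \emph{diagonal} $(m_n)_{n=1}^\infty$, $m_n\in M_n$, not with $L=\nn$. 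With your choice $L=\nn$, the resulting $M$ bears no relation to the $M_n$, so the upper bound $CB(\mathfrak{F}_{3^{-n}}((f_i))\cap[N]^{<\nn})<\omega^{\nu_n}k_n$ for $N\in[M]$ is simply unavailable and the contradiction collapses. You also have the two halves of the contradiction swapped: the \emph{lower} bound $CB(\mathfrak{F}_{2^{-n}\ee}((z_i)))>\omega^{\nu_n}k_n$ comes from the mixed-norm structure of $Z$ (one shows $\mathcal{F}_n\subset\mathfrak{F}_{2^{-n}\ee}((z_i))$ using that each block $z_i$ satisfies $\|z_i\|_\beta>\ee$ for some $\beta<\zeta$, which is where the $\ell_1^\zeta+$-spreading of $(Iz_i)$ enters via \cite{Causey1}); the \emph{upper} bound comes from the factorization together with the pre-arranged bounds on $(f_i)$, not from Lemma \ref{cb} applied after the fact.

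For part $(ii)$ your construction of $I:F\to X_\zeta$ is essentially the paper's limit-ordinal case and correctly subsumes the successor case, but you misidentify the obstacle and propose the wrong tools. The weights $\varpi_n$ are \emph{not} a difficulty: for fixed $\beta$ one simply picks a single $n_0$ with $\beta_{n_0}>\beta$, and $1/\varpi_{n_0}$ appears only as a fixed constant in the domination bound; no extraction against the weights is required, and the repeated averages hierarchy plays no role in this part. The real work lies in two places. First, the domain domination $\|\sum a_i x_{m_i}\|\lesssim\|\sum a_i e_{m_i}\|_\beta$ is obtained by combining convex unconditionality of $(x_i)$ (from \cite{AMT}) to pass to the two-sided sets $\mathfrak{H}_\delta$, Gasparis's dichotomy \cite{Gasparis} to force $\mathfrak{H}_{\vartheta^n}\cap[M_n]^{<\nn}\subset\mathcal{S}_\beta$, and then a geometric-series estimate over level sets of a norming functional. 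Second, the range estimate requires upgrading $\ell_1^\zeta+$-spreading of $(Ax_i)$ to a genuine lower $\ell_1$-estimate on $\mathcal{S}_\zeta$-sets; the paper does this via convex unconditionality together with $\mathcal{S}_\zeta$-unconditionality from \cite{AG}. Your suggested ``pairing with $\mathcal{S}_\zeta[\mathcal{A}_2]$'' does not accomplish this: splitting coefficients into positive and negative parts gives no lower bound on $\|\sum a_i Ax_i\|$ without an unconditionality hypothesis you have not secured.
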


Given item $(i)$ of Theorem \ref{big show}, item $(ii)$ Theorem \ref{big show} is the best possible quantitative weakening in the search for universal factoring operators.

We will need the following consequence of Lemma \ref{btsu}.

\begin{proposition} Suppose $0<\eta<\omega_1$ and $\mathcal{F}_1, \mathcal{F}_2, \ldots$ are regular families containing all singletons and such that for each $j\in \nn$, $CB(\mathcal{F}_j)<\omega^\eta$. Fix a positive sequence of numbers $(\varpi_n)_{n=1}^\infty$ converging to zero.    Let $Z$ be the completion of $c_{00}$ with respect to the norm $$[x]_0= \sup\Bigl\{\varpi_j\|Ex\|_{\ell_1}: j\in \nn, E\in\mathcal{F}_j\}.$$  Then the basis of $Z_0$ is $\eta$-weakly null.

\label{sop}
\end{proposition}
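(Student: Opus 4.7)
The plan is to reduce this directly to Lemma \ref{btsu} by showing that the norm $[\cdot]_0$ of the present statement is dominated on $c_{00}$ by the norm $[\cdot]$ constructed in that lemma from the same data $(\varpi_n)_{n=1}^\infty$ and $(\mathcal{F}_j)_{j=1}^\infty$. Once this domination is established, the formal identity on $c_{00}$ extends to a bounded linear operator from the completion built in Lemma \ref{btsu} into $Z$, and since bounded operators preserve $\eta$-weakly null sequences (the elementary fact recorded in the introduction), the image of the canonical basis --- which is the canonical basis of $Z$ --- inherits $\eta$-weak nullity.

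To establish the domination, fix $j \in \nn$ and a nonempty $E \in \mathcal{F}_j$, and enumerate $E = \{n_1 < n_2 < \cdots < n_t\}$. Set $I_i := \{n_i\}$ for each $1 \leqslant i \leqslant t$. Then $I_1 < I_2 < \cdots < I_t$ and $(\min I_i)_{i=1}^t = E \in \mathcal{F}_j$, so the tuple $(I_1, \ldots, I_t)$ is admissible in the supremum defining the quantity $|x|_j$ of Lemma \ref{btsu}. Because each $I_i$ is a singleton, $\|I_i x\|_{\ell_2} = |x(n_i)|$, and hence
\[
\|Ex\|_{\ell_1} \;=\; \sum_{i=1}^t |x(n_i)| \;=\; \sum_{i=1}^t \|I_i x\|_{\ell_2} \;\leqslant\; |x|_j.
\]
Multiplying by $\varpi_j$ and taking the supremum over $j \in \nn$ and $E \in \mathcal{F}_j$ (the case $E = \varnothing$ being trivial) yields $[x]_0 \leqslant [x]$ for every $x \in c_{00}$.

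This inequality implies that the formal identity on $c_{00}$ extends to a norm-one linear operator $J$ from the completion $Z'$ of Lemma \ref{btsu} into $Z$, sending the $n$-th canonical basis vector of $Z'$ to the $n$-th canonical basis vector of $Z$. By Lemma \ref{btsu}, the canonical basis of $Z'$ is $\eta$-weakly null; applying $J$ and invoking the fact that bounded linear operators preserve $\eta$-weak nullity, the canonical basis of $Z$ is likewise $\eta$-weakly null, which is the desired conclusion. The only content of the argument is the singleton-partition observation in the displayed line above; all of the combinatorial and repeated-averages work has already been deployed inside Lemma \ref{btsu}, so the present proposition presents no additional obstacle and is simply the $\ell_1$-specialization of that result.
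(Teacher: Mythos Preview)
Your proof is correct and follows essentially the same approach as the paper: both arguments take singletons $I_i=\{n_i\}$ for the enumeration $E=\{n_1<\cdots<n_t\}\in\mathcal{F}_j$, observe that $(\min I_i)_{i=1}^t=E\in\mathcal{F}_j$ so that $\varpi_j\|Ex\|_{\ell_1}\leqslant [x]$, and then conclude that the formal inclusion from the space of Lemma~\ref{btsu} into $Z_0$ is bounded with norm one, whence the canonical basis of $Z_0$ inherits $\eta$-weak nullity. Your write-up is somewhat more explicit in spelling out the singleton computation $\|I_ix\|_{\ell_2}=|x(n_i)|$, but the argument is identical.
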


\begin{proof} Fix $j\in\nn$ and $E\in \mathcal{F}_j$.  Write $E=(n_i)_{i=1}^t$ and let $I_i=(n_i)$.    Then $I_1<\ldots <I_t$ and $(\min I_i)_{i=1}^t\in \mathcal{F}_j$.    Then if $Z$ is the space from Lemma \ref{btsu}, $$[x]\geqslant \varpi_j\|Ex\|_{\ell_1}.$$  From this it follows that the formal inclusion $I:Z\to Z_0$ is bounded with norm $1$. Since the canonical $c_{00}$ basis is $\eta$-weakly null in $Z$, its image is $\eta$-weakly null in $Z_0$.

\end{proof}

\begin{proof}[Proof of Theorem \ref{big show}]$(i)$ Seeking a contradiction, assume $F$ is the closed span of a $\xi$-weakly null, basic sequence $(f_i)_{i=1}^\infty$, $G$ is the closed span of a  basic, $\ell_1^\zeta+$-spreading model $(g_i)_{i=1}^\infty$, and the linear extension of the map taking $f_i$ to $g_i$ extends to a continuous linear operator $U:F\to G$ which is subsequentially universal for $\complement \mathfrak{G}_{\xi, \zeta}$.    If $\xi<\omega_1$, let $\eta=\xi$. If $\xi=\omega_1$, let $\eta<\omega_1$ be such that $(f_i)_{i=1}^\infty$ is $\eta$-weakly null.    Note that in either case, $0<\eta<\omega_1$.    As in the proof of Theorem \ref{nu1}, we may recursively select $M_1\supset M_2\supset \ldots$, $\nu_n<\eta$, $k_n\in\nn$ such that for all $n\in\nn$,  $$CB(\mathfrak{F}_{3^{-n}}((f_i)_{i=1}^\infty)\cap [M_n]^{<\nn})<\omega^{\nu_n}k_n.$$  Let us note that since $(f_i)_{i=1}^\infty$ is $\eta$-weakly null and $(g_i)_{i=1}^\infty=(Uf_i)_{i=1}^\infty$ is not, $\zeta<\eta$.    Therefore by replacing $\nu_n$ with $\zeta$ for any $n$ such that $\nu_n<\zeta$, we may assume that $\nu_n\geqslant \zeta$ for all $n\in\nn$.      Let $\varrho_n=\nu_n-\zeta$. That is, $\varrho_n$ is the unique ordinal such that $\zeta+\varrho_n=\nu_n$.     For each $n\in\nn$, let $$\mathcal{F}_n= \mathcal{A}_{k_n}[\mathcal{S}_{\varrho_n}[\mathcal{S}_\zeta]].$$ Note that for each $n\in\nn$, $CB(\mathcal{F}_n)=\omega^\zeta \omega^{\varrho_n} k_n+1=\omega^{\nu_n}k_n+1<\omega^\eta$. Note that $\mathcal{F}_n\supset \mathcal{S}_\zeta$ for all $n\in\nn$.    Let $Z$ be the completion of $c_{00}$ with respect to the mixed Schreier norm $$[x]=\sup \{2^{-n}\|Ex\|_{\ell_1}: n\in\nn, E\in\mathcal{F}_n\}.$$ Let $I:Z\to X_\zeta$ be the formal inclusion, which has norm $2$. Moreover, by Proposition \ref{sop}, the basis of $Z$ is $\eta$-weakly null.  But the canonical $X_\zeta$ basis is not $\zeta$-weakly null, so $I\in \complement \mathfrak{G}_{\eta, \zeta}\subset \complement \mathfrak{G}_{\xi, \zeta}$.

Fix $m_1<m_2<\ldots$, $m_n\in M_n$, and let $M=(m_n)_{n=1}^\infty$.       By the definition of subsequentially universal, there exists $P\in [M]$ such that the restriction $U_P:F_P\to G_P$ factors through some restriction of $I$. Fix $V_0, W_0$ and $R:F_P\to V_0$, $L:W_0\to G_P$ such that $I(U_0)\subset V_0$ and $LIR=U_P$.  Since $(LIRf_n)_{n\in P}$ is an $\ell_1^\zeta+$-spreading model, so is $(IRf_n)_{n\in P}$, and $(IRf_n)_{n\in P}$ has no $\zeta$-weakly null subsequence.     By standard perturbation arguments, we may find $N=(n_i)_{i=1}^\infty\in [P]$ and  a block sequence $(z_i)_{i=1}^\infty$ with respect to the $c_{00}$ basis such that $(Iz_i)_{i=1}^\infty$ is an $\ell_1^\zeta+$-spreading model in $X_\zeta$ and for all $(a_i)_{i=1}^\infty$, $$[\sum_{i=1}^\infty a_i z_i]\leqslant 2[\sum_{i=1}^\infty a_i Rf_{n_i}].$$      We must consider two cases.  If $\zeta=0$, we fix $0<\ee<\inf_i \|z_i\|_\zeta=\inf_i \|z_i\|_{c_0}$.     If $\zeta>0$, let $\gamma=\max\{\omega^\alpha: \omega^\alpha \leqslant \zeta\}$.  In the $\zeta>0$ case, by \cite[Theorem $2.14$]{Causey1}, there exists $\beta<\gamma$ such that $\lim\sup \|z_i\|_\beta>0$. In this case, we may pass to a subsequence of $N$, relabel, and assume there exists $\ee>0$ such that $\ee<\inf_i \|z_i\|_\beta$. This is because if no such $\ee$ and $\beta$ exist, $(Iz_i)_{i=1}^\infty$ is $\zeta$-weakly null in $X_\zeta$.        Now in either case, fix $n\in\nn$ so large that $2^n/3^n<2\|R\|/\ee$.    By passing to a subset of $N$ and relabeling  once more, we may assume that $$CB(\mathfrak{F}_{3^{-n}}((f_{n_i})_{i=1}^\infty))\leqslant CB(\mathfrak{F}_{3^{-n}}((f_i)_{i=1}^\infty)\cap [N]^{<\nn})< \omega^{\nu_n}k_n.$$      As in the proof of Theorem \ref{nu1}, we will show that $CB(\mathfrak{F}_{2^{-n}\ee}((z_i)_{i=1}^\infty))>\omega^{\nu_n}k_n$, and this contradiction will finish $(i)$.    In the $\zeta=0$ case, for each $i\in \nn$, we fix a singleton $E_i=(s_i)\in \text{supp}(z_i)$ such that $\|E_iz_i\|_{\ell_1}>\ee$.     In the $0<\zeta$ case, we fix $E_i\in \mathcal{S}_\beta$ such that $\|E_iz_i\|_{\ell_1}>\ee$.     In either case, there exists $T=(t_i)_{i=1}^\infty\in [\nn]$ such that for any $G\in \mathcal{S}_\zeta$, $\cup_{i\in G}E_{t_i}\in \mathcal{S}_\zeta$.    In the $\zeta=0$ case, we may take $T=\nn$, since $G$ and $E_i$ are singletons.  For the $\zeta>0$ case, we appeal to \cite[Lemma $2.2$$(i)$]{Causey1} and the fact that $\beta+\zeta=\zeta$ by properties of the additively indecomposable ordinal $\gamma$.   In the $\zeta=0$ case, $\varrho_n=\nu_n$ and $\mathcal{F}_n=\mathcal{A}_{k_n}[\mathcal{S}_{\nu_n}]$.  Then for any $F\in \mathcal{F}_n$ and non-negative scalars $(a_i)_{i\in F}$ summing to $1$, $$[\sum_{i\in F} a_i z_i]\geqslant 2^{-n}\sum_{j\in F}\|E_j \sum_{i\in F}a_iz_i\|_{\ell_1} \geqslant 2^{-n}\ee.$$   By another appeal to the geometric Hahn-Banach theorem, $\mathcal{A}_{k_n}[\mathcal{S}_{\nu_n}]\subset \mathfrak{F}_{2^{-n}\ee}((z_i)_{i=1}^\infty)$, which gives the required lower estimate on the Cantor-Bendixson index and finishes the $\zeta=0$ case of the proof.   Now assume $\zeta>0$ and let $T$ be as above.   Now fix $F\in \mathcal{A}_{k_n}[\mathcal{S}_{\varrho_n}[\mathcal{S}_\zeta]]$, which means we can write $$F=\bigcup_{i=1}^l G_i,$$ $G_1<\ldots <G_i$, $\varnothing\neq G_i\in \mathcal{S}_\zeta$, $(\min G_i)_{i=1}^l\in\mathcal{A}_{k_n}[\mathcal{S}_{\varrho_n}]$.    Now for each $1\leqslant i\leqslant l$, let $H_i=\cup_{j\in G_i} E_{t_j}$ and note that this set lies in $ \mathcal{S}_\zeta$ by our choice of $T$. Note that $\min H_i\geqslant \min G_i$, so $(\min H_i)_{i=1}^l$ is a spread of $(\min G_i)_{i=1}^l$, and therefore lies in $\mathcal{A}_{k_n}[\mathcal{S}_{\varrho_n}]$.    Therefore $$H:=\bigcup_{i=1}^l H_i\in \mathcal{A}_{k_n}[\mathcal{S}_{\varrho_n}[\mathcal{S}_\zeta]].$$  For any non-negative scalars $(a_i)_{i\in F}$ summing to $1$, $$[\sum_{i\in F} a_i z_{t_i}]\geqslant 2^{-n}\|H\sum_{i\in F}a_iz_{t_i}\|_{\ell_1} \geqslant 2^{-n}\ee.$$    One more appeal to the geometric Hahn-Banach theorem yields that $$\{(t_i)_{i\in E}: E\in \mathcal{A}_{k_n}[\mathcal{S}_{\varrho_n}[\mathcal{S}_\zeta]]\}\subset \mathfrak{F}_{2^{-n}\ee}((z_i)_{i=1}^\infty).$$   Since $$CB(\{(t_i)_{i\in E}: E\in \mathcal{A}_{k_n}[\mathcal{S}_{\varrho_n}[\mathcal{S}_\zeta]]\})=CB(\mathcal{A}_{k_n}[\mathcal{S}_{\varrho_n}[\mathcal{S}_\zeta]])=\omega^\zeta\omega^{\varrho_n}k_n+1=\omega^{\nu_n}k_n+1,$$  this gives the required lower estimate on the Cantor-Bendixson index and finishes $(i)$.

$(ii)$  We first consider the case in which $\xi$ is a successor, say $\xi=\eta+1$.     If $\xi=\zeta+1$, the conclusion is vacuous, as there are no $\beta$ with $\zeta<\beta<\xi$. Therefore we assume $\zeta<\eta$, so $0<\eta<\omega_1$.      Let $I:X_\eta\to X_\zeta$ be the formal inclusion, which is bounded by Proposition \ref{gra}.    Let $X,Y$ be Banach spaces and suppose $A:X\to Y\in \complement \mathfrak{G}_{\eta, \zeta}$.   Fix an $\eta$-weakly null sequence $(x_i)_{i=1}^\infty\subset B_X$ such that $(Ax_i)_{i=1}^\infty$ is basic, seminormalized and an $\ell_1^\zeta+$-spreading model.  By \cite[Theorem A]{AMT}, we may assume that $(x_i)_{i=1}^\infty$ and $(Ax_i)_{i=1}^\infty$ are convexly unconditional.  This means that for any $\delta>0$, there exists $C(\delta)>0$ such that for any $(a_i)_{i=1}^\infty$ with $\sum_{i=1}^\infty |a_i|\leqslant 1$ and $\|\sum_{i=1}^\infty a_ix_i\|\geqslant \delta$, then $\|\sum_{i=1}^\infty \lambda_i a_ix_i\|\geqslant C(\delta)$ for any scalars $(\lambda_i)_{i=1}^\infty$ with $|\lambda_i|=1$ for all $i\in\nn$.  A similar inequality holds for $(Ax_i)_{i=1}^\infty$.      By \cite[Theorem $1.10$]{AG}, we may assume $(y_i)_{i=1}^\infty$ is $\mathcal{S}_\zeta$-unconditional. This means that there exists a constant $a>0$ such that for any $(a_i)_{i=1}^\infty\in c_{00}$, $$\sup_{F\in \mathcal{S}_\zeta} a\|\sum_{i\in F}a_i Ax_i\|\leqslant \|\sum_{i=1}^\infty a_i Ax_i\|.$$   Since  $(Ax_i)_{i=1}^\infty$ is convexly unconditional and an $\ell_1^\zeta+$-spreading model,   $(Ax_i)_{i=1}^\infty$ is an $\ell_1^\zeta$-spreading model, which means there exists $b>0$ such that $$\inf\{\|\sum_{i\in F} a_i Ax_i\|: F\in \mathcal{S}_\zeta, \sum_{i\in F}|a_i|=1\}=b.$$   Then for any $(a_i)_{i=1}^\infty \in c_{00}$, $$\|\sum_{i=1}^\infty a_i Ax_i\|\geqslant a\sup \{\|\sum_{i\in F}a_i Ax_i\|: F\in \mathcal{S}_\zeta\} \geqslant ab \sup \{\sum_{i\in F}|a_i|: F\in \mathcal{S}_\zeta\}.$$  This yields that the formal inclusion $J:[Ax_i:i\in\nn]\to X_\zeta$ given by $JAx_i=e_i$ is well-defined and bounded.     Now for $\delta>0$, let $$\mathfrak{H}_\delta=\{E\in [\nn]^{<\nn}: (\exists x^*\in B_{X^*})(\forall n\in E)(|x^*(x_i)|\geqslant \delta)\}.$$  Since $(x_i)_{i=1}^\infty$ is $\eta$-weakly null, for every $\delta>0$ and $M\in[\nn]$, there exists $N\in[M]$ such that $$CB(\mathfrak{F}_\delta((x_i)_{i=1}^\infty)\cap [N]^{<\nn})<\omega^\eta.$$  By convex unconditionality, this implies that for every $\delta>0$ and $M\in [\nn]$, there exists $N\in [M]$ such that $CB(\mathfrak{H}_\delta\cap [N]^{<\nn})<\omega^\eta$.    Now let us fix $0<\vartheta<1$.  Let $L\in [\nn]$ be arbitrary and recursively select $L\supset M_1\supset M_2\supset \ldots$ such that for all $n\in\nn$, either $\mathfrak{H}_{\vartheta^n}\cap [M_n]^{<\nn}\subset \mathcal{S}_\eta$ or $\mathcal{S}_\eta\cap [M_n]^{<\nn}\subset \mathfrak{H}_{\vartheta^n}$. We may make these selections by \cite[Theorem $1.1$]{Gasparis}.     But our remark preceding the fixing of $\vartheta$ yields that the second option cannot hold, and $\mathfrak{H}_{\vartheta^n}\cap [M_n]^{<\nn}\subset \mathcal{S}_\eta$ for all $n\in\nn$.    Fix $m_1<m_2<\ldots$, $m_n\in M_n$, and let $M=(m_n)_{n=1}^\infty$.  Now fix $(a_i)_{i=1}^\infty\in c_{00}$ and $x^*\in B_{X^*}$ such that $$\|\sum_{i=1}^\infty a_ix_{m_i}\|=x^*(\sum_{i=1}^\infty a_i x_{m_i}).$$   For each $n\in\nn$, let $$I_n=\{i<n: |x^*(x_{m_i})|\in (\vartheta^n, \vartheta^{n-1}]\}$$ and $$J_n=\{i\geqslant n: |x^*(x_{m_i})|\in (\vartheta^n, \vartheta^{n-1}]\}.$$  For each $n\in\nn$, $$(m_i)_{i\in J_n}\in \mathfrak{H}_{\vartheta^n}\cap [M_n]^{<\nn}\subset \mathcal{S}_\eta,$$ so $$x^*(\sum_{i\in I_n\cup J_n} a_ix^*_{m_i}) \leqslant \vartheta^{n-1}\Bigl[\|(a_i)_{i=1}^\infty\|_\infty |I_n|+ \sum_{i\in J_n}|a_i|\Bigr] \leqslant n\vartheta^{n-1}\|\sum_{i=1}^\infty a_i e_{m_i}\|_\eta.$$   Therefore $$\|\sum_{i=1}^\infty a_ix_{m_i}\|\leqslant (1-\vartheta)^{-2}\|\sum_{i=1}^\infty a_ie_{m_i}\|_\eta.$$     Thus the maps taking $(e_{m_i})_{i=1}^\infty\subset X_\eta$ to $(x_{m_i})_{i=1}^\infty$ and $(Ax_{m_i})_{i=1}^\infty$ to $(e_{m_i})_{i=1}^\infty \subset X_\zeta$ are bounded.    Since $L\in[\nn]$ was arbitrary, this shows that $I:X_\eta\to X_\zeta$ is subsequentially universal for $\complement \mathfrak{G}_{\eta, \zeta}$.  Since for any $\zeta<\beta<\xi$, $\complement \mathfrak{G}_{\beta, \zeta}\subset \complement \mathfrak{G}_{\eta, \zeta}$, this completes the successor case.

Now suppose that $\xi$ is a limit ordinal.   Fix $\zeta<\xi_1<\xi_2<\ldots$ with $\xi_n\uparrow \xi$ and a null sequence $(\varpi_n)_{n=1}^\infty\subset (0,1]$ of positive numbers.  Let $Z$ be the completion of $c_{00}$ with respect to the norm $$[x]=\sup \{\varpi_n \|Ex\|_{\ell_1}: n\in\nn, E\in \mathcal{S}_{\xi_n}\}$$ and let $I:Z\to X_\zeta$ be the formal inlusion.      Suppose that for some $0\leqslant \zeta<\beta<\xi$, $A:X\to Y\in \complement \mathfrak{G}_{\beta, \zeta}$.    Arguing as in the successor case, we may select a sequence $(x_i)_{i=1}^\infty\subset B_X$ which is $\beta$-weakly null and such that the map taking $(Ax_i)_{i=1}^\infty $ to $(e_i)_{i=1}^\infty\subset X_\zeta$ is bounded.     Also, for $L\in[\nn]$, we may select $(m_i)_{i=1}^\infty\in [L]$ such that the map taking $(e_{m_i})_{i=1}^\infty\subset X_\beta$ to $(x_{m_i})_{i=1}^\infty$ is bounded.    Now if $n\in\nn$ is such that $\xi_n>\beta$, the formal inclusion of $X_{\xi_n}$ into $X_\beta$ is bounded by Proposition \ref{gra}, as is the map taking $(e_{m_i})_{i=1}^\infty \subset X_{\xi_n}$ to $(e_{m_i})_{i=1}^\infty$ into $X_\beta$.   Now the map taking $(e_{m_i})_{i=1}^\infty \subset Z$ to $(e_{m_i})_{i=1}^\infty\subset X_{\xi_n}$, and therefore the maps taking $(e_{m_i})_{i=1}^\infty\subset Z$ to $(x_{m_i})_{i=1}^\infty$ and $(Ax_{m_i})_{i=1}^\infty$ to $(e_{m_i})_{i=1}^\infty\subset X_\zeta$ are well-defined and bounded.      This yields the appropriate factorization of $I$ through a restriction of $A$ and gives the limit ordinal case.

\end{proof}

The following is implicitly contained in $(i)$ of the preceding proof. 

\begin{corollary} For $0\leqslant \zeta<\xi\leqslant \omega_1$, $\complement \mathfrak{G}_{\xi, \zeta}$ does not admit a universal factoring operator.

\end{corollary}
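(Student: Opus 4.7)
The plan is to essentially reduce to the argument of Theorem \ref{big show}(i) after a brief bookkeeping step. The key observation is that the subsequentially universal hypothesis in (i) was invoked only once: given the specific operator $I \in \complement\mathfrak{G}_{\xi,\zeta}$ constructed from a mixed Schreier space, subsequential universality was used to produce, on a suitable subsequence, a factorization $U_P = LIR$. A universal factoring operator $U$ for $\complement\mathfrak{G}_{\xi,\zeta}$ supplies such a factorization $U = LIR$ directly through any $I \in \complement\mathfrak{G}_{\xi,\zeta}$, with no need to pass to a subsequence or restrict to subspaces. So the entire contradiction in the proof of (i) applies to the corollary essentially verbatim.

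Concretely, suppose for contradiction that $U:X\to Y$ is a universal factoring operator for $\complement\mathfrak{G}_{\xi,\zeta}$. Since $U\in\complement\mathfrak{G}_{\xi,\zeta}$, there exists a $\xi$-weakly null sequence $(x_i)_{i=1}^\infty\subset X$ such that $(Ux_i)_{i=1}^\infty$ has no $\zeta$-weakly null subsequence; passing to a subsequence and invoking standard perturbation, we may assume $(Ux_i)_{i=1}^\infty$ is basic and an $\ell_1^\zeta+$-spreading model. Let $\eta=\xi$ if $\xi<\omega_1$, and otherwise fix $\eta<\omega_1$ such that $(x_i)_{i=1}^\infty$ is $\eta$-weakly null, so in either case $\zeta<\eta<\omega_1$.

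Now build the mixed Schreier space $Z$ and formal inclusion $I:Z\to X_\zeta$ exactly as in the proof of Theorem \ref{big show}(i): apply Lemma \ref{cb} to $(x_i)_{i=1}^\infty$ to obtain $M_1\supset M_2\supset\ldots$, ordinals $\zeta\leqslant \nu_n<\eta$, and integers $k_n$ with $CB(\mathfrak{F}_{3^{-n}}((x_i)_{i=1}^\infty)\cap[M_n]^{<\nn})<\omega^{\nu_n}k_n$; set $\varrho_n=\nu_n-\zeta$ and $\mathcal{F}_n=\mathcal{A}_{k_n}[\mathcal{S}_{\varrho_n}[\mathcal{S}_\zeta]]$; and complete $c_{00}$ under $[x]=\sup_n 2^{-n}\sup_{E\in\mathcal{F}_n}\|Ex\|_{\ell_1}$. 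By Proposition \ref{sop} the canonical basis of $Z$ is $\eta$-weakly null, while its image in $X_\zeta$ is not $\zeta$-weakly null, so $I\in\complement\mathfrak{G}_{\eta,\zeta}\subset\complement\mathfrak{G}_{\xi,\zeta}$.

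By universality, fix $R:X\to Z$ and $L:X_\zeta\to Y$ with $U=LIR$. Then $(Rx_i)_{i=1}^\infty$ is $\eta$-weakly null in $Z$ (bounded linear maps preserve $\eta$-weak nullity), and $(IRx_i)_{i=1}^\infty$ is an $\ell_1^\zeta+$-spreading model in $X_\zeta$, since $(LIRx_i)=(Ux_i)$ is one and $L$ is bounded. From this point the argument of Theorem \ref{big show}(i) applies verbatim, with the roles of $(Rf_{n_i})$ and $(IRf_{n_i})$ played by $(Rx_{n_i})$ and $(IRx_{n_i})$: extract a subsequence $N$ and a block sequence $(z_i)\subset Z$ such that $(Iz_i)$ is seminormalized in the relevant sense and $[\sum a_iz_i]\leqslant 2[\sum a_iRx_{n_i}]$, then for $n$ chosen so that $2^n/3^n<2\|R\|/\ee$ obtain the contradiction
\[
\omega^{\nu_n}k_n < CB(\mathfrak{F}_{2^{-n}\ee}((z_i)_{i=1}^\infty)) \leqslant CB(\mathfrak{F}_{3^{-n}}((x_i)_{i=1}^\infty)\cap[N]^{<\nn}) < \omega^{\nu_n}k_n.
\]
There is no genuine obstacle beyond recognizing that the hypothesis of (i) is strictly stronger than what is needed to drive the inner contradiction, and that universal factoring trivially supplies the required factorization through $I$.
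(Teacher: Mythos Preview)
Your proposal is correct and follows essentially the same approach as the paper's own sketch: both arguments observe that a universal factoring operator for $\complement\mathfrak{G}_{\xi,\zeta}$ supplies the factorization $U=LIR$ through the tailored mixed Schreier inclusion $I:Z\to X_\zeta$ directly, after which the contradiction of Theorem~\ref{big show}(i) runs verbatim. The only cosmetic difference is that the paper separates the $\zeta=0$ and $\zeta>0$ cases explicitly when defining $Z$, whereas you use the unified family $\mathcal{F}_n=\mathcal{A}_{k_n}[\mathcal{S}_{\varrho_n}[\mathcal{S}_\zeta]]$ throughout; since $\mathcal{S}_0=\mathcal{A}_1$ these coincide.
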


\begin{proof}[Sketch] If $U:F\to G\in \complement \mathfrak{G}_{\xi, \zeta}$ were universal for $\complement \mathfrak{G}_{\xi,\zeta}$, we first fix $(f_i)_{i=1}^\infty\subset X$ which is $\xi$-weakly null and such that $(Uf_i)_{i=1}^\infty=(g_i)_{i=1}^\infty$ is an $\ell_1^\zeta+$-spreading model.    Let $\eta=\xi$ if $\xi<\omega_1$ and otherwise let $\eta<\omega_1$ be such that $(f_i)_{i=1}^\infty$ is $\eta$-weakly null.     We fix $M_1\supset M_2\supset \ldots$, $\nu_n<\eta$, and $k_n\in\nn$ such that $$CB(\mathfrak{F}_{3^{-n}}((f_i)_{i=1}^\infty)\cap [M_n]^{<\nn})<\omega^{\nu_n}k_n.$$  We may assume $\nu_n\geqslant \zeta$ for all $n\in\nn$ and write $\nu_n=\zeta+\varrho_n$.    If $\zeta=0$, let $Z$ be the completion of $c_{00}$ with respect to the mixed Schreier norm $$[x]=\sup\{2^{-n}\|Ex\|_{\ell_1}: n\in\nn, E\in \mathcal{A}_{k_n}[\mathcal{S}_{\nu_n}]\}.$$  If $\zeta>0$, let $Z$ be the completion of $c_{00}$ with respect to the mixed Schreier norm $$[x]=\sup \{2^{-n}\|Ex\|_{\ell_1}: n\in\nn, E\in \mathcal{A}_{k_n}[\mathcal{S}_{\varrho_n}[\mathcal{S}_\zeta]]\}.$$   In either case, the formal inclusion $I:Z\to X_\zeta$ lies in $\complement \mathfrak{G}_{\xi, \zeta}$. If $U$ were to factor through $Z$ as $U=LIR$, then arguing as in the proof of Theorem \ref{big show}$(i)$, we would be able to find $N\in[M]$, $\ee>0$, $n$ such that $2^n/3^n<2\|R\|/\ee$, and a block sequence $(z_i)_{i=1}^\infty$ with respect to $c_{00}$ such that \begin{align*} \omega^{\nu_n}k_n & < CB(\mathfrak{F}_{2^{-n}\ee}((z_i)_{i=1}^\infty)) \leqslant CB(\mathfrak{F}_{3^{-n}}((f_i)_{i=1}^\infty)\cap [N]^{<\nn})<\omega^{\nu_n}k_n. \end{align*}

\end{proof}

\section{Codings of $\text{SB}$, $\mathcal{L}$, and dual spaces}

We first recall some facts and constructions from descriptive set theory. Two references for such facts are the books  \cite{Dodos} and \cite{Ke}.

The following fact is standard. However, since it will be used freely, we isolate it here. 

\begin{proposition} Let $X$ be a Polish space with topology $\tau$. Let $Y_n$ be a sequence of second countable topological spaces and $f_n:X\to Y_n$ a sequence of Borel functions. Then there exists a Polish topology $\tau'$ on $X$ finer than $\tau$ such that the Borel $\sigma$-algebras of $\tau$ and $\tau'$ coincide and for each $n\in\nn$, $f_n:(X, \tau')\to Y_n$ is continuous. 

\label{improv}
\end{proposition}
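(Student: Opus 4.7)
The plan is to reduce the statement to the following key lemma: if $B \subset X$ is Borel in $(X,\tau)$, then there is a finer Polish topology $\tau_B$ on $X$ with the same Borel sets as $\tau$ in which $B$ is clopen. Once the key lemma is established together with a method for combining countably many such refinements, the proposition will follow directly by applying it to the preimages under each $f_n$ of a countable basis of $Y_n$.

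I would first establish a \emph{combination lemma}: if $(\tau_n)_{n=1}^\infty$ is a sequence of Polish topologies on $X$, each finer than $\tau$ and each with the same Borel $\sigma$-algebra as $\tau$, then the join $\tau' = \bigvee_n \tau_n$ is again Polish with the same Borel $\sigma$-algebra. The proof goes by considering the diagonal embedding $\phi:X\to \prod_n (X,\tau_n)$, $\phi(x)=(x,x,\ldots)$. The pullback of the product topology along $\phi$ is exactly $\tau'$. Since $\tau$ is Hausdorff and $\tau\subset \tau_n$ for every $n$, any two distinct coordinates of a point outside the diagonal can be separated using disjoint $\tau$-open sets, showing that $\phi(X)$ is closed in the Polish product space; hence $\tau'$ is Polish. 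Moreover, $\tau'$ is generated by countably many subbasic sets, each of which is $\tau$-Borel, so the Borel $\sigma$-algebras coincide.

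Next I would prove the key lemma by transfinite induction on the Borel hierarchy of $(X,\tau)$. The base case treats closed sets: if $B$ is $\tau$-closed, let $\tau_B$ be the topology generated by $\tau\cup\{B\}$. Then $(X,\tau_B)$ is canonically homeomorphic to the topological disjoint union of the closed subspace $B$ and the open subspace $X\setminus B$, each of which is Polish in the $\tau$-subspace topology; hence $\tau_B$ is Polish, refines $\tau$, has the same Borel $\sigma$-algebra, and makes $B$ clopen. Dually, if $B$ is open, replace $B$ by $X\setminus B$ and note that clopenness is symmetric. For the inductive step, if $B=\bigcup_n B_n$ (or $\bigcap_n B_n$) and each $B_n$ can be made clopen in a Polish refinement $\tau_n$ with the same Borel sets, the combination lemma produces a single Polish refinement $\tau' = \bigvee_n \tau_n$ in which every $B_n$ is clopen, and hence $B$ is open; one more application of the base case to the set $B$, now open in the refined topology $\tau'$, makes $B$ itself clopen.

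With the key lemma in hand, the proposition is immediate. For each $n\in\nn$, fix a countable basis $(V_{n,k})_{k=1}^\infty$ of $Y_n$. Each preimage $f_n^{-1}(V_{n,k})$ is Borel in $(X,\tau)$, so the key lemma yields a Polish topology $\tau_{n,k}$ on $X$ refining $\tau$, having the same Borel sets, and rendering $f_n^{-1}(V_{n,k})$ open. Enumerate the countable family $(\tau_{n,k})_{n,k\in\nn}$ and apply the combination lemma to obtain a Polish topology $\tau'$ on $X$ refining $\tau$, with the same Borel $\sigma$-algebra, in which every $f_n^{-1}(V_{n,k})$ is open. Since the $V_{n,k}$ form a basis of $Y_n$, each $f_n:(X,\tau')\to Y_n$ is continuous. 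The main obstacle to a fully rigorous write-up is the inductive step of the key lemma, because the induction must be set up so that the combination lemma can be invoked without circularity; fixing the statement as ``$B$ can be made clopen in a Polish refinement with the same Borel $\sigma$-algebra'' and carrying out the induction on Borel class of $B$ in $\tau$ (not in the evolving refinements) resolves this cleanly.
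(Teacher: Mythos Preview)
Your proposal is correct and follows exactly the same approach as the paper: fix a countable base $(U_{m,n})_m$ for each $Y_n$, and refine $\tau$ to a Polish topology with the same Borel $\sigma$-algebra in which every preimage $f_n^{-1}(U_{m,n})$ is (cl)open. The only difference is that the paper invokes this last step as a black box (citing \cite[Lemma~13.3, Page~82]{Ke}), whereas you supply a self-contained proof of it via the combination lemma and transfinite induction on the Borel hierarchy.
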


\begin{proof} For each $n\in\nn$, let $(U_{m,n})_{m=1}^\infty$ be a countable base for the topology of $Y_n$. Then by \cite[Lemma $13.3$, Page $82$]{Ke}, there exists a Polish topology $\tau'$ on $X$ finer than $\tau$, generating the same Borel $\sigma$-algebra as $\tau$,  such that each of the sets $f^{-1}_n(U_{m,n})$, $m,n\in\nn$, is clopen in $\tau'$.

\end{proof}

Let us recall that a subset $A$ of a Polish space $S$ is $\Sigma_1^1$ if there exist a Polish space $P$,  a Borel subset $B$ of $P$, and a Borel function $f:P\to S$ such that $f(B)=A$.    A subset $C$ of $S$ is $\Pi_1^1$ if  $S\setminus C$ is $\Sigma_1^1$.    We say a subset $A$ of a Polish space $S$ is $\Sigma_2^1$ if there exist a Polish space $P$, a $\Pi_1^1$ subset $B$ of $P$, and a Borel function $f:S\to P$ such that $f(B)=A$.  If for subset $A,B$ of Polish spaces $S,P$, respectively, and a Borel function $f:S\to P$, $f^{-1}(B)=A$, then we say $A$ is \emph{Borel reducible} to $B$.     Given $j\in \{1, 2\}$, we say a set $A\subset S$ is $\Pi^1_j$-\emph{hard} if for any Polish space $P$ and any $\Pi_j^1$ subset $B$ of $P$, $A$ is Borel reducible to $B$.  We say $A\subset S$ is $\Pi^1_j$-\emph{complete} if it is $\Pi_j^1$-hard and $\Pi_j^1$.

We let $C(2^\nn)$ be the space of continuous functions on the Cantor set.    We endow $F(C(2^\nn))$, the set of closed subsets of $C(2^\nn)$, with its Effros-Borel $\sigma$-algebra, and recall that this is a standard Borel space.   That is, there exists a Polish topology on $F(C(2^\nn))$ the Borel $\sigma$-algebra of which is the Effros-Borel $\sigma$-algebra.  By a result of Kuratowski and Ryll-Nardzewski \cite{KRN}, there exists a sequence $d_n:F(C(2^\nn))\setminus\{\varnothing\}\to C(2^\nn)$ such that each $d_n$ is Borel and for each $\varnothing\neq F\in F(C(2^\nn))$, $\{d_n(F): n\in\nn\}$ is a dense subset of $F$.   By standard techniques, we may assume that for each finite subset $F$ of $\nn$ and all rational numbers $(p_i)_{i\in F}$, there exists $n\in \nn$ such that $\sum_{i\in F}p_i d_i=d_n$.     We let $\textbf{SB}$ denote the subset of $F(C(2^\nn))$ consisting of those closed subsets of $C(2^\nn)$ which are linear subspaces.    This is easily seen to be a Borel subset of $F(C(2^\nn))$, whence it is also a standard Borel space.  From now on, we will treat $\textbf{SB}$ as a Polish space. However, as we are not concerned with the particular Polish topology on $\textbf{SB}$ which generates the Effros-Borel $\sigma$-algebra as its Borel $\sigma$-algebra, we will fix a Polish topology on $\textbf{SB}$ which generates the Effros-Borel $\sigma$-algebra and such that each selector $d_n$ is continuous with respect to this topology.   Define $r_n(X)=d_n(X)$ if $\|d_n(X)\|\leqslant 1$ and $r_n(X)=d_n(X)/\|d_n(X)\|$ if $\|d_n(X)\|>1$.  Note that each $r_n$ is also continuous and $\|r_n(X)\|\leqslant 1$ for each $n\in\nn$ and $X\in \textbf{SB}$.    We also let $\mathcal{L}$ denote the set of all triples $(X,Y, (y_n)_{n=1}^\infty)\in \textbf{SB}\times \textbf{SB}\times C(2^\nn)^\nn$ such that $y_n\in Y$ for all $n\in\nn$ and there exists $k\in \nn$ such that for all $n\in\nn$ and scalars $(a_i)_{i=1}^n$, $$\|\sum_{i=1}^n a_i y_i\|\leqslant k\|\sum_{i=1}^n a_id_i(X)\|.$$  It is easy to see that this is a Borel subset of $\textbf{SB}\times \textbf{SB}\times C(2^\nn)^\nn$, and is also therefore a standard Borel space.    We fix a Polish topology on $\mathcal{L}$ stronger than the topology inherited as a subspace of the product $\textbf{SB}\times\textbf{SB}\times C(2^\nn)^\nn$ and which generates the Effros-Borel $\sigma$-algebra. Note that the functions $(X, Y, (y_n)_{n=1}^\infty)\mapsto d_m(X), r_m(X)$ are still continuous for all $m\in\nn$.   This is the coding of all operators between separable Banach spaces introduced in \cite{BF}. That is, for any $(X, Y, (y_n)_{n=1}^\infty)\in \mathcal{L}$, the map $A_0:\{d_n(X):n\in\nn\}\to Y$ given by $A_0d_n(X)=y_n$ is well-defined and extends to a continuous, linear operator $A:X\to Y$. Conversely, if $A:X\to Y$ is a continuous, linear operator for $X,Y\in \textbf{SB}$, then $(X, Y, (Ad_n(X))_{n=1}^\infty)\in \mathcal{L}$.

We also recall the coding of dual spaces.  Let $H=[-1,1]^\nn$, endowed with a Polish topology such that the coordinate functional $(a_i)_{i=1}^\infty\mapsto a_n$ is continuous for each $n\in\nn$, and the map $(a_i)_{i=1}^\infty\mapsto \|(a_i)_{i=1}^\infty\|_\infty$ is continuous. We can see that such a topology exists by first endowing $H$ with its product topology $\tau$ and then using Proposition \ref{improv} to find a finer Polish topology $\tau'$ such that  $(a_i)_{i=1}^\infty\mapsto \|(a_i)_{i=1}^\infty\|_\infty$, which is Borel with respect to $\tau$, is continuous with respect to $\tau'$.    We leave this topology fixed throughout.     Given $X\in \textbf{SB}$ and $x^*\in B_{X^*}$, we define $$H\ni f_{x^*}= (x^*(r_1(X)), x^*(r_2(X)), x^*(r_3(X)), \ldots).$$   We let $K_X=\{f_{x^*}\in H: x^*\in B_{X^*}\}$.   We define $D\subset \textbf{SB}\times H$ by $(X,f)\in D\Leftrightarrow f\in K_X$.    Then $D$ is a Borel set and the bijective identification $B_{X^*}\ni x^*\leftrightarrow f_{x^*}\in K_X$ is isometric  (see properties P10-P12 from \cite[Page 12]{Dodos}).  More generally, for $x^*_1, \ldots, x^*_n\in K_X$ and scalars $(a_i)_{i=1}^n$, $\|\sum_{i=1}^n a_i x^*_i\|_{X^*}=\|\sum_{i=1}^n a_i f_{x^*_i}\|_\infty$.  

We also remark that since $2^\nn$ with its product topology is compact and the subset $[\nn]$ of infinite subsets of $\nn$ is $G_\delta$ in $2^\nn$, $[\nn]$ with its inherited topology is a Polish space.

Our first result regarding this is that functional evaluation is Borel.

\begin{lemma} The set $\mathcal{E}:=\{(Y,y, f_{y^*})\in \textbf{\emph{SB}}\times C(2^\nn)\times D: y\in Y, f_{y^*}\in K_Y\}$ is Borel and the map $(Y, y, f_{y^*})\mapsto y^*(y)$ is Borel from $\mathcal{E}$ to $\rr$.  
\label{hardest part}
\end{lemma}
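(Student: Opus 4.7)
The plan is to first verify that $\mathcal{E}$ is Borel, then exhibit the evaluation map as a pointwise limit of Borel functions. For the first step, $D$ is already assumed Borel, and the condition $y\in Y$ can be rewritten using density of the selectors: since $(d_k(Y))_{k=1}^\infty$ is dense in $Y$ and $Y$ is closed,
\[
\{(Y,y)\in \textbf{SB}\times C(2^\nn) : y\in Y\} = \bigcap_{n=1}^\infty\bigcup_{k=1}^\infty \{(Y,y) : \|d_k(Y)-y\| < 1/n\}.
\]
The sets on the right are open because each $d_k$ is continuous on $\textbf{SB}$ by our choice of Polish topology and the sup norm is continuous on $C(2^\nn)$, so the left-hand side is Borel and hence so is $\mathcal{E}$.

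For the evaluation, the key observation is that although $y^*$ is coded only through its values on the normalized selectors $r_k(Y)$, the value $y^*(d_k(Y))$ is recoverable from $f_{y^*}$. Since $d_k(Y)=\max(1,\|d_k(Y)\|)\cdot r_k(Y)$ and $f_{y^*}(k)=y^*(r_k(Y))$, for each fixed $k$ the map
\[
g_k(Y,y,f_{y^*}) := y^*(d_k(Y)) = \max(1,\|d_k(Y)\|)\cdot \pi_k(f_{y^*})
\]
is continuous on $\mathcal{E}$, using continuity of $d_k$, of $\|\cdot\|$ on $C(2^\nn)$, and of the coordinate map $\pi_k$ on $H$. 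I would then perform the Borel selection
\[
k_n(Y,y,f_{y^*}) := \min\{k\in\nn : \|d_k(Y)-y\| < 1/n\},
\]
which is well-defined on $\mathcal{E}$ by density, and whose level sets $\{k_n=k\}$ are Borel, being finite Boolean combinations of conditions of the form $\{\|d_j(Y)-y\| < 1/n\}$.

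Setting $h_n(Y,y,f_{y^*}) := g_{k_n(Y,y,f_{y^*})}(Y,y,f_{y^*})$, one sees that $h_n$ is Borel because for any Borel $B\subset\rr$, $h_n^{-1}(B) = \bigcup_k \bigl(\{k_n=k\}\cap g_k^{-1}(B)\bigr)$, a countable union of Borel sets. Finally, $|h_n-y^*(y)| = |y^*(d_{k_n}(Y)-y)| \leq \|d_{k_n}(Y)-y\| < 1/n$, so $y^*(y)=\lim_n h_n$ on all of $\mathcal{E}$, exhibiting the evaluation as a pointwise limit of Borel functions and hence Borel. The main obstacle is conceptual rather than technical: $y^*$ is not directly available from the triple, so one cannot just plug $y$ in; the whole point is to approximate $y$ within $Y$ by a Borel-selected sequence of selectors on which $y^*$ can be read off from $f_{y^*}$. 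Once this is arranged, everything else is routine Borel bookkeeping.
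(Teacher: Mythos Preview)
Your proof is correct. Both you and the paper establish Borelness of $\mathcal{E}$ in essentially the same way (you spell out the standard argument that $\{(Y,y):y\in Y\}$ is Borel, while the paper simply cites it), and both rely on the same key identity $y^*(d_k(Y))=\max(1,\|d_k(Y)\|)\cdot f_{y^*}(k)$ to recover the action of $y^*$ from the code $f_{y^*}$.

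Where the approaches diverge is in the packaging of the second part. The paper proves the stronger statement that evaluation is actually \emph{continuous} with respect to the fixed Polish topologies: it takes a convergent sequence $(Y_n,y_n,f_{y^*_n})\to(Y,y,f_{y^*})$, fixes a single index $j$ with $\|y-d_j(Y)\|$ small, and tracks four error terms to show $y^*_n(y_n)\to y^*(y)$. You instead prove only Borelness, by exhibiting the evaluation as a pointwise limit of Borel functions $h_n$ built from a Borel selection $k_n$ of good approximating indices. Your route is slightly more modular and avoids the $\varepsilon$-bookkeeping of the sequential argument, at the cost of the stronger continuity conclusion; since the lemma only asks for Borel, nothing is lost for the purposes of the paper.
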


\begin{proof} The map $(Y, y, f_{y^*})\mapsto (Y, f_{y^*})$ is continuous and therefore the set of $(Y, y, f_{y^*})$ such that $f_{y^*}\in Y$ is the set of $(Y, y, f_{y^*})$ such that $(Y, f_{y^*})\in D$ is Borel. It is known that the set of $(Y, y)\in\textbf{SB}\times C(2^\nn)$ such that $y\in Y$ is Borel (see \cite[Property (P4), Page 10]{Dodos}). Thus $\mathcal{E}$ is Borel.

To prove that evaluation is Borel, it is sufficient to prove that evaluation is continuous when $\textbf{SB}$ and $H$ are endowed with the topologies we have fixed above. We therefore proceed assuming that for each $n\in\nn$, $d_n$ and $r_n$ are continuous on $\textbf{SB}$ for each $n\in\nn$, and $(a_i)_{i=1}^\infty\mapsto a_n$, $(a_i)_{i=1}^\infty\mapsto \|(a_i)_{i=1}^\infty\|_\infty$ are continuous on $H$.  Assume $(Y, y, f_{y^*})\in \mathcal{E}$ is the limit of a sequence $((Y_n, y_n, f_{y^*_n}))_{n=1}^\infty\subset \mathcal{E}$.    

Define $T:[0, \infty)\to [1, \infty)$ by  $T(x)=1$ if $0\leqslant x\leqslant 1$ and $T(x)=x$ for all $x>1$.  Note that $T$ is $1$-Lipschitz. Fix $\ee>0$ and $j\in\nn$ such that $\|y-d_j(Y)\|<\ee$.      Fix $m\in\nn$ such that for all $n\geqslant m$, $\|y_n-y\|<\ee$, $|f_{y^*}(j)-f_{y^*_n}(j)|<\ee$, and $\|d_j(Y)-d_j(Y_n)\|<\ee$.  Now let us note that $d_j(Y)=T(\|d_j(Y)\|) r_j(Y)$, so $$y^*(d_j(Y))= T(\|d_j(Y)\|) f_{y^*}(j).$$   Similarly, for any $n\in\nn$, $$y^*_n(d_j(Y_n))=T(\|d_j(Y_n)\|)f_{y^*_n}(j).$$   By the triangle inequality, for any $n\geqslant m$, $$\|y_n-d_j(Y_n)\|\leqslant \|y_n-y\|+\|y-d_j(Y)\|+\|d_j(Y)-d_j(Y_n)\|<3\ee.$$   Thus $$    |y^*(y)-T(\|d_j(Y)\|)f_{y^*}(j)|=|y^*(y)-y^*(d_j(Y))|\leqslant \|y^*\|\|y-d_j(Y)\|\leqslant \|y-d_j(Y)\| <\ee $$   and for any $n\geqslant m$, $$    |y^*_n(y_n)-T(\|d_j(Y_n)\|)f_{y^*_n}(j)|=|y^*_n(y_n)-y^*_n(d_j(Y_n))|\leqslant \|y^*_n\|\|y_n-d_j(Y_n)\|\leqslant \|y_n-d_j(Y_n)\| <3\ee. $$  From this it follows that for any $n\geqslant m$, \begin{align*} |y^*(y)-y^*_n(y_n)| & \leqslant |y^*(y)- T(\|d_j(Y)\|)f_{y^*}(j)| + |T(\|d_j(Y)\|)f_{y^*}(j)- T(\|d_j(Y)\|)f_{y^*_n}(j)| \\ & + |T(\|d_j(Y)\|)f_{y^*_n}(j) - T(\|d_j(Y_n)\|)f_{y^*_n}(j)| + |T(\|d_j(Y_n)\|)f_{y^*_n}(j) - y^*_n(y_n)|  \\ & \leqslant \ee + T(\|d_j(Y)\|)\ee  + \ee + 3\ee \\ & \leqslant (5+\|y\|+\ee)\ee.  \end{align*}  Since $\ee>0$ was arbitrary, we are done.

\end{proof}

For the remainder of this work, when an ideal is denoted by a fraktur letter (with subscripts), the associated subsets of $\mathcal{L}$ and $\textbf{SB}$ will be denoted by the same letter (with the same subscripts) in calligraphic and bold fonts, respectively. That is, for an ideal $\mathfrak{I}$, we let $\mathcal{I}$ denote subset of $\mathcal{L}$ consisting of those members $(X, Y, (y_n)_{n=1}^\infty)$ of $\mathcal{L}$ such that the unique continuous extension of the function $d_n(X)\mapsto y_n$ lies in $\mathfrak{I}$.   We let $\textbf{I}$ denote the subset of $\textbf{SB}$ consisting of those $X\in \textbf{SB}$ such that $X\in \textsf{I}$ (equivalently, such that $I_X\in \mathfrak{I}$).

\begin{rem}\upshape The map $\Phi:\textbf{SB}\to \mathcal{L}$ given by $\Phi(X)=(X,X, (d_n(X))_{n=1}^\infty)$ is Borel. From this it follows that for any ideal $\mathfrak{I}$ of operators, if $\mathcal{I}$ is $\Pi_1^1$ (resp. $\Pi_2^1$), then $\textbf{I}$ is $\Pi_1^1$ (resp. $\Pi_2^1$).    Therefore to provide an upper estimate on the complexities of $\mathcal{I}$ and $\textbf{I}$, it is sufficient to provide that upper estimate only for $\mathcal{I}$.

Similarly, in order to show that $\mathcal{I}$ and $\textbf{I}$ are $\Pi_1^1$-hard (resp. $\Pi_2^1$-hard), it is sufficient to show that $\textbf{I}$ is $\Pi_1^1$-hard (resp. $\Pi_2^1$-hard).  To see this,  if $P$ is a Polish space,  $C\subset P$ is a $\Pi_1^1$ subset of $P$,  and $\Psi:P\to \textbf{I}$ is a Borel map such that $\Psi^{-1}(\textbf{I})=C$, then $\Phi\circ\Psi:P\to \mathcal{L}$ is a Borel reduction of $\mathcal{I}$ to $C$. A similar statement holds for $\Pi_2^1$-hard sets.   

\label{oopyal}
\end{rem}

Given $\xi<\omega_1$ and $M\in [\nn]$, there exists a unique, non-empty, finite initial segment of $M$ which is a maximal member of $\mathcal{S}_\xi$.  We denote this initial segment by $M|_\xi$.   Given a Banach space $X$, a  sequence $(x_n)_{n=1}^\infty\subset X$, $\xi<\omega_1$, and $M\in[\nn]$, we let $$\Xi_\xi((x_n)_{n=1}^\infty,  M)= \min\{\|x\|: x\in \text{co}(x_n: n\in M|_\xi)\}.$$

For a Banach space $X$, $(x_n)_{n=1}^\infty\subset X$, and $M=(m_n)_{n=1}^\infty\in [\nn]$, let us say that the pair $((x_n)_{n=1}^\infty, M)$ has $D_\xi$ provided that for any $k\in \nn$ and $N\in [M]$ with $\min N\geqslant m_k$, $\Xi_\xi((x_n)_{n=1}^\infty, N)\leqslant 1/k$.

\begin{proposition} Let $(x_n)_{n=1}^\infty$ be a sequence in the Banach space $X$. Let $\xi<\omega_1$.  \begin{enumerate}[(i)]  \item $(x_n)_{n=1}^\infty$ fails to be $\xi$-weakly null if and only if there exist $m\in\nn$ and $M\in [\nn]$ such that for all $N\in [M]$, $\Xi_\xi((x_n)_{n=1}^\infty, N)\geqslant 1/m$. \item If $(x_n)_{n=1}^\infty\subset X$ is $\xi$-weakly null, then for any $M_0\in [\nn]$, there exists $M\in [M_0]$ such that $((x_n)_{n=1}^\infty, M)$ has $D_\xi$.  \item If $M=(m_n)_{n=1}^\infty\in [\nn]$ is such that $((x_n)_{n=1}^\infty, M)$ has $D_\xi$, then $(x_{m_n})_{n=1}^\infty$ is $\xi$-weakly null.     \end{enumerate}
\label{fras}
\end{proposition}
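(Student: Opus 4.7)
The plan is to prove (i), (ii), (iii) in sequence. Part (i) identifies $\xi$-weak nullity with a uniform lower bound on $\Xi_\xi$; (ii) and (iii) will follow from (i) via a Galvin-Prikry argument and a direct application, respectively. The main obstacle is the discrepancy between the ``paper's'' $\ell_1^\xi+$-spreading model definition (where $F\in\mathcal{S}_\xi$ refers to the re-indexing of a subsequence from $1$) and the ``strict'' form (where $F$ ranges over $\mathcal{S}_\xi$-subsets of a fixed $M \subseteq \nn$): since $\mathcal{S}_\xi$ is closed under spreading indices upward but not under compressing them downward, these two forms are not immediately equivalent. Bridging them is the crux of the forward direction of (i).

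For the reverse direction of (i), assume $m, M = (m_i)_{i=1}^\infty$ satisfy $\Xi_\xi((x_n)_{n=1}^\infty, N) \geqslant 1/m$ for all $N \in [M]$. Given $F \in \mathcal{S}_\xi$ and a convex combination $y = \sum_{i \in F} a_i x_{m_i}$, the spread $G = \{m_i : i \in F\}$ lies in $\mathcal{S}_\xi$. Extending $G$ to an infinite $N \in [M]$ beginning with $G$, the maximality of $N|_\xi$ as an initial segment in $\mathcal{S}_\xi$ forces $G \subseteq N|_\xi$, so $y$ lies in $\text{co}(x_n : n \in N|_\xi)$ (via extension by zeros) and $\|y\| \geqslant \Xi_\xi((x_n)_{n=1}^\infty, N) \geqslant 1/m$. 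Hence $(x_{m_i})$ is an $\ell_1^\xi+$-spreading model of constant $1/m$, so $(x_n)$ fails to be $\xi$-weakly null. For the forward direction, start from an $\ell_1^\xi+$-spreading model subsequence $(x_{m_i})$ of constant $c$. I will extract $M^* \subseteq (m_i)$ and $\epsilon > 0$ witnessing the strict form: every convex combination of $(x_n : n \in F)$ with $F \subseteq M^*$ and $F \in \mathcal{S}_\xi$ has norm $\geqslant \epsilon$. This uses Proposition \ref{gra}(iv) and (v) applied to the pullback family $\mathcal{S}_\xi^{(m_i)} = \{E : (m_j)_{j \in E} \in \mathcal{S}_\xi\}$ (regular with the same Cantor-Bendixson index as $\mathcal{S}_\xi$), together with convex unconditionality of a further subsequence (Theorem~A of Argyros-Mercourakis-Tsarpalias). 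Once the strict form is in hand, for $N \in [M^*]$, $N|_\xi \subseteq M^*$ lies in $\mathcal{S}_\xi$, so $\Xi_\xi((x_n)_{n=1}^\infty, N) \geqslant \epsilon$; set $m = \lceil 1/\epsilon \rceil$.

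For (ii), the function $N \mapsto \Xi_\xi((x_n)_{n=1}^\infty, N)$ is continuous on $[\nn]$ (product topology), since $N|_\xi$ depends only on a finite initial segment of $N$; hence $\{N : \Xi_\xi \leqslant 1/k\}$ is closed. By the Galvin-Prikry theorem, for each $k \geqslant 1$ and each infinite $M' \in [M_0]$, there exists $M_k \in [M']$ with either (a) $\Xi_\xi(N) \leqslant 1/k$ for every $N \in [M_k]$, or (b) $\Xi_\xi(N) > 1/k$ for every $N \in [M_k]$. Alternative (b) is ruled out by part (i) since $(x_n)$ is $\xi$-weakly null, so (a) holds. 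Diagonalize: starting with $M_0$, set $m_k = \min M_k$ and take $M_{k+1} \in [M_k \setminus \{m_k\}]$ satisfying (a) at level $k+1$. The resulting $M = (m_k)_{k=1}^\infty$ satisfies $D_\xi$: for any $N \in [M]$ with $\min N \geqslant m_k$, we have $N \in [M_k]$, so $\Xi_\xi \leqslant 1/k$.

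For (iii), I argue by contradiction. Suppose $((x_n)_{n=1}^\infty, M)$ has $D_\xi$ but $(x_{m_n})_{n=1}^\infty$ is not $\xi$-weakly null. Then some subsequence of $(x_{m_n})$---and hence a subsequence of $(x_n)$ whose indices lie in $M$---is an $\ell_1^\xi+$-spreading model. Applying the forward direction of (i) to $(x_n)$ restricted to these indices yields (in its strict form) $M^* \subseteq M$ and $\epsilon > 0$ such that every convex combination of $(x_n : n \in F)$ with $F \subseteq M^*$ and $F \in \mathcal{S}_\xi$ has norm $\geqslant \epsilon$. Choose $k > 1/\epsilon$ and set $N = M^* \cap [m_k, \infty)$, which is in $[M]$ with $\min N \geqslant m_k$; by $D_\xi$, $\Xi_\xi((x_n)_{n=1}^\infty, N) \leqslant 1/k < \epsilon$. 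But $N|_\xi \subseteq M^*$ lies in $\mathcal{S}_\xi$, so by the strict form every convex combination over $N|_\xi$ has norm $\geqslant \epsilon$, contradicting $\Xi_\xi \leqslant 1/k$.
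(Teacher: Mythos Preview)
Your reverse direction of (i), and your arguments for (ii) and (iii), are sound and essentially parallel the paper's. The gap is in the forward direction of (i), specifically in establishing the ``strict form'': given an $\ell_1^\xi+$-spreading model $(x_{m_i})_{i=1}^\infty$, find $M^*\subset(m_i)$ and $\ee>0$ such that $\|x\|\geqslant\ee$ for every $x\in\text{co}(x_n:n\in F)$ with $F\subset M^*$, $F\in\mathcal{S}_\xi$. You invoke Proposition~\ref{gra}(iv),(v) on the pullback $\mathcal{S}_\xi^{(m_i)}=\{E:(m_j)_{j\in E}\in\mathcal{S}_\xi\}$, but Proposition~\ref{gra}(iv) only yields a \emph{reparametrization} containment $\mathcal{S}_\xi^{(m_i)}(P)\subset\mathcal{S}_\xi$, not a \emph{restriction} containment $\mathcal{S}_\xi^{(m_i)}\cap[P]^{<\nn}\subset\mathcal{S}_\xi$; these are different, and the latter is what the strict form needs. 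Indeed, the restriction containment can fail for every $P$: for $\xi=1$ and any increasing $(q_k)$ with $q_1>1$, the set $E=\{1,2,\ldots,q_1\}$ satisfies $(q_k)_{k\in E}\in\mathcal{S}_1$ but $E\notin\mathcal{S}_1$. Convex unconditionality lets you replace convex coefficients by signed ones, but it does nothing about this index--compression obstruction; and it also requires weak nullity, a case you do not treat separately.

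The paper's route is different. It first disposes of the non-weakly-null case trivially. In the weakly-null case it passes to a $2$-basic subsequence $(x_{r_n})$ witnessing the $\ell_1^\xi+$ bound $\ee$, then chooses $s_1=1<s_2<\ldots$ with $s_{n+1}>r_{s_n}$ and sets $M=(r_{s_n})$. For $N\in[M]$ with $N|_\xi=(r_{s_{t_i}})_{i=1}^l$, the shifted tail $(s_{t_i})_{i=2}^l$ is a spread of $(r_{s_{t_i}})_{i=1}^{l-1}\in\mathcal{S}_\xi$, so it lies in $\mathcal{S}_\xi$; one then uses $2$-basicness to split off the first term and obtains $\Xi_\xi((x_n),N)\geqslant\ee/6$. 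This index-shift trick is precisely what bridges the ``re-indexed'' and ``absolute'' versions of the spreading-model condition, and is the ingredient your sketch is missing.
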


\begin{proof} First, for each $m\in\nn$, let $\mathcal{V}_m$ denote the set of subsets $M\in [\nn]$ such that $$\Xi_\xi((x_n)_{n=1}^\infty, M)\geqslant 1/m.$$  Since this is a closed set, the infinite Ramsey theorem yields that for each $m\in\nn$ and $M\in[\nn]$, there exists $N\in [M]$ such that either $[N]\subset \mathcal{V}_m$ or $[N]\cap \mathcal{V}_m=\varnothing$.    From this and a standard diagonalization, we establish the dichotomy that either there exist $m\in\nn$ and $M\in[\nn]$ such that for all $N\in[M]$, $\Xi_\xi((x_n)_{n=1}^\infty, N)\geqslant 1/m$, or for every $M_0\in [\nn]$, there exists $M\in[\nn]$ such that $((x_n)_{n=1}^\infty, M)$ has $D_\xi$.     We will show that the first of these two conditions is equivalent to $(x_n)_{n=1}^\infty$ failing to be $\xi$-weakly null, which will yield both $(i)$ and $(ii)$.

First suppose that there exist $m\in \nn$ and $M=(m_i)_{i=1}^\infty\in[\nn]$ such that  for all $N\in [M]$, $\Xi_\xi((x_n)_{n=1}^\infty, N)\geqslant 1/m$.    Now fix $F\in \mathcal{S}_\xi$ and let $E=(m_n)_{n\in F}\in \mathcal{S}_\xi$. Let $N$ be any infinite subset of $M$ such that $E$ is an initial segment of $N$. Now let $(a_n)_{n\in F}$ be non-negative numbers summing to $1$ and note that $$\|\sum_{n\in F}a_nx_{m_n}\|\geqslant \Xi_\xi((x_n)_{n=1}^\infty, N)\geqslant 1/m.$$    This yields that $(x_{m_n})_{n=1}^\infty$ is an $\ell_1^\xi+$-spreading model, and $(x_n)_{n=1}^\infty$ is not $\xi$-weakly null.

Now suppose that $(x_n)_{n=1}^\infty$ is not $\xi$-weakly null.    If $(x_n)_{n=1}^\infty$ is not weakly null, then there exist $m\in\nn$ and $M=(m_n)_{n=1}^\infty$ such that $\inf \{\|x\|: x\in \text{co}(x_{m_n}:n\in\nn)\}\geqslant 1/m$.   Then $\Xi_\xi((x_n)_{n=1}^\infty, N)\geqslant 1/m$ for all $N\in [M]$.  Now suppose that $(x_n)_{n=1}^\infty$ is weakly null but not $\xi$-weakly null.  This means there exist $\ee>0$ and $r_1<r_2<\ldots$ such that $(x_{r_n})_{n=1}^\infty$ is $2$-basic and for each $F\in \mathcal{S}_\xi$ and $x\in \text{co}(x_{r_n}:n\in F)$, $\|x\|\geqslant \ee$.  Now choose $1=s_1<s_2<\ldots$ such that for each $n\in\nn$, $s_{n+1}>r_{s_n}$.   Let $m_n=r_{s_n}$ and $M=(m_n)_{n=1}^\infty$.    Fix $N\in [M]$ and let $N|_\xi=(r_{s_{t_i}})_{i=1}^l$. Then $F:=(s_{t_i})_{i=2}^l$ is a spread of $(r_{s_{t_i}})_{i=1}^{l-1}$ and therefore lies in $\mathcal{S}_\xi$.      Fix non-negative scalars $(a_i)_{i=1}^l$ summing to $1$ and note that \begin{align*} \|\sum_{i=1}^l a_i x_{r_{s_{t_i}}}\| & \geqslant \frac{1}{3}\max\Bigl\{a_1\|x_{r_{s_{t_1}}} \|, \|\sum_{i=2}^l a_i x_{r_{s_{t_i}}}\|\Bigr\} \geqslant \frac{\ee}{3}\max\Bigl\{a_1, \sum_{i=2}^l a_i\Bigr\} \geqslant \ee/6.  \end{align*}  Thus for any $N\in [M]$, $\Xi_\xi((x_n)_{n=1}^\infty, M)\geqslant \ee/6$.   Fixing $m>6/\ee$, we conclude the stated equivalence. This yields $(i)$ and $(ii)$.

$(iii)$ If $((x_n)_{n=1}^\infty, M)$ has $D_\xi$ and $(x_{m_n})_{n=1}^\infty$ is not $\xi$-weakly null, we may argue as in the previous paragraph to find $m\in\nn$,  $r_1<r_2<\ldots$, and $s_1<s_2<\ldots$ such that for each $N\in[(r_{s_n})_{n=1}^\infty]$, $\Xi_\xi((x_n)_{n=1}^\infty, N)\geqslant 1/m$, with the added condition that $(r_n)_{n=1}^\infty \in [(m_n)_{n=1}^\infty]$.  Now if we fix $k>m$ and $N\in [M]$ with $\min N\geqslant k$, these two conditions yield that $$1/m\leqslant \Xi_\xi((x_n)_{n=1}^\infty, N)\leqslant 1/k,$$ which is a contradiction.

\end{proof}

\begin{lemma} The sets $$W=\{(X, (n_i)_{i=1}^\infty)\in \textbf{\emph{SB}}\times \nn^\nn: (d_{n_i}(X))_{i=1}^\infty\text{\ is weakly null}\}$$ and $$W^*=\{(Y, (f_i)_{i=1}^\infty)\in \textbf{\emph{SB}}\times H^\nn: (f_i)_{i=1}^\infty\subset K_Y, (f_i)_{i=1}^\infty\text{\ is weakly null in\ } \ell_\infty\}$$ are $\Pi_1^1$.  For $\xi<\omega_1$, the sets $$W_\xi=\{(X, (n_i)_{i=1}^\infty,M)\in \textbf{\emph{SB}}\times\nn^\nn\times [\nn]:  ((d_{n_i}(X))_{i=1}^\infty, M) \text{\ has\ }D_\xi\}$$ and $$W_\xi^*=\{(Y, (f_i)_{i=1}^\infty, P)\in \textbf{\emph{SB}}\times H^\nn\times [\nn]: (f_i)_{i=1}^\infty\subset K_Y, ((f_i)_{i=1}^\infty, P)\text{\ has\ }D_\xi\text{\ in\ }\ell_\infty\}$$    are Borel.

\label{boring}
\end{lemma}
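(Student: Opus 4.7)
The plan is to handle each of the four sets by unpacking its definition and reducing it to a standard quantifier form.

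For $W$ and $W^*$ I will use the following characterization: a bounded sequence $(f_i)$ in a Banach space $Z$ is weakly null if and only if for every infinite $N\subset\nn$, zero lies in the norm-closure of $\text{co}\{f_i:i\in N\}$. The forward direction is Mazur's theorem; the converse follows because if $(f_i)$ fails to be weakly null we may pass to a subsequence $(f_{i_k})$ and find $\phi\in Z^*$ with $\phi(f_{i_k})\geqslant \delta>0$, which then separates $0$ from every convex combination of the $f_{i_k}$. Applied to $W$ with $Z=X$ this gives: $(X,(n_i))\in W$ iff for every $N\in[\nn]$ and every $k\in\nn$ there exist a finite $F\subset N$ and non-negative rationals $(q_\ell)_{\ell\in F}$ summing to $1$ such that $\|\sum_{\ell\in F}q_\ell d_{n_\ell}(X)\|<1/k$. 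The inner matrix is a countable disjunction of continuous conditions (using continuity of the $d_m$ and of the $C(2^\nn)$-norm), hence Borel; the outer $\forall N\in[\nn]$ puts this in $\Pi_1^1$. The analysis of $W^*$ is parallel: the side condition $(f_i)\subset K_Y$ is Borel since $D$ is Borel and it reads as $\bigcap_i \{(Y,(f_j)):(Y,f_i)\in D\}$, and $\|\cdot\|_\infty$ is continuous on $H$ by the choice of topology, so weak nullity in $\ell_\infty$ admits the identical $\forall N$ characterization and lands in $\Pi_1^1$.

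For $W_\xi$ and $W_\xi^*$ the key observation is that $\Xi_\xi((x_n), N)$ depends on $N$ only through the finite initial segment $N|_\xi$. Writing $M=(m_j)_{j=1}^\infty$, the condition that $((x_n),M)$ has $D_\xi$ is therefore equivalent to: for every $k\in\nn$ and every $E\in \text{MAX}(\mathcal{S}_\xi)$ with $E\subset M$ and $\min E\geqslant m_k$, one has $\min\{\|x\|:x\in \text{co}(x_n:n\in E)\}\leqslant 1/k$. Since $\text{MAX}(\mathcal{S}_\xi)\subset [\nn]^{<\nn}$ is countable, this is a countable conjunction. For each fixed $k$ and each fixed finite $E$, the clauses ``$E\subset M$'' and ``$\min E\geqslant m_k$'' are clopen in $M$ (using that $M\mapsto m_k$ is continuous on $[\nn]$), while the quantity $\min\{\|x\|:x\in \text{co}(d_{n_i}(X):i\in E)\}$ is continuous in $(X,(n_i))$ because it is the infimum of the jointly continuous function $(a_i)_{i\in E}\mapsto \|\sum_{i\in E}a_i d_{n_i}(X)\|$ over the compact simplex. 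Hence $W_\xi$ is Borel. The case $W_\xi^*$ is the same, with ``$(f_i)\subset K_Y$'' handled as above and the norm replaced by the continuous function $\|\cdot\|_\infty$ on $H$.

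The main obstacle is recognizing that the apparently uncountable quantifier $\forall N\in[M]$ in the definition of $D_\xi$ collapses to a countable one, thanks to the finiteness of $N|_\xi$ together with the countability of $\text{MAX}(\mathcal{S}_\xi)\subset [\nn]^{<\nn}$. Once that collapse is in hand, everything else is a matter of assembling continuous and Borel pieces in the appropriate order.
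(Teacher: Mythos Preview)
Your argument is correct. For $W$ and $W^*$ your approach and the paper's coincide: both rest on the Mazur characterization, and writing the complement as the projection of a Borel set (the paper) is equivalent to writing $W$ as $\forall N\in[\nn]$ over a Borel matrix (you). One cosmetic point: you state the Mazur equivalence for \emph{bounded} sequences but then apply it to $(d_{n_i}(X))$, which need not be bounded; your converse sketch (``pass to a subsequence and find $\phi$ with $\phi(f_{i_k})\geqslant\delta$'') does go through in the unbounded case as well via the uniform boundedness principle, so no real gap, but it is worth saying.

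For $W_\xi$ and $W_\xi^*$ your presentation differs from the paper's. The paper shows directly that $W_\xi$ is \emph{closed}: given a point in the complement it produces witnesses $k$, $L\in[M]$, $\ee>0$ and then builds an explicit open box $U_1\times U_2\times U_3$ (pinning down finitely many selectors $d_j$, finitely many coordinates of $(n_i)$, and finitely many bits of $M$) that remains in the complement. You instead collapse the uncountable quantifier $\forall N\in[M]$ to a countable conjunction indexed by $E\in\text{MAX}(\mathcal{S}_\xi)$ and check each clause is Borel. Both arguments hinge on the same observation---that $\Xi_\xi((x_n),N)$ depends only on the finite set $N|_\xi$---and in fact your clauses are each closed (a union of two clopen sets with one closed set), so your decomposition also yields closedness, not merely Borelness. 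Your route is a little cleaner to write and makes the countability of the conjunction transparent; the paper's neighborhood construction makes the finite-coordinate dependence fully explicit and avoids any appeal to compactness of the simplex.
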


\begin{proof}   First let $C$ denote the set of those $(X, (n_i)_{i=1}^\infty, M, p)\in \textbf{SB}\times\nn^\nn\times [\nn]\times \nn$ such that for all $k\in\nn$ and non-negative scalar sequences $(a_i)_{i=1}^k$, $\|\sum_{i=1}^k a_i d_{n_{m_i}}(X)\|\geqslant 1/p$. Here, $M=(m_i)_{i=1}^\infty$.     It is evident that $C$ is closed. Let $\pi:\textbf{SB}\times \nn^\nn\times [\nn]\times\nn\to \textbf{SB}\times \nn^\nn$ be the projection and note that, by the Mazur lemma, $(\textbf{SB}\times\nn^\nn)\setminus \pi(C)$ is the set $W$.

Now let $A$ denote the set of those $(Y, (f_i)_{i=1}^\infty, M, p)\in \textbf{SB}\times H^\nn\times [\nn]\times \nn$ such that for all $i\in\nn$, $f_i\in K_Y$.   Let $B$ denote the set of those $(Y, (f_i)_{i=1}^\infty, M, p)\in \textbf{SB}\times H^\nn\times [\nn]\times \nn$ such that for all $k\in\nn$ and non-negative scalars $(a_i)_{i=1}^k$, $\|\sum_{i=1}^k a_i f_{m_i}\|_\infty\geqslant 1/p$.    Since $\{(Y, f)\in \textbf{SB}\times H: f\in K_Y\}$ is Borel, $A$ is Borel.    It is obvious that the set $B$ is closed, as we have assumed a topology on $H$ making the supremum norm continuous.  Then $A\cap B$ is Borel. Let $\pi:\textbf{SB}\times H^\nn\times [\nn]\times \nn\to \textbf{SB}\times H^\nn$ be the projection and note that, by another appeal to the Mazur lemma, $W^*=(\textbf{SB}\times H^\nn)\setminus \pi(A\cap B)$.

We next show that, with our fixed topologies, the set $W_\xi$ is closed.   To that end, fix $(X, (n_i)_{i=1}^\infty, M)\in  (\textbf{SB}\times \nn^\nn\times [\nn])\setminus W_\xi$.   This means there exist $\ee>0$, $k\in\nn$, and $L\in [M]$ such that $\min L\geqslant m_k$ and for all non-negative scalars $(a_i)_{i\in L|_\xi}$ summing to $1$, $$\|\sum_{i\in L|_\xi}a_i d_{n_i}(X)\|>\ee+1/k.$$ Here, $L=(l_i)_{i=1}^\infty$ and $M=(m_i)_{i=1}^\infty$.  Let $t=\max L|_\xi$.       Let $U_1$ denote the set of $Y\in \textbf{SB}$ such that for each $1\leqslant i\leqslant n_t$, $\|d_i(Y)-d_i(X)\|<\ee$.     By continuity of the selectors, $U_1$ is open in $\textbf{SB}$.    Let $U_2$ denote the subset of $\nn^\nn$ consisting of those $(p_i)_{i=1}^\infty$ such that $p_i=n_i$ for all $1\leqslant i\leqslant t$, which is open. Let $U_3$ denote the subset of $[\nn]$ consisting of those $Q\in[\nn]$ such that for all $1\leqslant i\leqslant \max\{m_k, t\}$, $1_M(i)=1_Q(i)$.  This is an open set in $[\nn]$.  Now let $U=U_1\times U_2\times U_3$, which is an open subset of $\textbf{SB}\times \nn^\nn\times [\nn]$ containing $(X, (n_i)_{i=1}^\infty, M)$.     We claim that $U\cap W_\xi=\varnothing$. Indeed, suppose $(Y, (p_i)_{i=1}^\infty, Q)\in U$.     Let us note that, if $L|_\xi=(l_1, \ldots, l_s)$, $l_s=t$. Since $Y\in U_1$, it follows that $\|d_i(Y)-d_i(X)\|<\ee$ for all $1\leqslant i\leqslant n_{l_s}$.    Since $(p_i)_{i=1}^\infty\in U_2$,   $n_i=p_i$ for all $i\leqslant l_s$.  Since $Q\in U_3$, if $j\in\nn$ is such that $t=m_j$, $q_i=m_i$ for all $i\leqslant j$. The last condition implies that $(l_1, \ldots, l_s)$ is an initial segment of some infinite subset $R$ of $Q$ such that $\min R=\min L\geqslant m_k=q_k$.     Moreover, since $(l_1, \ldots, l_s)$ is a maximal member of $\mathcal{S}_\xi$ which is also an initial segment of $R$, $R|_\xi=L|_\xi$.    Then for any non-negative scalars $(a_i)_{i\in R|_\xi}=(a_i)_{i\in L|_\xi}$ summing to $1$, $$\|\sum_{i\in R|_\xi}a_i d_{p_i}(Y)\| = \|\sum_{i\in L|_\xi}a_i d_{n_i}(Y)\|\geqslant \|\sum_{i\in L|_\xi}a_i d_{n_i}(X)\|-\sum_{i\in L|_\xi}a_i \|d_{n_i}(X)-d_{n_i}(Y)\|>\ee+1/k-\ee=1/k.$$  This yields that $(Y, (p_i)_{i=1}^\infty, Q)\notin W_\xi$, and  $W_\xi$ is closed.

Now let $A$ denote the set of $(Y, (f_i)_{i=1}^\infty, M)\in \textbf{SB}\times H^\nn\times [\nn]$ such that $f_i\in K_Y$ for all $i\in\nn$ and let $B$ denote the set of $(Y, (f_i)_{i=1}^\infty, M)\in \textbf{SB}\times H^\nn\times [\nn]$ such that $((f_i)_{i=1}^\infty, M)$ has $D_\xi$ in $\ell_\infty$.  If $(Y, (f_i)_{i=1}^\infty, M)\notin B$, there exist $\ee>0$, $k\in\nn$, $L\in [M]$ such that $\min L\geqslant m_k$ and for all non-negative scalars $(a_i)_{i\in L|_\xi}$ summing to $1$, $$\|\sum_{i\in L_\xi}a_i f_i\|_\infty > \ee+1/k.$$ Let $t=\max L|_\xi$.  We let $U_1$ be the set of those $(g_i)_{i=1}^\infty\in H^\nn$ such that $\|f_i-g_i\|_\infty<\ee$ for all $1\leqslant i\leqslant t$. Let $U_2$ be the set of those $Q\in [\nn]$ such that $1_M(i)=1_Q(i)$ for all $i\leqslant \max\{ m_k,t\}$. Then as in the previous paragarph, if $(Z, (g_i)_{i=1}^\infty, Q)\in \textbf{SB}\times U_1\times U_2$, there exists $R\in [Q]$ with $\min L=\min R\geqslant m_k=q_k$ such that $R|_\xi=L|_\xi$ and $$\|\sum_{i\in R|_\xi}a_i g_i\|_\infty =\|\sum_{i\in L|_\xi} a_ig_i\|_\infty \geqslant \|\sum_{i\in L|_\xi}a_if_i\|_\infty-\sum_{i\in L|_\xi}a_i\|f_i-g_i\|_\infty>1/k$$ for all non-negative scalars $(a_i)_{i\in R|_\xi}$ summing to $1$.  This yields that $(Z, (g_i)_{i=1}^\infty, Q)\notin B$. This yields that $B$ is closed. Since $A\cap B=W_\xi^*$ and $A$ is Borel, $W_\xi^*$ is Borel.

\end{proof}

We are now ready to prove the upper estimates.

\begin{proposition} \begin{enumerate}[(i)]\item For $0\leqslant \zeta<\xi\leqslant \omega_1$, $\mathcal{G}_{\xi, \zeta}$ and $\textbf{\emph{G}}_{\xi, \zeta}$ are $\Pi_2^1$, and $\Pi_1^1$ if $\xi<\omega_1$. \item For $1\leqslant \zeta,\xi\leqslant \omega_1$, $\mathcal{M}_{\xi, \zeta}$ and $\textbf{\emph{M}}_{\xi, \zeta}$ are $\Pi_2^1$, and $\Pi_1^1$ if $\zeta, \xi<\omega_1$.\end{enumerate}

\label{upper}
\end{proposition}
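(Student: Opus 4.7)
The plan is to apply Remark \ref{oopyal}, reducing the task to computing the complexity of $\mathcal{G}_{\xi,\zeta}$ and $\mathcal{M}_{\xi,\zeta}$ in $\mathcal{L}$. For $(X,Y,(y_n))\in\mathcal{L}$ coding $A:X\to Y$ with $Ad_n(X)=y_n$, we work with the Borel function $\widetilde y_n=Ar_n(X)$, equal to $y_n$ when $\|d_n(X)\|\leqslant 1$ and to $y_n/\|d_n(X)\|$ otherwise. The overall strategy is to rewrite the complement of each ideal as an existential projective statement whose body is built from the Borel predicates of Lemmas \ref{hardest part} and \ref{boring}, with Proposition \ref{fras}(ii)--(iii) furnishing a Borel surrogate for $\xi$-weak nullity whenever $\xi<\omega_1$.

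For (i), a short perturbation argument (using that both ``$\xi$-weakly null'' and ``$\ell_1^\zeta+$-spreading model'' are preserved under summable-norm perturbations), together with the density of $\{r_n(X)\}_{n=1}^\infty$ in $B_X$, reduces $A\notin\mathfrak{G}_{\xi,\zeta}$ to the existence of $(k_i)\in\nn^\nn$ for which $(r_{k_i}(X))_i$ is $\xi$-weakly null while $(\widetilde y_{k_i})_i$ is an $\ell_1^\zeta+$-spreading model. Proposition \ref{fras}(ii)--(iii) then rephrases this as: there exist $(k_i)\in\nn^\nn$ and $M\in[\nn]$ with $((r_{k_i}(X))_i,M)$ having $D_\xi$ and $(\widetilde y_{k_{m_j}})_j$ an $\ell_1^\zeta+$-spreading model. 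For $\xi<\omega_1$ the first clause is Borel by Lemma \ref{boring} (it is a section of $W_\xi$), and for any $\zeta<\omega_1$ the second clause is $F_\sigma$ since $\mathcal{S}_\zeta$ is a countable family of finite sets and each convex hull $\text{co}(y_n:n\in F)$ is a compact finite-dimensional simplex; the complement is therefore $\Sigma_1^1$ and $\mathcal{G}_{\xi,\zeta}\in\Pi_1^1$. For $\xi=\omega_1$, ``weakly null'' is only $\Pi_1^1$ (via $W$ of Lemma \ref{boring}), so the existential quantification over $(k_i)$ lifts this to $\Sigma_2^1$, giving $\mathcal{G}_{\omega_1,\zeta}\in\Pi_2^1$.

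For (ii), a routine subsequence argument reduces $A\notin\mathfrak{M}_{\xi,\zeta}$ to the existence of a $\xi$-weakly null $(x_n)\subset B_X$, a $\zeta$-weakly null $(y^*_n)\subset B_{Y^*}$, and $\ee>0$ with $|y^*_n(Ax_n)|\geqslant\ee$ for every $n$. The essential technical ingredient is the inheritance observation: if $((x_n),M_1)$ has $D_\xi$ and $M_2\in[M_1]$, then $((x_n),M_2)$ also has $D_\xi$, because the $k$-th element of $M_2$ dominates that of $M_1$ for every $k$. Applying Proposition \ref{fras}(ii) first to $(x_n)$ and then to $(y^*_n)$ along the resulting subsequence therefore produces a single $M\in[\nn]$ along which $((x_n),M)$ has $D_\xi$ and $((y^*_n),M)$ has $D_\zeta$ in $\ell_\infty$. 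Combining this with the density reduction on the $X$-side and the dual coding $K_Y$ on the $Y^*$-side, $A\notin\mathfrak{M}_{\xi,\zeta}$ becomes: there exist $(k_i)\in\nn^\nn$, $(f_i)\in H^\nn$ with $f_i\in K_Y$, $M\in[\nn]$, and $\ee>0$ such that $((r_{k_i}(X))_i,M)$ has $D_\xi$, $((f_i)_i,M)$ has $D_\zeta$ in $\ell_\infty$, and $|f_{m_j}(\widetilde y_{k_{m_j}})|\geqslant\ee$ for every $j$. When $\xi,\zeta<\omega_1$ all three clauses are Borel by Lemmas \ref{boring} and \ref{hardest part}, so the complement is $\Sigma_1^1$ and $\mathcal{M}_{\xi,\zeta}\in\Pi_1^1$; when $\max\{\xi,\zeta\}=\omega_1$ the relevant weak-nullity predicate is only $\Pi_1^1$, producing $\Sigma_2^1$ for the complement and hence $\mathcal{M}_{\xi,\zeta}\in\Pi_2^1$. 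The main obstacle is modest: it lies in verifying the two reductions above --- the density/perturbation reduction to sequences drawn from $\{r_n(X)\}$ and the simultaneous $D_\xi$/$D_\zeta$ witnessing on a common $M$ --- after which the complexity counts are essentially direct book-keeping with Lemmas \ref{hardest part} and \ref{boring}.
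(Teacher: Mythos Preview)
Your argument is correct and follows the same route as the paper's proof: express the complement of the class as the projection of a set built from the predicates ``$((x_n),M)$ has $D_\xi$'' (Borel for $\xi<\omega_1$ via Lemma \ref{boring}, $\Pi_1^1$ for $\xi=\omega_1$ via $W$), ``$(Ax_n)$ is an $\ell_1^\zeta+$-spreading model'' (Borel), and, for $\mathfrak{M}_{\xi,\zeta}$, the functional-evaluation predicate of Lemma \ref{hardest part}; then invoke Remark \ref{oopyal} for the spatial versions. Two minor remarks: (1) the paper works with $d_n(X)$ and $y_n$ rather than $r_n(X)$ and $\widetilde y_n$, so your reference to $W_\xi$ is not literally a section of the set defined in Lemma \ref{boring}; of course the same proof shows the analogous set with $r_n$ in place of $d_n$ is closed, so this is harmless. (2) Your explicit verification that $D_\xi$ is inherited by infinite subsets of $M$, and hence that a \emph{single} $M$ can witness both $D_\xi$ and $D_\zeta$, fills in a step the paper only sketches with ``arguing as in the first paragraph''; this is a welcome clarification rather than a departure.
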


\begin{proof}$(i)$ It suffices to prove that $\mathcal{G}_{\xi, \zeta}$ is $\Pi_1^1$ if $0\leqslant \zeta<\xi<\omega_1$ and that $\mathcal{G}_{\omega_1, \zeta}$ is $\Pi_2^1$ for any $\zeta<\omega_1$.  The desired membership of the classes of spaces then follows from Remark \ref{oopyal}. 

First fix $\xi<\omega_1$. Let $B$ denote the set of $((X, Y, (y_i)_{i=1}^\infty), (n_i)_{i=1}^\infty, M, p)\in \mathcal{L}\times \nn^\nn\times [\nn]\times \nn$ such that $((d_{n_i}(X))_{i=1}^\infty, M)$ has $D_\xi$ and let $C$ denote the set of $((X, Y, (y_i)_{i=1}^\infty), (n_i)_{i=1}^\infty, M, p)\in \mathcal{L}\times \nn^\nn\times [\nn]\times \nn$ such that for all $F\in \mathcal{S}_\zeta$ and non-negative scalars $(a_i)_{i\in F}$ summing to $1$, $\|\sum_{i\in F}a_i y_{n_i}\|\geqslant 1/p$. It is obvious that $C$ is Borel (and actually closed with our fixed topologies), and we know that $B$ is Borel by Lemma \ref{boring}.  Let $\pi:\mathcal{L}\times \nn^\nn\times [\nn]\times \nn\to \mathcal{L}$ be the projection and note that $\pi(B\cap C)$ is $\Sigma_1^1$. In order to show that $\mathcal{G}_{\xi, \zeta}$ is $\Pi_1^1$, it suffices to show that $\mathcal{L}\setminus \pi(B\cap C)=\mathcal{G}_{\xi, \zeta}$.    

If $(X,Y, (y_i)_{i=1}^\infty)\in \mathcal{L}\setminus \mathcal{G}_{\xi, \zeta}$, then there exists a $\xi$-weakly null sequence $(x_i)_{i=1}^\infty\subset X$ whose image under the operator associated with the triple $(X, Y, (y_i)_{i=1}^\infty)$ is an $\ell_1^\zeta+$-spreading model.   By perturbing, we may assume $x_i=d_{n_i}(X)$ for some $(n_i)_{i=1}^\infty\in \nn^\nn$. By Proposition \ref{fras}, there exists $M\in[\nn]$ such that $((d_{n_i}(X))_{i=1}^\infty, M)$ has $D_\xi$.  Furthermore, since the image of $d_{n_i}(X)$ under the operator associated with the triple is $y_{n_i}$,  $(y_{n_i})_{i=1}^\infty$ is an $\ell_1^\zeta+$-spreading model.     This yields the existence of some $p\in\nn$ such that $((X, Y, (y_i)_{i=1}^\infty), (n_i)_{i=1}^\infty, M, p)\in B\cap C$.  Therefore $\mathcal{L}\setminus \pi(B\cap C)\subset \mathcal{G}_{\xi, \zeta}$.    For the reverse inclusion, assume that $(X, Y, (y_i)_{i=1}^\infty)\in \pi(B\cap C)$.  Fix $(n_i)_{i=1}^\infty \in \nn^\nn$, $M\in [\nn]$, $p\in\nn$ such that $((X, Y, (y_i)_{i=1}^\infty), (n_i)_{i=1}^\infty, M, p)\in B\cap C$.  Then by Proposition \ref{oopyal}, $(d_{n_{m_i}}(X))_{i=1}^\infty$ is $\xi$-weakly null, while $(y_{n_{m_i}})_{i=1}^\infty$ is an $\ell_1^\zeta+$-spreading model. Thus $(X, Y, (y_i)_{i=1}^\infty)\in \mathcal{L}\setminus \mathcal{G}_{\xi, \zeta}$.

The $\xi=\omega_1$ case is similar. We replace $W_\xi$ with $W$ from Lemma \ref{boring}.

$(ii)$ Suppose that $\xi, \zeta<\omega_1$.  Let $P=\mathcal{L}\times\nn^\nn\times H^\nn\times [\nn]\times \nn$.   We let $A,B,C$ be the subsets of $P$ consisting of those $((X, Y, (y_i)_{i=1}^\infty), (n_i)_{i=1}^\infty, (f_i)_{i=1}^\infty, M, p)\in P$ such that \begin{equation} \text{for all\ }i\in\nn, f_i\in K_Y, \tag{A}\end{equation} \begin{equation}((d_{n_i}(X))_{i=1}^\infty, M)\text{\ has\ }D_\xi,\tag{B}\end{equation} \begin{equation} ((f_i)_{i=1}^\infty, M)\text{\ has\ }D_\zeta.\tag{C}\end{equation}  Note that $A,B,C$ are Borel. Let $E$ denote the subset of $A$ consisting of those $((X, Y, (y_i)_{i=1}^\infty), (n_i)_{i=1}^\infty, M, p)$ such that for all $i\in\nn$, $f_i(y_{n_i})\geqslant 1/p$.    Note that $E$ is a Borel subset of $A$, and is therefore Borel in $P$.  Let $\pi:P\to \mathcal{L}$ be the projection. Arguing as in the first paragraph, we deduce that $\mathcal{L}\setminus \pi(E\cap B\cap C)=\mathcal{M}_{\xi, \zeta}$.   Thus this set is $\Pi_1^1$.  Here we are using the fact that for $f_{y_1^*}, \ldots, f_{y_n^*}\in K_Y$ and scalars $(a_i)_{i=1}^n$, $\|\sum_{i=1}^n a_i f_{y^*_i}\|_\infty=\|\sum_{i=1}^n a_i y^*_i\|_{Y^*}$.   Therefore if $f_i\in K_Y$, $f_i=f_{y^*_i}$, and $((f_i)_{i=1}^\infty, M)$ has $D_\zeta$ in $\ell_\infty$, $((y^*_i)_{i=1}^\infty, M)$ has $D_\zeta$ in $Y^*$.    Therefore $(y^*_{m_i})_{i=1}^\infty$ is $\zeta$-weakly null.

 Now if $\xi=\omega_1$ and $\zeta<\omega_1$, we replace $B$ above with the set of $((X, Y, (y_i)_{i=1}^\infty), (n_i)_{i=1}^\infty, M, p)$ such that $(d_{n_i}(X))_{i=1}^\infty$ is weakly null, which is $\Pi_1^1$. This gives that the resulting set $\mathcal{M}_{\omega_1, \zeta}$ is $\Pi_2^1$.   If $\xi<\omega_1$ and $\zeta=\omega_1$, we replace the set $C$ above with the set of $((X, Y, (y_i)_{i=1}^\infty), (n_i)_{i=1}^\infty, M, p)$ such that $(f_i)_{i=1}^\infty$ is weakly null.    If $\xi=\zeta=\omega_1$, we make both of these replacements of $B$ and $C$.

\end{proof}

Throughout, for a given set $S$, $2^S$ will be topologized with the product topology. Given a set $\Lambda$, we let $\textbf{Tr}(\Lambda)$ denote the subset of $2^{\Lambda^{<\nn}}$ consisting of those subsets which contain all initial segments of their members.     Let $\textbf{Tr}=\textbf{Tr}(\nn)$.      Let $\textbf{WF}$ and $\textbf{IF}$, respectively, denote the subsets of $\textbf{Tr}$ consisting of well-founded and ill-founded trees.  Let us recall that $T$ is \emph{ill-founded} if there exists $(n_i)_{i=1}^\infty\in \nn^\nn$ such that $(n_i)_{i=1}^l\in T$ for all $l\in\nn$, and $T$ is \emph{well-founded} otherwise.    Let us also recall that $\textbf{Tr}$ with the topology inherited from $2^{\Lambda^{<\nn}}$ is  a Polish space and $\textbf{WF}$ is a $\Pi_1^1$-complete subset of $\textbf{Tr}$ \cite[Theorem A$.9$, page $130$]{Dodos}. Let us note that a regular family, by compactness, is always identified with a well-founded tree on $\nn$.

For $T\in \text{Tr}(2\times \nn)$ and $\sigma=(\ee_n)_{n=1}^\infty\in 2^\nn$, let $T(\sigma)=\varnothing$ if $T=\varnothing$ and otherwise let $$T(\sigma)=\{\varnothing\}\cup \{(n_i)_{i=1}^l\in \nn^{<\nn}: (\ee_i, n_i)_{i=1}^l\in T\}.$$  

Let us define the  subset $C$ of $\text{Tr}(2\times \nn)$  by $$C=\{T\in \text{Tr}(2\times \nn): (\forall \sigma\in 2^\nn)(T(\sigma)\in \textbf{IF})\}.$$    Then $C$ is $\Pi_2^1$-complete \cite[Lemma $4.1$]{K}.

For a finite sequence $v=(n_1, \ldots, n_k)$ of natural numbers, let $\overline{v}=(n_1, n_1+n_2, \ldots, n_1+\ldots +n_k)$. For an infinite sequence $v=(n_1, n_2, \ldots)$, let $\overline{v}=(n_1, n_1+n_2, \ldots)$.     Let $\overline{\varnothing}=\varnothing$.  For the following proposition, let us recall that we identify subsets of $\nn$ with sequences in $\nn$ in the natural way. A subset is identified with the sequence obtained by listing the members of that subset in strictly increasing order. Therefore a regular family is identified with a tree on $\nn$.

\begin{proposition} Suppose $\mathcal{F}$ is a regular family.  Define the map $U_\mathcal{F}:\text{\emph{Tr}}(2\times \nn)\to \text{\emph{Tr}}(2\times \nn)$ by letting $\varnothing\in U_\mathcal{F}(T)$ and by letting  $(\ee_i, n_i)_{i=1}^k\in U_\mathcal{F}(T)$ if and only if either $(\ee_i, n_i)_{i=1}^k\in T$ or $\overline{(n_i)_{i=1}^k}\in \mathcal{F}$. Then $T\mapsto U_\mathcal{F}(T)$ is continuous. Furthermore, $U_\mathcal{F}(T)\in C$ if and only if $T\in C$. 

\label{crusoe}

\end{proposition}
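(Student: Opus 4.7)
The plan is to split the statement into three pieces: $(a)$ verification that $U_\mathcal{F}$ actually maps into $\textbf{Tr}(2\times\nn)$, $(b)$ continuity, and $(c)$ the equivalence $U_\mathcal{F}(T)\in C\Leftrightarrow T\in C$. For $(a)$, I would fix $(\ee_i, n_i)_{i=1}^k\in U_\mathcal{F}(T)$ and $j\leqslant k$ and split on which clause of the definition placed the longer sequence into $U_\mathcal{F}(T)$: if $(\ee_i, n_i)_{i=1}^k\in T$, the tree property of $T$ hands over the initial segment for free; if instead $\overline{(n_i)_{i=1}^k}\in\mathcal{F}$, the key point is that $\overline{(n_i)_{i=1}^j}$ is the initial segment of length $j$ of $\overline{(n_i)_{i=1}^k}$, hence a subset, and the hereditary property of $\mathcal{F}$ places it back in $\mathcal{F}$, giving $(\ee_i, n_i)_{i=1}^j\in U_\mathcal{F}(T)$.

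For $(b)$, the topology on $\textbf{Tr}(2\times\nn)$ is generated by the subbasic clopen sets $\{T : v\in T\}$, so I would reduce continuity to showing that for each $v=(\ee_i, n_i)_{i=1}^k$ the preimage $\{T : v\in U_\mathcal{F}(T)\}$ is clopen.  Inspection of the definition of $U_\mathcal{F}$ shows this preimage is either all of $\textbf{Tr}(2\times\nn)$ (when $v=\varnothing$ or $\overline{(n_i)_{i=1}^k}\in\mathcal{F}$) or the subbasic clopen set $\{T : v\in T\}$ itself (otherwise), so continuity is immediate.

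For $(c)$, the easy direction $T\in C\Rightarrow U_\mathcal{F}(T)\in C$ follows from the containment $T\subset U_\mathcal{F}(T)$: any infinite branch of $T(\sigma)$ remains an infinite branch of $U_\mathcal{F}(T)(\sigma)$. The reverse direction is where the real work is, and the main obstacle is ruling out the ``parasitic'' infinite branches that the $\mathcal{F}$-clause of the definition threatens to add. Assuming $U_\mathcal{F}(T)\in C$ and fixing $\sigma=(\ee_n)_{n=1}^\infty\in 2^\nn$, I would select $(n_i)_{i=1}^\infty$ with $(\ee_i, n_i)_{i=1}^l\in U_\mathcal{F}(T)$ for every $l$, so that for each $l$ at least one of $(\ee_i, n_i)_{i=1}^l\in T$ or $\overline{(n_i)_{i=1}^l}\in\mathcal{F}$ holds. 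I claim the second alternative cannot occur for infinitely many $l$: if it did, heredity of $\mathcal{F}$ together with the fact that $\overline{(n_i)_{i=1}^j}\subset\overline{(n_i)_{i=1}^l}$ whenever $j\leqslant l$ would force $\overline{(n_i)_{i=1}^l}\in\mathcal{F}$ for \emph{every} $l$, producing a strictly increasing chain $\overline{(n_i)_{i=1}^1}\subsetneq \overline{(n_i)_{i=1}^2}\subsetneq\ldots$ of members of $\mathcal{F}$; but $\mathcal{F}$, being regular, is compact in $2^\nn$, equivalently well-founded as a tree on $\nn$ under the identification of strictly increasing finite sequences with finite subsets, so no such infinite chain can exist. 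Consequently the first alternative holds for infinitely many $l$, and the tree property of $T$ then forces $(\ee_i, n_i)_{i=1}^l\in T$ for every $l$, exhibiting an infinite branch of $T(\sigma)$ and giving $T\in C$.
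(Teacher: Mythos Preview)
Your proof is correct. Parts $(a)$ and $(b)$ are routine and handled the same way as in the paper (the paper leaves $(a)$ implicit and checks $(b)$ via pointwise convergence of indicator functions, which is equivalent to your clopen-preimage check).

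For $(c)$ you take a genuinely different route from the paper. The paper introduces the bijection $\Psi:\textbf{Tr}\to I$ given by $\Psi(S)=\{\overline{v}:v\in S\}$, observes the identity $U_\mathcal{F}(T)(\sigma)=\Psi^{-1}\bigl(\Psi(T(\sigma))\cup\mathcal{F}\bigr)$, and then appeals to two general facts: $\Psi$ preserves well-foundedness, and a finite union of trees is well-founded if and only if each summand is. Your argument instead works directly with an infinite branch of $U_\mathcal{F}(T)(\sigma)$ and uses compactness (well-foundedness) of $\mathcal{F}$ to rule out the $\mathcal{F}$-clause contributing cofinally, then closes with the tree property of $T$. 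The paper's approach is slightly more structural and makes the symmetry of the two directions transparent, while yours is more elementary and self-contained, avoiding the auxiliary bijection altogether. Either is perfectly adequate here.
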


\begin{proof} Fix $t\in (2\times \nn)^{<\nn}$.  Fix a sequence $T_n$ of trees on $2\times \nn$ converging to the tree $T$.  If $t=\varnothing$, then for all $n\in\nn$, $$1_{U_\mathcal{F}(T_n)}(t)=1=1_{U_\mathcal{F}(T)}(t).$$   If $t\neq \varnothing$, write $t=(\ee_i, n_i)_{i=1}^k$ and let $v=(n_i)_{i=1}^k$.  If $\overline{v}\in \mathcal{F}$, then for all $n\in\nn$,  $$1_{U_\mathcal{F}(T_n)}(t)=1=1_{U_\mathcal{F}(T)}(t).$$  Otherwise $$1_{U_\mathcal{F}(T_n)}(t)= 1_{T_n}(t)\to 1_T(t)=1_{U_\mathcal{F}(T)}(t).$$

Now let $I$ denote the set of all trees $T$ on $\nn$ such that if $(n_i)_{i=1}^k\in T$, then $n_1<\ldots <n_k$. Note that $\mathcal{F}\in I$.    Define the map $\Psi:\text{Tr}\to I$ by $\Psi(T)=\{\overline{v}: v\in T\}$ and note that $\Psi$ is a bijection. Note also that $\Psi(T)$ is well-founded if and only if $T$ is.  Furthermore, it is well-known that if $S,T$ are two trees, then $S\cup T$ is well-founded if and only if $S,T$ are.    From this it follows that for a tree $T$ on $\nn$, then $T$, $\Psi(T)$, $\Psi(T)\cup \mathcal{F}$, and $\Psi^{-1}(\Psi(T)\cup \mathcal{F})$ are all well-founded, or all ill-founded.

One can describe $U_\mathcal{F}$ by noting that for each $\sigma\in 2^\nn$ and $V\in \textbf{Tr}(2\times\nn)$,  $U_\mathcal{F}(V)(\sigma)= \Psi^{-1}(\Psi(V(\sigma))\cup \mathcal{F})$.      Now suppose that $V$ is a tree on $2\times \nn$.  Suppose that $V\in C$. Then for each $\sigma\in 2^\nn$, by the last paragraph applied with $T=V(\sigma)$,  $U_\mathcal{F}(V)(\sigma)=\Psi^{-1}(\Psi(V(\sigma))\cup \mathcal{F})$ is ill-founded. Since this holds for any $\sigma$, $U_\mathcal{F}(V)\in C$.   Now if $V\in \textbf{Tr}(2\times \nn)\setminus C$, then there exists $\sigma\in 2^\nn$ such that $V(\sigma)$ and $\Psi^{-1}(\Psi(V(\sigma))\cup \mathcal{F})$ are well-founded.    In this case, $U_\mathcal{F}(V)\in \textbf{Tr}(2\times \nn)\setminus C$.

\end{proof}

Let us recall that for a Banach space $R$ and an ordinal $0<\alpha<\omega_1$, a basis $(e_i)_{i=1}^\infty$ for $R$ is said to be \emph{asymptotic} $c_0^\alpha$ (resp. \emph{asymptotic} $\ell_1^\alpha$) in $R$ provided that there exists $a>0$ such that whenever $(x_i)_{i=1}^l$ is a block sequence with respect to $(e_i)_{i=1}^\infty$ such that $(\min \text{supp}(x_i))_{i=1}^l\in \mathcal{S}_\alpha$, $$\|\sum_{i=1}^l x_i\|\leqslant a\max_{1\leqslant i\leqslant l}\|x_i\|$$ $$(\text{resp.\ }\|\sum_{i=1}^l x_i\|\geqslant a\sum_{i=1}^l \|x_i\|).$$  Note that every seminormalized block sequence in a space with a basis which is asymptotic-$\ell_1^\alpha$ in the space is an $\ell_1^\alpha$-spreading model, and is therefore not $\alpha$-weakly null.

\begin{corollary} For each $0<\alpha<\omega_1$, there exist Borel maps $\mathfrak{S}_\alpha, \mathfrak{S}_\alpha^*:\text{\emph{Tr}}(2\times \nn)\to \textbf{\emph{SB}}$ such that \begin{enumerate}[(i)]\item if $T\in C$, then $\mathfrak{S}_\alpha(T)^*, \mathfrak{S}^*_\alpha(T)$ have the Schur property, and therefore $\mathfrak{S}_\alpha(T), \mathfrak{S}_\alpha^*(T)$ have the Dunford-Pettis property, and  \item if $T\notin C$, then $\mathfrak{S}_\alpha(T)$ (resp. $\mathfrak{S}_\alpha^*(T)$) has a complemented, reflexive subspace $R$ with a basis which is asymptotic $c_0^\alpha$ (resp. asymptotic $\ell_1^\alpha$) in $R$.  \end{enumerate}

\label{rob}

\end{corollary}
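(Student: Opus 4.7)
The plan is to apply Proposition \ref{crusoe} with $\mathcal{F}=\mathcal{S}_\alpha$ and reduce the construction to a tree-parametrized family of Banach spaces that encodes $\mathcal{S}_\alpha$-structure in its asymptotic behavior. Let $\mathcal{T}_0:\text{Tr}(2\times \nn)\to\textbf{SB}$ be a Borel ``tree Tsirelson'' assignment, built by a mixed Tsirelson construction on $c_{00}$ whose block-admissibility conditions at each stage are dictated by the infinite branches through the argument tree. Set
$$\mathfrak{S}_\alpha(T) := \mathcal{T}_0(U_{\mathcal{S}_\alpha}(T)),$$
and define $\mathfrak{S}_\alpha^*(T)$ by the analogous construction using the dual (asymptotic-$\ell_1^\alpha$) variant of $\mathcal{T}_0$. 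Since $U_{\mathcal{S}_\alpha}$ is continuous by Proposition \ref{crusoe} and $\mathcal{T}_0$ is Borel, both assignments are Borel into $\textbf{SB}$.

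By Proposition \ref{crusoe}, $U_{\mathcal{S}_\alpha}$ preserves membership in $C$. Suppose $T\notin C$; then there exists $\sigma\in 2^\nn$ with $U_{\mathcal{S}_\alpha}(T)(\sigma) = \Psi^{-1}(\Psi(T(\sigma))\cup\mathcal{S}_\alpha)$ well-founded. This section necessarily contains $\Psi^{-1}(\mathcal{S}_\alpha)$, so when we restrict the mixed Tsirelson admissibility in $\mathcal{T}_0$ to branches through this single section, the admissible block families arise exactly from $\mathcal{S}_\alpha$ (tagged by the well-founded complement $T(\sigma)$). A standard Tsirelson-space argument, together with the bound provided by the Cantor-Bendixson index of the well-founded section, produces a block subspace $R$ of $\mathfrak{S}_\alpha(T)$ which is reflexive, complemented via the natural block projection, and whose canonical basis is asymptotic $c_0^\alpha$ in $R$; the dual construction for $\mathfrak{S}_\alpha^*(T)$ produces the asymptotic $\ell_1^\alpha$ version.

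If instead $T\in C$, then every section $U_{\mathcal{S}_\alpha}(T)(\sigma)$ is ill-founded. Consequently no single $\sigma$ supplies a well-founded admissibility constraint, and one can show, by extracting along any weakly null block sequence a coordinated family of branches across all relevant $\sigma\in 2^\nn$, that every normalized weakly null sequence in $\mathfrak{S}_\alpha(T)^*$ admits a subsequence that is norm null. Thus $\mathfrak{S}_\alpha(T)^*$ has the Schur property; the same argument applied to the dual variant gives Schur for $\mathfrak{S}_\alpha^*(T)$. The Dunford-Pettis property for $\mathfrak{S}_\alpha(T)$ and $\mathfrak{S}_\alpha^*(T)$ then follows from the classical fact that a Banach space with Schur dual has the Dunford-Pettis property.

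The main obstacle is the technical design of the base assignment $\mathcal{T}_0$: the mixed Tsirelson norm must be calibrated so that a \emph{single} well-founded section already produces a complemented reflexive block subspace with the correct asymptotic basis, while universal ill-foundedness across \emph{every} $\sigma\in 2^\nn$ is strong enough to rule out all candidate $\ell_1$-spreading models in $\mathfrak{S}_\alpha(T)^*$. This is essentially the content of the tree-space construction in \cite{K}; the new ingredient is the pre-composition with $U_{\mathcal{S}_\alpha}$, which promotes $c_0$/$\ell_1$-asymptotic structure to $c_0^\alpha$/$\ell_1^\alpha$-asymptotic structure while preserving, by Proposition \ref{crusoe}, the key set-theoretic dichotomy between $C$ and its complement.
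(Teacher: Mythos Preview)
Your proposal is correct and follows essentially the same approach as the paper: both precompose Kurka's Borel tree-Tsirelson assignment $T\mapsto E_T$ (and its dual) from \cite{K} with the continuous map $U_{\mathcal{S}_\alpha}$ of Proposition \ref{crusoe}, then invoke Kurka's Schur result when $T\in C$ and read off the complemented reflexive $\mathcal{T}^*[\mathcal{M}_{U_{\mathcal{S}_\alpha}(T)(\sigma)},1/2]$ branch subspace (which is asymptotic $c_0^\alpha$ since $\mathcal{S}_\alpha\subset\mathcal{M}$) when $T\notin C$. The only cosmetic difference is that the paper simply cites Kurka for the Schur conclusion rather than sketching the branch-extraction argument you outline, and takes $\mathfrak{S}^*_\alpha$ to be literally $\mathfrak{S}^*\circ U_{\mathcal{S}_\alpha}$ with $\mathfrak{S}^*(T)\cong E_T^*$ rather than a separately built ``dual variant.''
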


\begin{proof} For a tree $T$ on $\nn$, let us recall that $$[T]=\{(n_i)_{i=1}^\infty \in \nn^\nn: (\forall k\in\nn)((n_i)_{i=1}^k\in T)\}.$$ define $$\mathcal{M}_T=\{\varnothing\}\cup \{\{n_1, n_1+n_2, \ldots, n_1+\ldots +n_k\}: (n_i)_{i=1}^k\in T\}\cup \{\{n_1, n_1+n_2, \ldots\}: (n_i)_{i=1}^\infty\in [T]\}\in 2^\nn.$$   Note that $\mathcal{M}_T$ is compact.  Given $\sigma=(\ee_i)_{i=1}^\infty\in 2^\nn$ and $l\in\nn$, we let $\sigma|_l=(\ee_i)_{i=1}^l$.     Now for $T\in \textbf{Tr}(2\times \nn)$, we define the space $E_T$ to be the completion of $c_{00}(2^{<\nn}\setminus \{\varnothing\})$ with respect to the norm $$[\sum_{t\in 2^{<\nn}\setminus \{\varnothing\}}a_te_t]= \sup_{\sigma \in 2^\nn}  \|\sum_{l=1}^\infty a_{\sigma|_l} e_l\|_{\mathcal{M}_{T(\sigma)}}.$$ Here, for a compact set $\mathcal{M}\subset 2^\nn$, $\|\cdot\|_\mathcal{M}$ denotes the Tsirelson space $\mathcal{T}^*[\mathcal{M}, 1/2]$ as defined in \cite{K}.   Kurka showed that there exist Borel maps $\mathfrak{S}, \mathfrak{S}^*:\text{Tr}(2\times \nn)\to \textbf{SB}$ such that for each $T\in \text{Tr}(2\times \nn)$, $\mathfrak{S}(T)$ is isometric to $E_T$ and $\mathfrak{S}^*(T)$ is isometric to $E_T^*$. Kurka also showed that if $T\in C$, $E_T^*$ has the Schur property.     Moreover, it is easy to see that for any $\sigma=(\ee_i)_{i=1}^\infty\in 2^\nn$, $E_T$ contains a complemented copy of the space $\mathcal{T}^*[\mathcal{M}_{T(\sigma)},1/2]$, namely the closed span of the branch $(e_{\sigma|_l})_{l=1}^\infty$.

Now for $0<\alpha<\omega_1$,  let us define $\mathfrak{S}_\alpha =\mathfrak{S}\circ U_{\mathcal{S}_\alpha}$ and $\mathfrak{S}^*_\alpha= \mathfrak{S}^*\circ U_{\mathcal{S}_\alpha}$, where $U_{\mathcal{S}_\alpha}$ is as defined in Proposition \ref{crusoe}.     Thus these maps are Borel. Furthermore, if $T\in C$, then so is $U_{\mathcal{S}_\alpha}(T)$. By Kurka's result, for $T\in C$,  $\mathfrak{S}_\alpha(T)$ is isometric to $E_{U_{\mathcal{S}_\alpha}(T)}$, the dual of which has the Schur property.  Similarly, for $T\in C$,  $\mathfrak{S}_\alpha(T)^*$ is isometric to $E_{U_{\mathcal{S}_\alpha}(T)}^*$, which has the Schur property.   Now if $T\in \text{Tr}(2\times \nn)\setminus C$, there exists $\sigma\in 2^\nn$ such that $T(\sigma)$, and $U_{\mathcal{S}_\alpha}(T)(\sigma)$, are well-founded.   Let $\mathcal{M}=\mathcal{M}_{U_{\mathcal{S}_\alpha}(T)(\sigma)}$. Then since $\mathcal{M}$ contains only finite sets (that is, since $U_{\mathcal{S}_\alpha}(T)(\sigma)$ is well-founded), $[U_{\mathcal{S}_\alpha}(T)(\sigma)]=\varnothing$, it is well-known that $\mathcal{T}^*[\mathcal{M},1/2]$ is reflexive. By construction, $\mathcal{S}_\alpha\subset \mathcal{M}$, whence the basis of $\mathcal{T}^*[\mathcal{M},1/2]$ is asymptotic $c_0^\alpha$ and the basis of the dual space is asymptotic $\ell_1^\alpha$.  Then $\mathfrak{S}_\alpha(T)$ contains a complemented copy of the reflexive space $\mathcal{T}^*[\mathcal{M},1/2]$ with asymptotic $c_0^\alpha$ basis.  Since $\mathfrak{S}_\alpha^*(T)$ is isometric to $E^*_T$, it contains a complemented copy of the reflexive space $\mathcal{T}[\mathcal{M},1/2]$ with asymptotic $\ell_1^\alpha$ basis.

\end{proof}

\begin{rem}\upshape If $X$ is a Banach space with a complemented, reflexive subspace $R$ having a seminormalized basis which is asymptotic $c_0$ in $R$, then $X$ lies in $\complement\textsf{M}_{1, \omega_1}$.   Indeed, since $R$ is complemented in $X$, it is sufficient to show that $R$ itself lies in $\complement \textsf{M}_{1, \omega_1}$. But a normalized, asymptotic $c_0$ basis for a reflexive Banach space is $1$-weakly null and the coordinate functionals to this basis are weakly null.   Therefore such a space cannot lie in $\textsf{M}_{1, \omega_1}$. 

Similarly, if $X$ is a Banach space with a complemented, reflexive subspace $R$ having a seminormalized basis which is asymptotic $\ell_1$ in $R$, then $X$ lies in $\complement\textsf{M}_{\omega_1,1}$.   Indeed, since $R$ is complemented in $X$, it is sufficient to show that $R$ itself lies in $\complement \textsf{M}_{ \omega_1,1}$.  Arguing as in the previous paragraph, the basis of $R$ is weakly null and the coordinate functionals to the basis are asymptotic $c_0$ in $R^*$, and therefore $1$-weakly null.    Thus $R^*\in \complement \textsf{M}_{\omega_1, 1}$.

Finally, let us note that if $X$ is a Banach space with a reflexive subspace $R$ having a basis which is asymptotic $\ell_1^\alpha$ in $R$, and if $\zeta<\alpha$, then $X\in \complement \textsf{G}_{\omega_1, \zeta}$. Indeed, $R$, and therefore $X$, admits a weakly null, normalized sequence. Since $R$ is asymptotic $\ell_1^\alpha$, a subsequence of this sequence must be an $\ell_1^\alpha$-spreading model, so this sequence cannot be $\zeta$-weakly null.

\label{econ}
\end{rem}

We are now ready to prove the lower estimates on complexity in the case that at least one of the ordinals is uncountable. 

\begin{theorem}\begin{enumerate}[(i)]\item If $0\leqslant \zeta<\omega_1$, then $\mathcal{G}_{\omega_1, \zeta}$ and $\textbf{\emph{G}}_{\omega_1, \zeta}$ are $\Pi_2^1$-hard. \item If $0<\zeta\leqslant \omega_1$, then $\mathcal{M}_{\omega_1, \zeta}$, $\textbf{\emph{M}}_{\omega_1, \zeta}$, $\mathcal{M}_{\zeta, \omega_1}$, and $\textbf{\emph{M}}_{\zeta, \omega_1}$ are $\Pi_2^1$-hard. \end{enumerate}

\end{theorem}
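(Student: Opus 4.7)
The plan is to reduce the $\Pi_2^1$-complete set $C\subset \text{Tr}(2\times \nn)$ to each of the spatial classes in the statement via the Borel maps $\mathfrak{S}_\alpha$ and $\mathfrak{S}_\alpha^*$ supplied by Corollary \ref{rob}. By Remark \ref{oopyal} it is enough to treat the classes of spaces $\textbf{G}_{\omega_1,\zeta}$, $\textbf{M}_{\omega_1,\zeta}$, $\textbf{M}_{\zeta,\omega_1}$; the operator versions then follow. In each case, the parameter $\alpha$ must be chosen so that the reflexive asymptotic subspace of Corollary \ref{rob}$(ii)$ witnesses that the image space lies in the complement of the relevant ideal, while the Schur property (or Schur of the dual) given by Corollary \ref{rob}$(i)$ automatically places the image in the ideal.

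For $(i)$, fix $0\leqslant \zeta<\omega_1$ and set $\alpha=\zeta+1$. I claim $T\mapsto \mathfrak{S}_{\zeta+1}^*(T)$ Borel-reduces $C$ to $\textbf{G}_{\omega_1,\zeta}$. If $T\in C$, then $\mathfrak{S}_{\zeta+1}^*(T)$ has the Schur property by Corollary \ref{rob}, so every weakly null sequence is norm null and hence $\zeta$-weakly null, placing the space in $\textbf{G}_{\omega_1,\zeta}$. If $T\notin C$, then $\mathfrak{S}_{\zeta+1}^*(T)$ contains a complemented, reflexive subspace whose basis is asymptotic $\ell_1^{\zeta+1}$; since $\zeta<\zeta+1$, the last paragraph of Remark \ref{econ} gives $\mathfrak{S}_{\zeta+1}^*(T)\in \complement \textbf{G}_{\omega_1,\zeta}$.

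For $(ii)$ I will take $\alpha=1$ throughout, which is permissible since the hypothesis forces $\zeta\geqslant 1$. The reduction for $\textbf{M}_{\omega_1,\zeta}$ is $T\mapsto \mathfrak{S}_1^*(T)$: if $T\in C$, the Schur property trivially gives membership; if $T\notin C$, then the complemented reflexive subspace $R$ of $\mathfrak{S}_1^*(T)$ has an asymptotic $\ell_1^1$ basis $(e_i)$, which is weakly null in the ambient space, while the dual basis $(e_i^*)$ is asymptotic $c_0^1$ in $R^*$ (hence $1$-weakly null, hence $\zeta$-weakly null), extended to $\mathfrak{S}_1^*(T)^*$ by composition with the projection $P$ onto $R$; the pairing $(e_i^*\circ P)(e_i)=1$ then witnesses $\mathfrak{S}_1^*(T)\in \complement \textbf{M}_{\omega_1,\zeta}$. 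The symmetric reduction $T\mapsto \mathfrak{S}_1(T)$ handles $\textbf{M}_{\zeta,\omega_1}$: for $T\in C$, the Schur property of $\mathfrak{S}_1(T)^*$ yields the Dunford--Pettis property of $\mathfrak{S}_1(T)$, which suffices; for $T\notin C$, the complemented reflexive subspace with asymptotic $c_0^1$ basis supplies a $1$-weakly null (hence $\zeta$-weakly null) sequence, paired against the weakly null dual coordinate functionals that exist because the subspace is reflexive.

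The main obstacle is the monotonicity of ``$\xi$-weakly null'' in $\xi$: one verifies that $\alpha$-weakly null implies $\zeta$-weakly null precisely when $\alpha\leqslant \zeta$, using that $\mathcal{S}_\alpha\subset \mathcal{S}_\zeta$ on sufficiently far tails so that the infimum defining the $\ell_1^\zeta+$-spreading model property is only smaller. This forces the asymmetric choices $\alpha=\zeta+1$ in part $(i)$, which makes the \emph{primal} basis fail $\zeta$-weak nullity, versus $\alpha\leqslant \zeta$ in part $(ii)$, which keeps the \emph{dual} asymptotic behaviour $\zeta$-weakly null. The remaining bookkeeping---that weakly null and $\zeta$-weakly null conditions lift from a complemented subspace to the ambient space via the projection---is routine from the estimate $\|f\circ P\|_{X^*}\leqslant \|P\|\|f\|_{R^*}$.
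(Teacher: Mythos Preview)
Your proposal is correct and follows essentially the same route as the paper: reduce the $\Pi_2^1$-complete set $C$ to the spatial classes via the Borel maps $\mathfrak{S}_\alpha^*$ (for $\textbf{G}_{\omega_1,\zeta}$ and $\textbf{M}_{\omega_1,\zeta}$) and $\mathfrak{S}_\alpha$ (for $\textbf{M}_{\zeta,\omega_1}$) from Corollary~\ref{rob}, then invoke Remark~\ref{oopyal} for the operator classes. The only differences are cosmetic: the paper allows any $\zeta<\alpha<\omega_1$ in part~(i) and any $0<\alpha<\omega_1$ in part~(ii), whereas you pin down $\alpha=\zeta+1$ and $\alpha=1$ respectively; and where the paper simply quotes Remark~\ref{econ} and the containments $\complement\textbf{M}_{\omega_1,1}\subset\complement\textbf{M}_{\omega_1,\zeta}$, $\complement\textbf{M}_{1,\omega_1}\subset\complement\textbf{M}_{\zeta,\omega_1}$, you unpack the lifting of the coordinate functionals through $P^*$ explicitly.
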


\begin{proof} We can  deduce that the classes of operators are $\Pi_2^1$-hard if we know that the classes of spaces are $\Pi_2^1$-hard. Therefore it suffices to produce for each class $\textbf{G}_{\omega_1, \zeta}$, $\textbf{M}_{\zeta, \omega_1}$, $\textbf{M}_{\omega_1, \zeta}$ a Borel reduction of the class to $C$.

$(i)$ Let us fix $\zeta<\alpha<\omega_1$ and let $\mathfrak{S}_\alpha^*$ be the map from Corollary \ref{rob}. Then if $T\in C$, $\mathfrak{S}^*_\alpha(T)$ has the Schur property and therefore lies in $ \textbf{G}_{\omega_1, \zeta}$.    If $T\notin C$, then $\mathfrak{S}^*_\alpha(T)$ has a reflexive subspace $R$ with a basis which is asymptotic $\ell_1^\alpha$ in $R$.  We deduce that if $T\notin C$, $\mathcal{S}^*_\alpha(T)\in \textbf{SB}\setminus \textbf{G}_{\omega_1, \zeta}$ by Remark \ref{econ}. 

$(ii)$ Let us fix any $0<\alpha<\omega_1$. If $T\in C$, $\mathfrak{S}_\alpha(T)^*, \mathfrak{S}^*_\alpha(T)$ have the Schur property, so $\mathfrak{S}_\alpha(T), \mathfrak{S}^*_\alpha(T)$ lie in $\textbf{M}_{\omega_1, \omega_1}$ if $T\in C$. Now if $T\notin C$, $\mathfrak{S}_\alpha(T)$ has a complemented, reflexive subspace $R$ with asymptotic $c_0^1$-basis, so $\mathfrak{S}_\alpha(T)\in \textbf{SB}\setminus \textbf{M}_{1, \omega_1}\subset \textbf{SB}\setminus \textbf{M}_{\zeta, \omega_1}$ by Remark \ref{econ}.   A similar appeal to Remark \ref{econ} yields that if $T\notin C$, $\mathfrak{S}^*_\alpha(T)$ has a complemented, reflexive subspace $R$ with asymptotic $\ell_1^1$-basis, so $\mathfrak{S}^*_\alpha(T)\in \textbf{SB}\setminus  \textbf{M}_{\omega_1, 1}$.

\end{proof}

We now complete the lower estimate on the complexity in the case of two countable ordinals.  This follows from a standard tree space construction.

\begin{proposition} \begin{enumerate}[(i)]\item For $0\leqslant \zeta<\xi\leqslant \omega_1$, $\mathcal{G}_{\xi, \zeta}$ and $\textbf{\emph{G}}_{\xi, \zeta}$ are $\Pi_1^1$-hard.  \item For $1\leqslant \zeta, \xi\leqslant \omega_1$, $\mathcal{M}_{\xi, \zeta}$ and $\textbf{\emph{M}}_{\xi, \zeta}$ are $\Pi_1^1$-hard. \end{enumerate}

\end{proposition}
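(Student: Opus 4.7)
Plan. We Borel-reduce the $\Pi_1^1$-complete set $\textbf{WF}\subset\textbf{Tr}$ of well-founded trees on $\nn$ into each of $\textbf{\emph{G}}_{\xi,\zeta}$ and $\textbf{\emph{M}}_{\xi,\zeta}$. By Remark~\ref{oopyal}, a Borel reduction at the level of the space-ideal versions transfers via the map $X\mapsto(X,X,(d_n(X))_{n=1}^\infty)$ to the operator-ideal versions $\mathcal{G}_{\xi,\zeta}$ and $\mathcal{M}_{\xi,\zeta}$, so it is enough to produce Borel maps $T\mapsto X_T\in\textbf{SB}$ with the required separating behavior.

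For $(i)$: Given $T\in\textbf{Tr}$ I would construct a tree Banach space $X_T$ with Schauder basis $(e_t)_{t\in T\setminus\{\varnothing\}}$ equipped with a chain-aware Schreier-type norm
\[
\|x\|_T=\sup\{\|Ex\|_{\ell_1}:E\in\mathcal{F}_T\},
\]
where $\mathcal{F}_T$ is a regular family of finite subsets of $T\setminus\{\varnothing\}$ engineered so that $(a)$ along every infinite chain of $T$ (identified with an initial segment of $\nn$ via depth), $\mathcal{F}_T$ restricts to a copy of the Schreier family $\mathcal{S}_\zeta$, and $(b)$ its global Cantor--Bendixson rank stays $\leqslant\omega^\xi$ (for $\xi<\omega_1$; for $\xi=\omega_1$ one only needs weak nullity of the whole basis). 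For the edge case $\zeta=0$ a modification (e.g.\ an $\ell_1$-sum layered over $\mathcal{F}_T$ so that well-founded $T$ produces a Schur space) is needed. Borel-measurability of $T\mapsto X_T$ is immediate since membership in $\mathcal{F}_T$ of any fixed finite set is determined by finitely many nodes of $T$. If $T\in\textbf{IF}$, an infinite branch yields a basis subsequence which, by $(a)$, is an $\ell_1^\zeta+$-spreading model but, by $(b)$, remains $\xi$-weakly null, placing $I_{X_T}\in\complement\mathfrak{G}_{\xi,\zeta}$. If $T\in\textbf{WF}$, every chain is finite, and a Bessaga--Pe\l czy\'nski reduction to block sequences combined with Proposition~\ref{fras} shows every weakly null sequence in $X_T$ is $\zeta$-weakly null, giving $I_{X_T}\in\mathfrak{G}_{\xi,\zeta}$.

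For $(ii)$: I would upgrade the construction to a reflexive tree space in the style of Corollary~\ref{rob}, where each node $t\in T$ carries a biorthogonal pair $(e_t,e_t^*)$ with $e_t^*(e_t)=1$, built via a Tsirelson-type implicit equation designed so that along any infinite chain $(e_t)$ has asymptotic $\ell_1^\xi$ behavior in $X_T$ and $(e_t^*)$ has asymptotic $\ell_1^\zeta$ behavior in $X_T^*$. If $T\in\textbf{IF}$, an infinite branch $(t_n)$ directly witnesses $I_{X_T}\in\complement\mathfrak{M}_{\xi,\zeta}$ via the $\xi$-weakly null $(e_{t_n})\subset X_T$, the $\zeta$-weakly null $(e_{t_n}^*)\subset X_T^*$, and the pairing $e_{t_n}^*(e_{t_n})=1$. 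If $T\in\textbf{WF}$, the finite-chain structure of $X_T$ combined with reflexivity (as in Corollary~\ref{rob}) yields $X_T\in\textbf{\emph{M}}_{\xi,\zeta}$; in fact one can arrange $X_T$ to be Dunford--Pettis, whence $I_{X_T}\in\mathfrak{DP}\subset\mathfrak{M}_{\xi,\zeta}$.

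Main obstacle. The delicate issue is the simultaneous two-sided rank control: $\mathcal{F}_T$ must attain CB rank strictly exceeding $\omega^\zeta$ precisely along infinite branches of $T$ while keeping its global rank $\leqslant\omega^\xi$, and the strict inequality $\zeta<\xi$ in $(i)$ is exactly what reconciles these two constraints. In $(ii)$ the extra wrinkle is symmetrizing so that primal and dual simultaneously carry the correct weakly null structure while the biorthogonal pairing $\langle e_t^*,e_t\rangle=1$ persists along branches; both constructions parallel the asymptotic-$\ell_1^\alpha$ and asymptotic-$c_0^\alpha$ tree spaces already developed for Corollary~\ref{rob}, so the template is available.
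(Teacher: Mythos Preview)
Your plan is substantially more intricate than the paper's, and it has a genuine gap in each part.

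\textbf{The paper's argument.} The paper does not calibrate any Schreier families to the tree $T$. It fixes once and for all a space $Y$ with a normalized bimonotone basis that already fails the relevant property ($Y=X_\zeta$ for $(i)$, $Y=\ell_2$ for $(ii)$), and builds the classical ``$\ell_1$-sum over incomparable segments'' tree space $Z$ on $c_{00}(\mathcal{T})$ modeled on $Y$. For each $T\in\textbf{Tr}$ the subspace $Z(T)$ spanned by $(e_t)_{t\in T\setminus\{\varnothing\}}$ satisfies: if $T$ is well-founded, an easy induction on rank gives that $Z(T)$ has the \emph{Schur property}, hence lies in every $\textsf{G}_{\xi,\zeta}$ and $\textsf{M}_{\xi,\zeta}$ trivially; if $T$ is ill-founded, any infinite branch carries a complemented isometric copy of $Y$, so $Z(T)$ inherits the failure from $Y$. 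Borel-measurability is standard. No rank control, no dual engineering, no separate treatment of $\zeta=0$.

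\textbf{Gaps in your proposal.}
For $(i)$, your well-founded case is the weak point. Your basis $(e_t)$ is by design $\xi$-weakly null, so $X_T$ is certainly not Schur; you then need the much finer statement that \emph{every} weakly null sequence in $X_T$ is $\zeta$-weakly null. A Bessaga--Pe\l czy\'nski pass to blocks does not give this: blocks of a Schreier-type basis over a wide well-founded tree can still be $\ell_1^\zeta+$-spreading models (nothing in your description of $\mathcal{F}_T$ prevents a block sequence supported on many short incomparable chains from behaving like the $X_\zeta$ basis). You flag this yourself for $\zeta=0$, but the difficulty is present for all $\zeta$.

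For $(ii)$, the well-founded case is internally inconsistent: you ask that $X_T$ be reflexive and that ``one can arrange $X_T$ to be Dunford--Pettis''. An infinite-dimensional reflexive space never has the Dunford--Pettis property, so this route is closed. More generally, producing in a Borel fashion a \emph{reflexive} tree space whose primal and dual both satisfy prescribed asymptotic $\ell_1^\alpha$ bounds along branches, while remaining in $\textsf{M}_{\xi,\zeta}$ for every well-founded $T$, is far harder than what is needed here. The paper sidesteps all of this by choosing $Y=\ell_2$ and observing that the incomparable-segment construction gives Schur on well-founded trees.
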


\begin{proof} We prove the result for spaces.  We prove $(i)$ and $(ii)$ simultaneously. First fix any space $Y$ with a normalized, bimonotone basis $(y_i)_{i=1}^\infty$ such that $Y$ lies in $\complement\textsf{G}_{\xi, \zeta}$ (resp. $\complement\textsf{M}_{\xi, \zeta}$).    Note that $X_\zeta\in \complement\textsf{G}_{\xi, \zeta}$, as the canonical basis is $\zeta+1$-weakly null and not $\zeta$-weakly null, and $\ell_2\in \complement \textsf{M}_{1,1}\subset \complement \textsf{M}_{\xi, \zeta}$. So such a $Y$ exists.

Now let $\mathcal{T}$ denote the finite, non-empty sequences of natural numbers and for such a sequence, let $|t|$ denote the length of $t$.  Let us define the relation $\preceq$ on $\mathcal{T}$ by $s\preceq t$ if $s$ is an initial segment of $t$.  We say a subset $\mathfrak{s}\subset \mathcal{T}$ is a \emph{segment} if it is of the form $\mathfrak{s}=\{v: u\preceq v\preceq w\}$ for some $u,w\in \mathcal{T}$.     Let us say two segments $\mathfrak{s}_0, \mathfrak{s}_1$ are \emph{incomparable} if for $j\in \{0,1\}$, no member of $\mathfrak{s}_j$ is an initial segment of any member of $\mathfrak{s}_{1-j}$.        Now let $Z$ denote the completion of $c_{00}(\mathcal{T})$ with respect to the norm $$\|\sum_{t\in \mathcal{T}}a_t e_t\|=\sup\Bigl\{\sum_{i=1}^n \|\sum_{t\in \mathfrak{s}_i} a_t y_{|t|}\|_Y: n\in\nn, \mathfrak{s}_1, \ldots, \mathfrak{s}_n\subset \mathcal{T}\text{\ pairwise incomparable segments}\Bigr\}.$$    For a tree $T$ on $\nn$, let $Z(T)$ denote the closed span in $Z$ of $\{e_t: t\in T\setminus\{\varnothing\}\}$.    Then by an easy induction on the rank of $T$, if $T$ is well-founded, $Z(T)$ has the Schur property and therefore lies in $\textsf{G}_{\xi, \zeta}$ (resp. $\textsf{M}_{\xi, \zeta}$).     If $T$ is ill-founded, $Z(T)$ contains a complemented copy of $Y$, and therefore lies in $\complement \textsf{G}_{\xi, \zeta}$ (resp. $\complement \textsf{M}_{\xi, \zeta}$).    Furthermore, by standard techniques (see, for example, \cite{Bossard}), there exists a map $J:\text{Subs}(Z)\to \textbf{SB}$ mapping the space of closed subspaces of $Z$ into $\textbf{SB}$ such that $T\mapsto J(Z(T))$ is Borel. This is a reduction of $\textbf{WF}$ to $\textbf{G}_{\xi, \zeta}$ (resp. $\textbf{M}_{\xi, \zeta}$).

\end{proof}

\end{document}